\newtheorem{theorem}{Theorem}[section]
\newtheorem{lemma}[theorem]{Lemma}
\newtheorem{remark}[theorem]{Remark}
\newtheorem{definition}[theorem]{Definition}
\newtheorem{example}[theorem]{Example}
\newtheorem{proposition}[theorem]{Proposition}
\newtheorem{corollary}[theorem]{Corollary}
\def\u{\mathfrak{A}}
 \def\s{\{\u_k\}_k}
\def\hbu{H_{b\u}}
\def\hu{H_{\u}}
\def\mbu{M_{b\u}}
\def\p{\mathcal{P}}
\def\v{\overset\vee}
 \def\l{\mathcal{L}}
 \def\bs{\backslash}
 \def\hdu{H_{d\u}}
\begin{document}

\title{On algebras of holomorphic functions of a given type}

\author{Santiago Muro}


\address{Departamento de Matem\'{a}tica - Pab I,
Facultad de Cs. Exactas y Naturales, Universidad de Buenos Aires,
(1428) Buenos Aires, Argentina and CONICET} \email{smuro@dm.uba.ar}


\begin{abstract}
We show that several spaces of holomorphic functions on a Riemann domain over a Banach space, including the nuclear and Hilbert-Schmidt bounded type, are locally $m$-convex Fr\'echet algebras. 
We prove that the spectrum of these algebras has a natural analytic structure, which we use to characterize the envelope of holomorphy. We also show a Cartan-Thullen type theorem.
\end{abstract}

 \keywords{holomorphy types, polynomial ideals, Fr\'echet algebras, Riemann domains}

\maketitle

\section*{Introduction}

Holomorphy types were defined by Nachbin \cite{Nac69} in order to include in a single theory the most commonly used spaces of holomorphic functions on infinite dimensional spaces, like the current, nuclear, compact or Hilbert-Schmidt type.
Since Pietsch' definition of ideals of multilinear forms \cite{Pie84}, the theory of holomorphy types began to interact with the concept of normed ideal of homogeneous polynomials (see for example \cite{Hol86,BraJun90,BotBraJunPel06,CarDimMur09}). We follow this approach in this article incorporating them to the definition of holomorphy type.

A holomorphy type $\u$ at a Banach space $E$ is a sequence $\{\u_n(E)\}$ of normed spaces of $n$-homogeneous polynomials with the property that if a polynomial is in $\u$, then all its differentials also belong to $\u$ and such that their norms have controlled growth. 
In \cite{Har97}, Harris deduced a tight bound for the usual norm of the differentials of a homogeneous polynomial. 
Noticeably, we are able to show that every known (to us) example of
holomorphy type satisfies the same bound (see the examples of the first section).

A holomorphic function on an open set $U$ of $E$ is of type $\u$ if
it has positive $\u$-radius of convergence at each point of $U$
\cite{Nac69, Din71(holomorphy-types), Aro73}. Similarly, entire
functions of bounded $\u$-type are defined as functions that have
infinite $\u$-radius of convergence at zero
\cite{FavJat09,CarDimMur07}, and  it
is immediate to generalize this definition to holomorphic functions
of bounded $\u$-type on a ball. We propose a definition of holomorphic function of bounded $\u$-type on a general open set  $U$ of $E$ and on a Riemann domain spread over $E$, and study some properties of the space $\hbu(U)$ of this class of functions.
When $\u$ is the sequence of all continuous homogeneous polynomials (i.e. for the current holomorphy type) we recover  $H_b(U)$, the space of bounded type holomorphic functions on $U$. The space $H_b(U)$ is a
Fr\'echet algebra with the topology of uniform convergence on $U$-bounded sets. On the other hand, the spaces of nuclear
\cite{Din71(holomorphy-types)} (see also
\cite[Exercise 2.63]{Din99}) and Hilbert-Schmidt \cite{Pet01}
polynomials were proved to be an algebra, and it is easy to deduce that the corresponding spaces of entire
functions of bounded type are also algebras. A more general approach was followed in
\cite{CarDimMur07,CarDimMur}, where \emph{multiplicative} sequences of normed ideals of polynomials were studied. A sequence $\u$ of normed ideals of polynomials is multiplicative if a bound for the norm of the product of two homogeneous polynomials in $\u$ can be obtained in terms of the product of the norms of these polynomials. We showed there
that several spaces of  entire functions of bounded type are algebras
(with continuous multiplication). In Section \ref{section multiplicative}, we prove 
that for every previously mentioned example of holomorphy
type the corresponding space $\hbu(U)$, is actually a locally $m$-convex Fr\'echet algebra with its natural
topology. We also show that algebras $\hbu(E)$ are test algebras for the famous Michael's problem on the continuity of characters.

We study the spectrum of the algebra $\hbu(U)$ and show in Theorem \ref{mbu dom riemann} that, under fairly general conditions on $\u$, it may be endowed with a structure of  Riemann domain over the bidual of $E$, generalizing some of the results in \cite{AroGalGarMae96,CarDimMur}. We show in Section \ref{section holo extensions} that the Gelfand extensions to the spectrum are holomorphic and that the spectrum is a domain of holomorphy with respect to the set of all holomorphic functions, as it was proved in \cite{DinVen04} for the current holomorphy type. 
We also characterize  the $\hbu$-envelope of holomorphy of an open set $U$ as a part of the spectrum of $\hbu(U)$.
The envelope of holomorphy for the space of holomorphic functions of a given type was constructed by Hirschowitz \cite{Hir72}.  He also raised the question whether the holomorphic extensions to the envelope of holomorphy are of the same type, see \cite[p. 289-290]{Hir72}. We investigate this question for the case of holomorphic functions of bounded $\u$-type in the last section of the article. We need to deal with \emph{weakly differentiable} sequences. 
The concept of weak differentiability was  defined in \cite{CarDimMur} and it was proved there that it is, in some sense, dual to multiplicativity (see also Remark \ref{w-dif dual a mult}). The importance of this duality can be seen, for instance, in Example \ref{HS multip}, where we prove that the Hilbert-Schmidt norm of the product of two homogeneous polynomials is bounded by the product of their Hilbert-Schmidt norms, improving a result obtained by Petersson \cite{Pet01}.
When the sequence $\u$ is  weakly differentiable we succeed to show that the extension of a function in $\hbu(U)$ to the $\hbu$-envelope of holomorphy of $U$ is of type $\u$, thus answering in this case the question of Hirschowitz positively. On the other hand, it is known (see \cite[Example 2.8]{CarMur}) that the extension of a bounded type holomorphic function to the $H_b$-envelope of holomorphy may fail to be of bounded type, thus, we cannot expect the extension of every function in $\hbu(U)$ to be of bounded $\u$-type on the $\hbu$-envelope. 
We end the article with a version of the Cartan-Thullen theorem for $\hbu(U)$.


 We refer to \cite{Din99,Muj86} for notation and results regarding
polynomials and holomorphic functions in general and to
\cite{DefFlo93,Flo01,Flo02,FloHun02} for polynomial ideals and tensor
products of Banach spaces.

\section{Preliminaries}\label{section prelim}

Throughout this article $E$ is a complex Banach space and $B_E(x,r)$ denotes the open ball of radius $r$ and center $x$ in $E$. 
We denote by $\p^k(E)$ the Banach space of all continuous
$k$-homogeneous polynomials from $E$ to $\mathbb{C}$.

We define, for each $P\in\p^k(E)$, $a\in E$ and $j\le k$ the polynomial $P_{a^j}\in \p^{k-j}(E)$ by $$
P_{a^j}(x)=\v P(a^j,x^{k-j})=\v
P(\underbrace{a,...,a}_j,\underbrace{x,...,x}_{k-j}),$$ where $\v P$ is the symmetric $k$-linear form associated to $P$. For $j=1$,
we write $P_a$ instead of $P_{a^1}$.

A Riemann domain spread over $E$ is a pair $(X,q)$, where $X$ is a Hausdorff topological space and $q:X\to E$ is a local homeomorphism. For $x\in X$, a ball of radius $r>0$ centered at $x$ is a neighbourhood of $x$ that is homeomorphic to $B_E(q(x),r)$ through $q$. It will be denoted by $B_X(x,r)$. When there is no place for confusion we denote the ball of center $x$ and radius $r$ by $B_r(x)$ (where $x$ can be in $E$ or in $X$). We also define the distance to the border of $X$, which is a function $d_X:X\to \mathbb R_{>0}$ defined by $d_X(x)=\sup\{r>0:\,B_r(x)\textrm{ exists}\}$. For a subset $A$ of $X$, $d_X(A)$ is defined as the infimum of $d_X(x)$ with $x$ in $A$. A subset $A$ of $X$ is called an $X$-bounded subset if $d_X(A)>0$ and $q(A)$ is a bounded set in $E$.

\bigskip

Let us recall the definition of polynomial ideal
\cite{Flo01,Flo02}. 
\begin{definition}\rm
A \textbf{Banach ideal of
(scalar-valued) continuous $k$-homo\-geneous polynomials} is a pair
$(\mathfrak{A}_k,\|\cdot\|_{\mathfrak A_k})$ such that:
\begin{enumerate}
\item[(i)] For every Banach space $E$, $\mathfrak{A}_k(E)=\mathfrak A_k\cap
\mathcal
P^k(E)$ is a linear subspace of $\p^k(E)$ and $\|\cdot\|_{\u_k(E)}$
is a norm on it. Moreover, $(\u_k(E), \|\cdot\|_{\u_k(E)})$ is a
Banach space.

\item[(ii)] If $T\in \l (E_1,E)$ and $P \in \u_k(E)$, then $P\circ T\in
\u_k(E_1)$ with $$ \|
P\circ T\|_{\u_k(E_1)}\le \|P\|_{\u_k(E)} \| T\|^k.$$

\item[(iii)] $z\mapsto z^k$ belongs to $\u_k(\mathbb C)$
and has norm 1.
\end{enumerate} 
\end{definition}

We use the following version of the concept of holomorphy type.
\begin{definition}\label{defihtype}\rm
Consider the sequence $\u=\{\u_k\}_{k=1}^\infty$, where for each
$k$, $\u_k$ is a Banach ideal of  $k$-homogeneous
polynomials. We say that $\{\u_k\}_k$ is a \textbf{holomorphy type} if for each $l<k$ there exist a positive constant $c_{k,l}$
such that for every Banach
space $E$, the following hold:
\begin{equation}\label{htype}
\textrm{if }P\in \u_{k}(E)\textrm{, }a\in E\textrm{ then } P_{a^l}\textrm{ belongs to
}\u_{k-l}(E)\textrm{ and }
\|P_{a^l}\|_{\u_{k-l}(E)} \le c_{k,l}
\|P\|_{\u_{k}(E)} \|a\|^l.
\end{equation}
\end{definition}
\begin{remark}\rm
(a)  The difference between the above definition and the original Nachbin's definition of holomorphy type \cite{Nac69} is twofold. First, Nachbin did not work with polynomial ideals, a concept that was not defined until mid 80's after the work of Pietsch \cite{Pie84}. We think however that polynomial ideals are in the spirit of the concept of holomorphy type. Holomorphy types defined as above are global holomorphy types in the sense of \cite{BotBraJunPel06}.
Second, the constants considered by Nachbin were of the form $c_{k,l}=\binom{k}{l}C^k$ for some fixed constant $C$.
In most of the results we require that the constants satisfy, for every $k,l$,
\begin{equation}\label{constantes}
c_{k,l} \le\frac{(k+l)^{k+l}}{(k+l)!}\frac{k!}{k^k}\frac{l!}{l^l}.
\end{equation}
These constants are more restrictive than Nachbin's constants, but, as we will see bellow, the constants $c_{k,l}$ of every usual example of holomorphy type satisfy (\ref{constantes}).
\end{remark}

\begin{remark}\label{cohe dual}\rm
In \cite{CarDimMur09} we defined and studied \textbf{coherent sequences} of
polynomial ideals.
Any coherent sequence is a holomorphy type. In fact, a coherent sequence is a holomorphy type which satisfies the following extra condition: for each $l,k\in\mathbb N$ there exists a positive constant $d_{k,l}$
such that for every Banach
space $E$,
\begin{equation}\label{dual a fijar}
\textrm{if }P\in \u_{k}(E)\textrm{, }\gamma\in E'\textrm{ then } \gamma^l P\textrm{ belongs to
}\u_{k+l}(E)\textrm{ and }
\|\gamma^lP\|_{\u_{k+l}(E)} \le d_{k,l}
\|P\|_{\u_{k}(E)} \|\gamma\|^l.
\end{equation}
The constants appearing in \cite{CarDimMur09} were of the form $c_{k,l}=C^l$ and $d_{k,l}=D^l$.
The extra condition asked for the coherence is very natural since conditions (\ref{htype}) and (\ref{dual a fijar}) are dual to each other in the following sense: if $\{\u_k\}_{k=1}^\infty$ is a sequence of polynomial ideals which satisfies (\ref{htype}) (respectively (\ref{dual a fijar})) then the sequence of adjoint ideals $\{\u_k^*\}_{k=1}^\infty$ satisfies (\ref{dual a fijar}) (respectively (\ref{htype})) with the same constants (see \cite[Proposition 5.1]{CarDimMur09}). Thus we may think a coherent sequence as a sequence of polynomial ideals which form a holomorphy type and whose adjoints ideals are also a holomorphy type.
\end{remark}

We present now some examples of holomorphy types with constants as in (\ref{constantes}).

\begin{example}
The sequence $\p$ of ideals of continuous polynomials \rm is, by \cite[Corollary 4]{Har97}, a holomorphy type  with constants as in (\ref{constantes}). The same holds for other sequences of closed ideals as the sequence $\p_{w}$ of weakly continuous on bounded sets polynomials, and the sequence $\p_A$ of approximable polynomials.
\end{example}
Slight modifications on the results of \cite[Corollaries 5.2 and 5.6]{CarDimMur09} (see also \cite[Section 3.1.4]{Mur10}) show that if a sequence $\{\u_k\}_{k=1}^\infty$ of ideals form a holomorphy type with constants $c_{k,l}$ the sequence of maximal ideals $\{\u_k^{\max}\}_{k=1}^\infty$ and the sequence of minimal ideals $\{\u_k^{\min}\}_{k=1}^\infty$ are also holomorphy types with the same constants $c_{k,l}$. Also, as already mentioned in Remark \ref{cohe dual}, if the sequence $\{\u_k\}_{k=1}^\infty$ satisfies the condition of coherence (\ref{dual a fijar}) with constants $d_{k,l}$ then the sequence of adjoint ideals  $\{\u_k^*\}_{k=1}^\infty$  is a holomorphy type with constants $d_{k,l}$. As a consequence we have the following.
\begin{example}
The sequence $\p_I$ of ideals of integral polynomials \rm  is a holomorphy type with constants $c_{k,l}=1$. Indeed, since condition (\ref{dual a fijar})
is trivially satisfied by the sequence $\p$ with constants $d_{k,l}=1$ and since $(\p^k)^*=\p_I^k$, the result follows from the above comments.
\end{example}
\begin{example}
The sequence $\p_N$ of ideals of nuclear polynomials \rm is a holomorphy type with constants $c_{k,l}=1$ because $\p^k_N=(\p^k_I)^{\min}$.
\end{example}
\begin{example} Sequences of polynomial ideals associated to a sequence of natural symmetric tensor norms. \rm
For a symmetric tensor norm $\beta_k$ (of order $k$), the projective
and injective hulls of $\beta_k$ (denoted
 as $\bs \beta_k /$ and $/
\beta_k \bs$ respectively) are defined as the tensor norms
induced by the following mappings (see \cite{CarGal-4}):
$$ \big( \otimes^{k,s} \ell_1(B_E),  \beta_k  \big) \overset 1 \twoheadrightarrow \big( \otimes^{k,s}  E,   \bs \beta_k /  \big).$$
$$ \big( \otimes^{k,s} E, / \beta_k \bs \big) \overset 1 \hookrightarrow  \big( \otimes^{k,s} \ell_{\infty}(B_{{E}'}),  \beta_k \big).$$

In \cite{CarGal-4}, natural tensor norms for arbitrary order are
introduced and studied, in the spirit of the natural tensor norms of
Grothendieck.
A finitely generated symmetric tensor norm of order
$k$, $\beta_k$ is \textit{natural} if $\beta_k$ is obtained from $\varepsilon_k$ (the injective symmetric tensor norm) and $\pi_k$ (the projective symmetric tensor norms) taking a finite number of projective and injective hulls (see \cite{CarGal-4} for details).
 For $k\ge 3$, it is
shown in \cite{CarGal-4} that there are exactly six non-equivalent
natural tensor norms and for $k=2$ there are only four.

Let $\u_k$ be an ideal of $k$-homogeneous polynomials associated to a finitely generated symmetric tensor norm $\alpha_k$. Small variations in Lemma 3.1.34 of  \cite{Mur10} show that if $\{\u_k\}$ is a holomorphy type with constants $c_{k,l}$ then so are the sequences of maximal  (or minimal) ideals associated to the projective and injective hulls of $\alpha_k$. In particular, any of the sequences of maximal  (or minimal) ideals associated to any of the sequences of natural norms is a holomorphy type with constants as in (\ref{constantes}). 
\end{example}

\begin{example}The sequence $\p_e$ of ideals of extendible polynomials. \rm
 Since the ideal of extendible polynomials $\p_e^k$ is the maximal ideal associated to the tensor norm $\bs\varepsilon_k/$, we have by the previous example that the sequence $\p_e$ is a holomorphy type with constants as in (\ref{constantes}).
\end{example}

\begin{example} The sequence $\mathcal M_r$ of ideals of multiple $r$-summing polynomials. \rm
  It was shown in \cite[Example 1.13]{CarDimMur09} that it is a coherent sequence with constants equal to 1 thus, in particular, it is a holomorphy type with constants $c_{k,l}=1$. 
\end{example}

\begin{example} The sequence $\mathcal S_2$ of ideals of Hilbert-Schmidt polynomials. \rm
It was shown in \cite[Proposition 3]{Dwy71} that it is a holomorphy type with constants $c_{k,l}=1$.
\end{example}
\begin{example} The sequence $\mathcal S_p$ of $p$-Schatten-von Neumann polynomials. \rm
Let $H$ be a Hilbert space. Recall that for $1< p<\infty$, the $p$-Schatten-von Neumann $k$-homogeneous polynomials on $H$ may be defined, using the complex interpolation method,  interpolating  nuclear and approximable polynomials on $H$ \cite{CobKuhPee92,CarDimMur07} as follows:
$$
\mathcal S^k_p(H):=[\p^k_N(H),\p^k_A(H)]_{\theta},
$$
where $p(1-\theta)=1$. The space of Hilbert-Schmidt polynomials coincide (isometrically) with the space of $2$-Schatten-von Neumann polynomials.
Since interpolation of holomorphy types is a holomorphy type (see \cite[Proposition 1.2]{CarDimMur}), we can conclude that $\{\mathcal S_p^k\}$ is a holomorphy type with constants $$c_{k,l} \le\Big(\frac{(k+l)^{k+l}}{(k+l)!}\frac{k!}{k^k}\frac{l!}{l^l}\Big)^{1-\frac1{p}}. $$
Moreover, using the Reiteration Theorem \cite[4.6.1]{BerLof76}, we have that, for $1<p<2$,  $\mathcal S^k_p(H):=[\p^k_N(H),\mathcal S^k_2(H)]_{2\theta}$, with $p(1-\theta)=1$. Thus, for $1<p<2$, we can obtain $c_{k,l}=1$. Similarly, for $2<p<\infty$ we have $$c_{k,l} \le\Big(\frac{(k+l)^{k+l}}{(k+l)!}\frac{k!}{k^k}\frac{l!}{l^l}\Big)^{1-\frac2{p}}. $$
\end{example}

\bigskip

\section{Holomorphic functions of $\u$-bounded type}\label{section hbu}

\bigskip
There is a natural way to associate to a holomorphy type $\u$ a class of holomorphic functions  on a Riemann domain $(X,q)$ spread over a  Banach space $E$. This space, denoted by $\hu(X)$, consists on all holomorphic functions that have positive $\u$-radius of convergence at each point of $X$, see for example \cite[Definition 2]{Din71(holomorphy-types)}. To give the precise definition, let us recall that if $f$ is a holomorphic function on $X$, then its $k$-th differential is defined by
$$\frac{d^kf(x)}{k!}:=\frac{d^k[f\circ(q|_{B_s(x)})^{-1}]}{k!}\big(q(x)\big).$$
\begin{definition}\rm
Let $\u=\{\u_k\}_k$ be a holomorphy type; $E$ a Banach space, and $(X,q)$ a Riemann domain  spread over a  Banach space $E$. A holomorphic function $f$ is of type $\u$ on $X$ if for each $x\in X,\, d^kf(x)$ belongs to  $\u_k(E)$ and
\begin{equation*}\label{hu}
 \lim_{k\rightarrow \infty}
 \Big\|\frac{d^kf(x)}{k!}\Big\|_{\u_k(E)}^{1/k}<\infty.
\end{equation*}
We denote by $\hu(X)$ the space of type $\u$ funtions on $X$.
 \end{definition}

We may also define a space of entire functions of bounded $\u$-type \cite{CarDimMur07,FavJat09} as the set of entire functions with infinite $\u$-radius of convergence at zero (and hence at every point). Similarly we can define the holomorphic functions of $\u$-bounded type on a ball of radius $r$ as
the holomorphic functions which have $\u$-radius of convergence equal $r$.
\begin{definition}\rm
Let $\u=\{\u_k\}_k$ be a holomorphy type; $E$ a Banach space, $x\in E$, and $r>0$.  We
define the space of \textbf{holomorphic functions of $\u$-bounded type} on $B_r(x)$ by
$$
\hbu(B_r(x))=\left\{f\in H(B_r(x))\ : \frac{d^kf(x)}{k!}
\in \u_k(E) \textrm{ and } \limsup_{k\rightarrow \infty}
\Big\|\frac{d^kf(x)}{k!}\Big\|_{\u_k}^{1/k}\le \frac1{r} \right\}.
$$
\end{definition}

We consider in $\hbu(B_r(x))$ the seminorms $p_s$, for
$0<s<r$, given by
$$
p_s(f)=\sum_{k=0}^{\infty}
\Big\|\frac{d^kf(x)}{k!}\Big\|_{\u_k}s^k,
$$
for all $f\in \hbu(B_r(x))$. Then it easy to show that $\Big(\hbu(B_r(x),F),\{p_s\}_{0<s<r}\Big)$ is a Fr\'echet space.

The following examples of spaces of holomorphic functions of bounded type on the unit ball $B_E$ were
already defined in the literature and can be seen as particular cases of the
above definition.

\begin{example} \rm
\begin{enumerate}
\item[(a)] If $\u$ is the sequence of ideals of continuous homogeneous polynomials,
then $\hbu(B_E)=H_b(B_E)$.
\item[(b)] If $\u$ is the sequence of ideals of weakly continuous on bounded sets
polynomials, then it is not difficult to see that  $\hbu(B_E)$ is the space $H_{wu}(B_E)$ of weakly uniformly continuous holomorphic functions on $B_E$-bounded sets. 
\item[(c)] If $\u$ is the sequence of ideals of nuclear polynomials then $\hbu(B_E)$
is the space of holomorphic functions of nuclear bounded type
$H_{Nb}(B_E)$ defined by Gupta and Nachbin (see also \cite{Mat78}).
\item[(d)] If $\u$ is the sequence of ideals of extendible polynomials,
then by~\cite[Proposition
14]{Car01},  $\hbu(B_E)$ is the space of all $f\in H(B_E)$ such that,
for any Banach space $G\supset E $, there is an extension $ \tilde
f\in H_b(B_G)$ of~$f$.
\item[(e)] If $\u$ is the sequence of ideals of integral polynomials, then $\hbu(B_E)$ is the space of
integral holomorphic functions of bounded type $H_{bI}(B_E)$ defined
in \cite{DimGalMaeZal04}.
\end{enumerate}
\end{example}

\begin{remark}\rm In general, we have that $\hbu\subsetneq\hu\cap H_b$. Indeed, 
Dineen found in \cite[Example 9]{Din71(holomorphy-types)}  an entire function of bounded type on a Hilbert space $E$, $f\in H_b(E)$, such that  $f$ is of nuclear type on $E$, $f\in H_{N}(E)$, but $f$ is not an entire function of nuclear bounded type because $\lim_{n\to\infty}\|\frac{d^nf(0)}{n!}\|^{\frac1{n}}_N=1$. 
\end{remark}

We now define holomorphic functions of $\u$-bounded type on a Riemann domain $(X,q)$ spread over a Banach space.
If $f$ is of type $\u$ on $X$, and it has $\u$-radius of convergence greater than $s>0$ at $x\in X$, then we define
$$
p_s^x(f)=\sum_{k=0}^{\infty}
s^k\Big\|\frac{d^kf(x)}{k!}\Big\|_{\u_k(E)}.
$$
The holomorphic functions of $\u$-bounded type on $X$ are the holomorphic functions
on $X$ which are of the class $\hbu$ on every ball contained in $X$ and,
on each open $X$-bounded  set $A$, the seminorms
$p_s^x$ (with $x\in A$ and $B_s(x)\subset A$) are uniformly bounded.
\begin{definition}\label{def hbu(X)}\rm
A holomorphic function $f$ is of \textbf{$\u$-bounded type} on $(X,q)$ if:

(i) $f\circ(q|_{B_s(x)})^{-1}\in\hbu(q(B_s(x)))$ for every $s\le d_X(x)$.

(ii) For each open $X$-bounded  set $A$,
\begin{equation*}
 p_A(f):=\sup\{p_s^x(f):\,B_s(x)\subset A\}<\infty.
\end{equation*}
We denote by $\hbu(X)$ the space of all holomorphic functions of $\u$-bounded type on $(X,q)$.
\end{definition}
When $\u$ is the sequence $\p$ of ideals of continuous homogeneous polynomials, by the Cauchy inequalities, $\hbu(X)=H_b(X)$.
If $\u$ is the sequence of ideals of weakly continuous on bounded sets polynomials and $U$ is a balanced open set then $\hbu(U)=H_{wu}(U)$ (see, for example \cite[Proposition 1.1]{BurMor00}).
\begin{remark}\rm
Condition (i) in above definition states that $f$ is a holomorphic function of $\u$-bounded type on each ball contained in $X$. The space of holomorphic functions on $X$ that satisfy this condition is denoted by $\hdu(X)$
in \cite[Section 3.2.6]{Mur10}. This resembles much the definition given in
\cite[Section 3]{DinVen04} of the space $H_d(X)$ (indeed $\hdu(X)=H_d(X)$ when $\u=\p$).
The seminorms $\{p_s^x:\,0<s<d_X(x),\, x\in X\}$ define a topology on $\hdu(X)$ which is always complete but not necessarily a Fr\'echet space topology unless $E$
is separable. In that case we may follow the proof of \cite[Proposition
3.2]{DinVen04} to show that $H_{d\u}(X)$ is a Fr\'echet space. Most of the results in this article remain true if we replace $\hbu$ by $\hdu$.
\end{remark}


\begin{proposition}\label{hbudeX frechet}
The seminorms $\{p_A:\, A\textrm{ open and }X\textrm{-bounded}\}$ define a
Fr\'echet space topology on $\hbu(X)$.
\end{proposition}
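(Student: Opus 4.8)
The plan is to verify the three standard requirements for a Fréchet space: (1) the topology is Hausdorff; (2) it is generated by a countable family of seminorms; and (3) it is complete. Throughout, write $H := \hbu(X)$ with the seminorms $\{p_A : A \text{ open, } X\text{-bounded}\}$.

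First I would address the countability. For a general Riemann domain $(X,q)$ the family of all open $X$-bounded sets need not be countable, but it suffices to find a countable cofinal subfamily, i.e. a sequence $A_1 \subset A_2 \subset \cdots$ of open $X$-bounded sets such that every open $X$-bounded $A$ is contained in some $A_n$, and then check that $p_A \le C\, p_{A_n}$ whenever $A \subset A_n$ (the latter being immediate from the definition of $p_A$ as a supremum over balls, since $B_s(x) \subset A \subset A_n$). To build such a sequence, I would fix a point $x_0 \in X$ (assume $X$ connected; otherwise argue componentwise, or note the argument localizes) and set, for instance, $A_n = \{x \in X : \text{there is a polygonal chain of balls in } X \text{ from } x_0 \text{ to } x \text{ of length} \le n,\ \|q(x)\| < n,\ d_X(x) > 1/n\}$, or more simply exhaust $X$ by the $X$-bounded sets appearing in a standard exhaustion (cf. the exhaustions used for $H_b(X)$ in \cite{DinVen04,Muj86}). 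Each $A_n$ is open and $X$-bounded, and any $X$-bounded $A$ — having $d_X(A) > 0$ and $q(A)$ bounded, and being contained in finitely many chains from $x_0$ if $X$ is connected — lies in some $A_n$.

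Next, Hausdorffness: if $p_A(f) = 0$ for all $X$-bounded $A$, then in particular $p_s^x(f) = 0$ for every ball $B_s(x) \subset X$, which forces all Taylor coefficients $\frac{d^k f(x)}{k!}$ to vanish at every $x$, hence $f \equiv 0$ by the identity principle for holomorphic functions on $X$. This is routine.

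The main work is completeness, and this is the step I expect to be the principal obstacle. Given a Cauchy sequence $(f_j)$ in $H$, for each fixed ball $B_s(x) \subset X$ the estimate $p_s^x(f_j - f_i) \le p_A(f_j - f_i)$ for any $X$-bounded $A \supset B_s(x)$ shows $(f_j)$ is Cauchy in $\hbu(q(B_s(x)))$, which is a Fréchet space by the remark following the definition of $\hbu(B_r(x))$; so $f_j$ converges on each such ball, and these limits patch (uniqueness of limits on overlaps, using the identity principle) to a holomorphic $f$ on $X$ with $\frac{d^k f(x)}{k!} = \lim_j \frac{d^k f_j(x)}{k!}$ in $\u_k(E)$ for every $x,k$. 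It remains to check $f \in \hbu(X)$, i.e. conditions (i) and (ii) of Definition \ref{def hbu(X)}: condition (i) follows from the ball-wise convergence just established. For (ii), fix an open $X$-bounded $A$; given $\varepsilon > 0$ choose $N$ with $p_A(f_j - f_i) < \varepsilon$ for $i,j \ge N$, so for every ball $B_s(x) \subset A$ and every finite partial sum, $\sum_{k=0}^{m} s^k \|\frac{d^k f_j(x)}{k!} - \frac{d^k f_i(x)}{k!}\|_{\u_k(E)} < \varepsilon$; letting $i \to \infty$ (coefficient-wise convergence in $\u_k(E)$) and then $m \to \infty$ gives $p_s^x(f_j - f) \le \varepsilon$ for $j \ge N$, uniformly in the ball, hence $p_A(f_j - f) \le \varepsilon$; in particular $p_A(f) \le p_A(f_N) + \varepsilon < \infty$, so $f \in \hbu(X)$ and $f_j \to f$ in $H$. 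The only genuinely delicate point is the interchange of limits (in $j$, in $k$, and in the supremum over balls), which is handled exactly as above by working with finite partial sums first and using that each $p_A$ is a supremum of an increasing family of the $p_s^x$ restricted to partial Taylor series; I would spell this out carefully but it is otherwise routine.
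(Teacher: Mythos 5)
Your proposal is correct and follows essentially the same route as the paper: reduce to the countable family of seminorms $p_{X_n}$ with $X_n=\{x\in X:\|q(x)\|<n,\ d_X(x)>\tfrac1n\}$, use completeness of $\hbu$ on balls to get condition (i) for the limit, and verify condition (ii) by a Fatou-type passage to the limit in the partial sums of the $p_s^x$. The only cosmetic difference is that the paper first obtains the limit function globally via completeness of $H_b(X)$ rather than by patching the ball-wise limits, and your worry about connectedness in the exhaustion step is unnecessary since any open $X$-bounded $A$ lies in some $X_n$ directly.
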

\begin{proof}
It is clear that the topology may be described with the countable set of
seminorms $\{p_{X_n}\}_{n\in\mathbb N}$, where $X_n=\{x\in X:\|q(x)\|<
n\textrm{ and }d_X(x)>\frac1{n}\}$, so we only need to prove completeness. Let
$(f_k)_k$ be a Cauchy sequence in $\hbu(X)$, then it is a Cauchy sequence in
$H_b(X)$, so there exists a function $f\in H_b(X)$ which is limit (uniformly
in $X$-bounded sets) of the $f_k$'s. 

Let $x\in X$ and
$r\le d_X(x)$. Then $(f_k\circ (q|_{B_r(x)})^{-1})_k$ is a Cauchy sequence in
$\hbu(B_r(q(x)))$ which converges pointwise to $f\circ (q|_{B_r(x)})^{-1}$. Since
$\hbu(B_r(q(x)))$ is complete we have that $f\circ (q|_{B_r(x)})^{-1}$
belongs to $\hbu(B_r(q(x)))$, and thus $f$ satisfies $(i)$ of Definition \ref{def hbu(X)}. Moreover, for each $k$, $p_s^x(f-f_k)\le\limsup_jp_s^x(f_j-f_k)$ for every $s<d_X(x)$. Thus, if $A$ is an $X$-bounded set, 
$$
p_A(f-f_k)=\sup_{B_s(x)\subset A}p_s^x(f_j-f_k)\le \limsup_j\sup_{B_s(x)\subset A}p_s^x(f_j-f_k)=\limsup_jp_A(f_j-f_k),
$$
which goes to $0$ as $k\to\infty$. Therefore, $f$ is in $\hbu(X)$ and $(f_k)$ converges to $f$ in $\hbu(X)$.
\end{proof}

\section{Multiplicative sequences}\label{section multiplicative}

In this section we show that under a condition on $\u$ which is satisfied for most of the commonly used polynomial ideals, the space $\hbu(X)$ is a locally $m$-convex Fr\'echet algebra.

\begin{definition}\rm
Let $\s$ be a sequence of  polynomial ideals. We say that $\s$ is {\bf multiplicative} at $E$ if  there exist
constants $c_{k,l}>0$ such that for
each $P\in\u_k(E)$ and $Q\in\u_l(E)$, we have that
$PQ\in\u_{k+l}(E)$ and
$$
\|PQ\|_{\u_{k+l}(E)}\le c_{k,l}\|P\|_{\u_k(E)}\|Q\|_{\u_l(E)}.
$$
\end{definition}
If $\{\u_k\}$ is a multiplicative sequence then the sequence of adjoint ideals $\{\u^*_k\}$ is a holomorphy type with the same constants, and it is moreover a weakly differentiable sequence (see Remark \ref{w-dif dual a mult}).

In \cite{CarDimMur} we studied multiplicative sequences with constants $c_{k,l}\le M^{k+l}$ for some constant $M\ge1$ and proved that in this case the space of entire functions of $\u$-bounded type is an algebra. 
 To obtain algebras of holomorphic functions on balls or on Riemann domains we need to have more restrictive bounds for $c_{k,l}$.
Actually, we impose the constants $c_{k,l}$ to satisfy the inequality (\ref{constantes}) for every $k,l\in\mathbb N$.
\begin{remark}\rm \label{stirling} 
Stirling's Formula states that $e^{-1}n^{n+1/2}\le e^{n-1}n!\le
n^{n+1/2}$ for every $n\ge1$, so we have that
\begin{equation}\label{eq stirling}
\frac{(k+l)^{k+l}}{(k+l)!}\frac{k!}{k^k}\frac{l!}{l^l}\le e^2
\Big(\frac{kl}{k+l}\Big)^{1/2}.
\end{equation}
As a consequence, if $\u$ is multiplicative with $c_{k,l}$ as in (\ref{constantes}) then, for each $\varepsilon>0$ there exists a
constant $c_\varepsilon>0$ such that for every $k,l\in\mathbb N$, $P\in\u_k(E)$ and $Q\in\u_l(E)$, we have,
$$\|PQ\|_{\u_{k+l}(E)}\le c_\varepsilon
(1+\varepsilon)^{k+l}\|P\|_{\u_k(E)}\|Q\|_{\u_l(E)}.$$
\end{remark}
We will show bellow that every example of holomorphy type mentioned in Section \ref{section prelim}
is a
multiplicative sequence with constants that satisfy (\ref{constantes}).
Let us see before that in this case $\hbu(X)$ is a locally $m$-convex Fr\'echet algebra, that is, the topology may be given by a sequence of submultiplicative
seminorms. By a theorem by Mitiagin, Rolewicz and Zelazko \cite{MitRolZel62}, it suffices to show that they are (commutative) $B_0$-algebras\footnote{Recall that a $B_0$-algebra is a complete metrizable topological algebra such that the topology is given by means of an increasing sequence $\|\cdot\|_1\le\|\cdot\|_2\le\dots$ of seminorms satisfying that $\|xy\|_j\le C_i\|x\|_{j+1}\|y\|_{j+1}$ for every $x,y$ in the algebra and every $j\ge1$, where $C_i$ are positive constants. It is possible to make  $C_i=1$ for all $i$ \cite{Zel94}.} 
and that functions in $H(\mathbb C)$ operate on $\hbu$ (that is, if $g(z)=\sum_k a_kz^k$ belongs to $H(\mathbb C)$ and $f\in\hbu$, then $\sum_k a_kf^k$ belongs to $\hbu$). We consider first the case of a ball and the whole space.
\begin{proposition}\label{hbu de la bola algebra}
Suppose that $\u$ is multiplicative with $c_{k,l}$ as in (\ref{constantes}), and $E$ a Banach space. Then,
\begin{itemize}
  \item[$(i)$]  for each $x\in E$ and $r>0$, $H_{b\u}(B_r(x))$ is a locally
$m$-convex Fr\'echet algebra.
  \item[$(ii)$] $H_{b\u}(E)$ is a locally $m$-convex Fr\'echet algebra.
\end{itemize}
\end{proposition}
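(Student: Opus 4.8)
The plan is to reduce everything to the multiplicative estimate on the Taylor coefficients and then verify the three ingredients that, by the Mitiagin--Rolewicz--Zelazko theorem, give local $m$-convexity: completeness/metrizability (already available from the Fréchet space structure), a $B_0$-algebra estimate on the natural seminorms, and the operation of entire functions. For part $(i)$, fix $x\in E$ and $r>0$ and work with the seminorms $p_s$, $0<s<r$. Given $f,g\in\hbu(B_r(x))$, the key computation is to expand $d^m(fg)(x)/m!=\sum_{k+l=m} \frac{d^kf(x)}{k!}\,\frac{d^lg(x)}{l!}$ (Leibniz rule for the homogeneous expansion of a product), apply the multiplicativity of $\u$ to each summand so that $\|\frac{d^m(fg)(x)}{m!}\|_{\u_m}\le \sum_{k+l=m} c_{k,l}\|\frac{d^kf(x)}{k!}\|_{\u_k}\|\frac{d^lg(x)}{l!}\|_{\u_l}$, and then invoke Remark \ref{stirling}: for any $\varepsilon>0$ there is $c_\varepsilon$ with $c_{k,l}\le c_\varepsilon(1+\varepsilon)^{k+l}$. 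Multiplying by $s^m$ and summing over $m$ yields
\[
p_s(fg)\le c_\varepsilon\sum_{m}\sum_{k+l=m}\big\|\tfrac{d^kf(x)}{k!}\big\|_{\u_k}\big((1+\varepsilon)s\big)^k\big\|\tfrac{d^lg(x)}{l!}\big\|_{\u_l}\big((1+\varepsilon)s\big)^l = c_\varepsilon\, p_{(1+\varepsilon)s}(f)\, p_{(1+\varepsilon)s}(g).
\]
Choosing, for a given $s<r$, some $\varepsilon>0$ small enough that $(1+\varepsilon)s<r$ still holds, this is exactly a $B_0$-algebra inequality relating $p_s$ to the larger seminorm $p_{(1+\varepsilon)s}$; since the topology is generated by any sequence $s_n\uparrow r$, we get the required increasing family with submultiplicative-type control. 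In particular $\hbu(B_r(x))$ is closed under products and the multiplication is continuous.

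For the operation of entire functions, let $g(z)=\sum_k a_k z^k\in H(\mathbb C)$ and $f\in\hbu(B_r(x))$. Using the $B_0$-estimate iteratively, $p_s(f^k)\le c_\varepsilon^{\,k-1} p_{(1+\varepsilon)s}(f)^k$; more carefully, one should route through a single larger radius: fix $s<t<r$, pick $\varepsilon$ with $(1+\varepsilon)s\le t$, and obtain $p_s(f^k)\le C\,(c_\varepsilon\,p_t(f))^{k}$ for a constant $C$ independent of $k$ (absorbing the finitely lost factor). Then $\sum_k |a_k|\,p_s(f^k)\le C\sum_k|a_k|(c_\varepsilon p_t(f))^k<\infty$ because $g$ is entire, so $\sum_k a_k f^k$ converges absolutely in the Fréchet space $\hbu(B_r(x))$; its sum is the holomorphic function $g\circ f$, and membership in $\hbu$ follows from completeness. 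Together with the $B_0$ property this gives, by \cite{MitRolZel62} (and the normalization in \cite{Zel94}), that $\hbu(B_r(x))$ is locally $m$-convex, proving $(i)$.

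Part $(ii)$ is the same argument with $r=\infty$: the space $\hbu(E)$ carries the seminorms $p_s$, $s>0$, the Leibniz expansion and Remark \ref{stirling} give $p_s(fg)\le c_\varepsilon\,p_{(1+\varepsilon)s}(f)\,p_{(1+\varepsilon)s}(g)$ for every $s$ and every $\varepsilon>0$, so here one even has the freedom to take $\varepsilon$ as large as desired; the $B_0$-algebra estimate and the operation of $H(\mathbb C)$ go through verbatim, and Mitiagin--Rolewicz--Zelazko again yields local $m$-convexity. The only genuinely delicate point is bookkeeping with the constants: one must be sure that the passage $c_{k,l}\le c_\varepsilon(1+\varepsilon)^{k+l}$ is uniform in $k,l$ (which is exactly the content of \eqref{eq stirling} in Remark \ref{stirling}) and that, when iterating for $f^k$, the loss of a bounded radius factor does not accumulate with $k$ — handled by fixing the two radii $s<t$ once and for all before iterating. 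Everything else (that $fg$ and $g\circ f$ are holomorphic, that the Taylor coefficients at $x$ multiply by Leibniz, completeness) is either classical or already established in Proposition \ref{hbudeX frechet} and the preceding discussion.
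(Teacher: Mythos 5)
Your overall strategy is the paper's: expand the Taylor coefficients of $fg$ by the Cauchy/Leibniz product, apply multiplicativity with the Stirling bound of Remark \ref{stirling}, conclude the $B_0$-algebra property, check that entire functions operate, and invoke Mitiagin--Rolewicz--Zelazko. The $B_0$-algebra estimate you derive, $p_s(fg)\le c_\varepsilon\,p_{(1+\varepsilon)s}(f)\,p_{(1+\varepsilon)s}(g)$, is correct and suffices for that part.

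There is, however, a genuine gap in the step where entire functions operate, and it comes precisely from the symmetry of your estimate. Iterating $p_s(fg)\le c_\varepsilon\,p_{(1+\varepsilon)s}(f)\,p_{(1+\varepsilon)s}(g)$ on $f^k=f\cdot f^{k-1}$ bumps the radius on \emph{both} factors at each step, so what you actually obtain is $p_s(f^k)\le c_\varepsilon^{k-1}\,p_{(1+\varepsilon)^{k-1}s}(f)^k$, not $c_\varepsilon^{k-1}p_{(1+\varepsilon)s}(f)^k$. The radius loss is not a ``finitely lost factor'': it compounds multiplicatively $k$ times, so $(1+\varepsilon)^{k-1}s$ eventually exceeds $r$ in part $(i)$, and even in part $(ii)$ (where all $p_t$ are finite) the bound $p_{(1+\varepsilon)^{k-1}s}(f)^k$ can grow like $e^{ck^2}$ (take $f$ a norm-one linear functional), which defeats the coefficient decay of any entire function. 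Shrinking $\varepsilon$ along the iteration does not help either, because $c_\varepsilon\sim\varepsilon^{-1/2}$ blows up and the product of constants becomes superexponential. The fix is the paper's \emph{asymmetric} estimate: in the double sum, bound the Stirling factor $\bigl(\tfrac{k(n-k)}{n}\bigr)^{1/2}$ by $\sqrt{k}$, a function of the degree of one factor only, and absorb $\sqrt{k}\le c(1+\varepsilon)^k$ into that single factor. This yields $p_s(fg)\le c\,p_s(g)\,p_{s+\varepsilon}(f)$, which iterates with only two radii: $p_s(f^k)\le c^{k-1}p_{s+\varepsilon}(f)^{k-1}p_s(f)\le c^k p_{s+\varepsilon}(f)^k$, and then $\sum_k|a_k|p_s(f^k)<\infty$ follows. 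With that replacement your argument goes through; as written, the key inequality $p_s(f^k)\le C(c_\varepsilon p_t(f))^k$ is asserted but not proved, and it does not follow from the symmetric estimate you established.
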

\begin{proof}
We just prove $(i)$ because $(ii)$ follows similarly. We will show this for $r=1$ and $x=0$, that is for $B_r(x)=B_E$. The general
case follows by translation and dilation. We already know that $H_{b\u}(B_E)$ is
a Fr\'echet space. Let us first show that it is a $B_0$-algebra.

Let $f=\sum_k P_k$ and $g=\sum_k Q_k$ be functions in $H_{b\u}(B_E)$. We must
show that $\frac{d^nfg(0)}{n!}$ belongs to $\u_n(E)$ and that
$p_s(fg)=\sum_{n=0}^\infty s^n\big\|\frac{d^nfg(0)}{n!}\big\|_{\u_n(E)}<\infty$
for every $s<1$. Since $\frac{d^nfg(0)}{n!}=\sum_{k=0}^nP_kQ_{n-k}$ and $\u$ is
multiplicative, $\frac{d^nfg(0)}{n!}$ belongs to $\u_n(E)$. On the other
hand, by (\ref{eq stirling}),
\begin{eqnarray*}
\sum_{n=0}^\infty s^n\big\|\frac{d^nfg(0)}{n!}\big\|_{\u_n(E)} & \le &
e^2\sum_{n=0}^\infty s^n\sum_{k=0}^n
\Big(\frac{k(n-k)}{n}\Big)^{1/2}\|P_k\|_{\u_k(E)}\|Q_{n-k}\|_{\u_{n-k}(E)} \\
& = & e^2 \sum_{k=0}^\infty \sqrt{k}s^k\|P_k\|_{\u_k(E)}\sum_{n=k}^\infty
s^{n-k} \Big(\frac{n-k}{n}\Big)^{1/2}\|Q_{n-k}\|_{\u_{n-k}(E)} \\
& \le & e^2p_s(g)\sum_{k=0}^\infty \sqrt{k}s^k\|P_k\|_{\u_k(E)}.
\end{eqnarray*}
Therefore, for each $\varepsilon>0$ there exists a constant
$c=c(\varepsilon,s)>1$ such that
\begin{equation}\label{seminormas hyper}
 p_s(fg)=\sum s^n\big\|\frac{d^nfg(0)}{n!}\big\|_{\u_n(E)}  \le c
p_s(g)p_{s+\varepsilon}(f).
\end{equation}
Define, for each $n\ge1$, $s_n=1-\frac1{2^n}$ and
$c_n=c(\frac1{2^{n+1}},1-\frac1{2^n})$. Then, for every $f,g\in\hbu(B_E)$, we have
\begin{equation}\label{hbudeu b0 alg}
p_{s_n}(fg)\le c_n p_{s_n}(g)p_{s_{n+1}}(f)\le c_np_{s_{n+1}}(f)p_{s_{n+1}}(g).
\end{equation}
Since the seminorms $p_{s_n}$ determine the topology of $\hbu(B_E)$, we conclude that $\hbu(B_E)$ is a $B_0$-algebra. 
 Note also that 
(\ref{hbudeu b0 alg}) implies that $p_{s_n}(f^k)\le c_n^{k-1}
p_{s_{n+1}}(f)^{k-1} p_{s_n}(f)\le c_n^{k} p_{s_{n+1}}(f)^{k}.$ Take now an
entire function $h\in H(\mathbb C)$, $h(z)=\sum a_kz^kz$. Then for
$f\in\hbu(B_E)$,
$$
p_{s_n}\big(\sum_{k=N}^Ma_kf^k\big) \le \sum_{k=N}^Ma_kp_{s_n}(f^k) \le
\sum_{k=N}^Ma_k\big(c_n p_{s_{n+1}}(f)\big)^{k},
$$
which tends to 0 as $N,M$ increase because $h$ is an entire function. This means
that entire functions operate in $\hbu(B_E)$.
Therefore \cite[Theorem 1]{MitRolZel62} implies that $\hbu(B_E)$ is locally
$m$-convex.
\end{proof}

\begin{remark}\rm
 Michael's conjecture \cite{Mic52} states that on any Fr\'echet algebra, every character is continuous. Adapting some of the ideas in \cite{Cra71,Cla75,Sch81}, Mujica showed (see \cite[Section 33]{Muj86} or \cite{Muj06}) that if every character on $H_b(E)$ is continuous for some infinite dimensional Banach space $E$ then the conjecture is true for every commutative Fr\'echet algebra. As a corollary of his results we may deduce that the same is true for the Fr\'echet algebra $\hbu(E)$ for any multiplicative sequence $\u$ with constants as in (\ref{constantes}). 
We would like however to sketch an alternative proof of this fact as consequence of a result by Ryan \cite{Rya87} on the convergence of monomial expansions for entire functions on $\ell_1$. Theorem 3.3 in \cite{Rya87} states that for each $f\in H_b(\ell_1)$ there exist unique complex coefficients $(a_m)_{m\in\mathbb N^{(\mathbb N)}}$ such that $f(z)=\sum_{m\in\mathbb N^{(\mathbb N)}}a_mz^m$ , where a multi-index $m\in\mathbb N^{(\mathbb N)}$ is a sequence of are non-negative integers such that only a finite number of them are non-zero and where $z^m=\Pi_{j\in\mathbb N}z_j^{m_j}$. The convergence of the monomial expansion of $f$ is absolute for every $z\in\ell_1$ and uniform on bounded sets of $\ell_1$. Moreover, the coefficients satisfy 
\begin{equation}\label{coef monomial}
\lim_{|m|\to\infty}(|a_m|m^m/|m|^{|m|})^{1/|m|}=0.
\end{equation}
Conversely, any such coefficients define a function in $H_b(\ell_1)$. 
 Let $\mathcal A$ be a commutative, complete, Hausdorff locally $m$-convex algebra which has an unbounded character $\psi$. We may suppose that $\mathcal A$ has unit $e$. Let $\mathbf x=(x_n)$ be a sequence in $\mathcal A$ such that $\sum_np(x_n)<\infty$ for each continuous seminorm $p$ on $\mathcal A$, and that $(\psi(x_n))$ is unbounded. For $m=(m_1,m_2,\dots)\in\mathbb N^{(\mathbb N)}$, let $\mathbf x^m=\Pi_{j\in\mathbb N}x_j^{m_j}$, where $x^0=e$ for every $x\in\mathcal A$. Let $T:H_b(\ell_1)\to\mathcal A$ be defined by $Tf=\sum_{m\in\mathbb N^{(\mathbb N)}}a_m\mathbf x^m$, where $(a_m)_{m\in\mathbb N^{(\mathbb N)}}$ are the coefficients of $f$. Note that $g(z)=\sum_{m\in\mathbb N^{(\mathbb N)}}|a_m|z^m$ defines a function in $H_b(\ell_1)$ because its coefficients 
 satisfy (\ref{coef monomial}). Thus, for a continuous seminorm $p$ on $\mathcal A$, $$\sum_{m\in\mathbb N^{(\mathbb N)}}|a_m|p(\mathbf x^m)\le \sum_{m\in\mathbb N^{(\mathbb N)}}|a_m|\Pi_{j\in\mathbb N}p(x_j)^{m_j}=g((p(x_j))_j),$$ which implies that $T$ is well defined. Clearly $T$ is an algebra homomorphism. 


Note also that $T(e_j')=x_j$, where $e_j'$ is the $j$-th coordinate functional in $\ell_1$. Therefore $\psi\circ T$ is a discontinuous character on $H_b(\ell_1)$.

Let now $E$ be any Banach space and let $(y_j)$, $(y_j')$ be a biorthogonal sequence in $E$ with $\|y_j\|<1$ and $(y_j')$ bounded. Let $M$ be the closed space spanned by the $y_j$'s and let $(z_k)$ be a dense sequence in the unit ball of $M$ that contains the sequence $(y_j)$. Say  $y_j=z_{n_j}$. Then the linear map which sends each $e_k\in\ell_1$ to $z_k$ induces an isomorphism from a quotient of $\ell_1$ to $M$.  Consider the following mapping $R=R_4R_3R_2R_1:H_b(E)\to  H_b(\ell_1)$ 
$$ \hbu(E)\overset{R_1}{\longrightarrow}H_b(E)\overset{R_2}{\longrightarrow} H_b(M) \overset{R_3}{\longrightarrow}  H_b(\ell_1)\overset{R_4}{\longrightarrow} H_b(\ell_1),$$
where $R_1$ is the inclusion, $R_2$ is the restriction from $E$ to $M$, $R_3$ is the composition with the quotient map and $R_4$ is the restriction to the closed space spanned by the $e_{n_j}$'s (we identify this space with $\ell_1$). Then $R(y_j')$ is a linear functional on $\ell_1$. Moreover, if $\overline{z}$ denotes the class of $z\in\ell_1$ in the quotient of $\ell_1$ isomorphic to $M$, then $\overline{e_{n_k}}=z_{n_k}=y_k$. Thus $R(y_j')(e_k)=R_3R_2R_1(y_j')(e_{n_k})=R_2R_1(y_j')(\overline{e_{n_k}})=y_j'(y_k)=\delta_{kj}$, that is, $R(y_j')=e_j'$. 

Since $(y_j')$ is a bounded sequence in $\hbu(E)$ and $R$ is an algebra homomorphism, $\psi\circ T\circ R$ is a discontinuous character on $\hbu(E)$.
\end{remark}

We will now prove that $\hbu(X)$ is a locally $m$-convex Fr\'echet algebra, for $(X,q)$ an arbitrary Riemann domain over $E$. We first need the following.
\begin{lemma}\label{lema B_s+delta}
 Let $A$ be an $X$-bounded set with $d_X(A)\ge\delta$. If $B_s(x)\subset A$ then $B_{s+\delta}(x)$ exists.
\end{lemma}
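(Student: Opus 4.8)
The plan is to recast ``$B_{s+\delta}(x)$ exists'' as the existence of a continuous section of $q$ over the Euclidean ball $B_E(q(x),s+\delta)$ that passes through $x$, and then to construct such a section by gluing together the sections coming from the balls of radius $\delta$ around the points of $A$. Recall that $B_{s+\delta}(x)$ exists exactly when there is a continuous map $\Phi\colon B_E(q(x),s+\delta)\to X$ with $q\circ\Phi=\mathrm{id}$ and $\Phi(q(x))=x$: indeed such a $\Phi$ is automatically a homeomorphism onto an open neighbourhood of $x$, since $q$ is a local homeomorphism (hence open) and $q$ restricted to $\Phi(B_E(q(x),s+\delta))$ is a continuous inverse. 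Write $\varphi=(q|_{B_s(x)})^{-1}\colon B_E(q(x),s)\to X$ for the section attached to the ball $B_s(x)$. For $w\in B_E(q(x),s)$ put $y_w=\varphi(w)\in B_s(x)\subset A$; since $d_X(A)\ge\delta$ we have $d_X(y_w)\ge\delta$, so $B_\delta(y_w)$ exists and carries its own section $\psi_w=(q|_{B_\delta(y_w)})^{-1}\colon B_E(w,\delta)\to X$ (note $q(y_w)=w$).

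I would then define $\Phi$ on $B_E(q(x),s+\delta)$ as follows: given $v$ with $\|v-q(x)\|<s+\delta$, choose a point $w=w(v)$ on the segment $[q(x),v]$ with $\|w-q(x)\|<s$ and $\|v-w\|<\delta$ (possible because $\|v-q(x)\|<s+\delta$; if $\|v-q(x)\|<\delta$ take $w=q(x)$), and set $\Phi(v)=\psi_{w}(v)$, which makes sense since $\|v-w\|<\delta$. Granting well-definedness, the rest is routine: near any $v_0$ the choice $w_0=w(v_0)$ remains admissible for all nearby $v$ (the two defining inequalities are open conditions on $v$), so $\Phi$ agrees locally with the continuous map $\psi_{w_0}$ and is continuous; $q\circ\Phi=\mathrm{id}$ is immediate and forces $\Phi$ to be injective; taking $w=v$ when $v\in B_E(q(x),s)$ shows $\Phi$ extends $\varphi$, so in particular $\Phi(q(x))=x$; and for $V_{w_0}=B_E(w_0,\delta)\cap B_E(q(x),s+\delta)$ one has $\Phi(V_{w_0})=\psi_{w_0}(V_{w_0})$, which is open in $X$, so the image of $\Phi$ is open. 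Hence $\Phi$ exhibits $\Phi(B_E(q(x),s+\delta))$ as a ball $B_{s+\delta}(x)$.

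The one genuine issue, and the step I expect to be the main obstacle, is the well-definedness of $\Phi$: if $w_1,w_2\in B_E(q(x),s)$ both satisfy $\|v-w_i\|<\delta$, one must check $\psi_{w_1}(v)=\psi_{w_2}(v)$. The tool is the uniqueness principle for local homeomorphisms: two continuous sections of $q$ that agree at one point of a connected subset of their common domain agree on all of it (the agreement set is open because $q$ is a local homeomorphism and closed because $X$ is Hausdorff). First, for each admissible $w$ the sections $\psi_w$ and $\varphi$ agree at $w$ (both equal $y_w$) and their common domain $B_E(w,\delta)\cap B_E(q(x),s)$ is convex, hence connected, so $\psi_w=\varphi$ there. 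Next, given $w_1,w_2$ admissible for $v$, the midpoint $m=\frac{1}{2}(w_1+w_2)$ lies in $B_E(q(x),s)$ by convexity, and since $\|w_1-w_2\|\le\|w_1-v\|+\|v-w_2\|<2\delta$ it also lies in $B_E(w_1,\delta)\cap B_E(w_2,\delta)$; at $m$ we get $\psi_{w_1}(m)=\varphi(m)=\psi_{w_2}(m)$, and $B_E(w_1,\delta)\cap B_E(w_2,\delta)$ is convex (hence connected) and contains $v$, so $\psi_{w_1}=\psi_{w_2}$ on it, in particular at $v$. This settles well-definedness and completes the argument.
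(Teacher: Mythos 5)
Your proof is correct, but it follows a genuinely different route from the paper's. The paper proceeds iteratively: it first establishes the weaker statement that $B_{s+\delta/4}(x)$ exists, by writing the candidate ball as $C=\bigcup_{y\in B_s(x)}B_{\delta/4}(y)$ and checking injectivity of $q|_C$ through a two-case analysis (the factor $\delta/4$ is what makes the case of intersecting versus disjoint small balls tractable), and then bootstraps via the geometric series $s+\frac{\delta}{4}(1+\frac34+\dots+(\frac34)^n)\to s+\delta$, taking a nested union of balls at the end. You instead build the full section over $B_E(q(x),s+\delta)$ in a single step by gluing the local sections $\psi_w$, and you replace the paper's hands-on injectivity check by the identity principle for continuous sections of a local homeomorphism over a connected set; the midpoint trick (that $m=\frac12(w_1+w_2)$ lies in $B_E(q(x),s)$ by convexity and in both $\delta$-balls since $\|w_1-w_2\|<2\delta$) is exactly what lets you chain the two sections through $\varphi$ and avoid any loss in the radius, so no iteration is needed. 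What each approach buys: the paper's argument is elementary and stays entirely inside the combinatorics of balls, at the cost of the $\delta/4$ bookkeeping and the limiting union; yours is shorter and more conceptual, at the cost of invoking (and correctly justifying) the open-and-closed argument for the agreement set, plus the minor observation that $d_X(y)\ge\delta$ yields a ball of radius exactly $\delta$ as a nested union --- a point the paper also uses implicitly in its final step.
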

\begin{proof}
Suppose that we can show that $B_{s+\frac{\delta}{4}}(x)$ exists. Then $B_{s+\frac{\delta}{4}}(x)$ is contained in the $X$-bounded set $A_\frac{\delta}{4}=\bigcup_{x\in A}B_{\frac{\delta}{4}}(x)$, and $d_X(A_\frac{\delta}{4})\ge\frac{3\delta}{4}$. Applying the result proved to $B_{s+\frac{\delta}{4}}(x)$ and $A_\frac{\delta}{4}$ we have that $B_{s+\frac{\delta}{4}(1+\frac{3}{4})}(x)$ exists. Applying the same process $n+1$ times, we have that $B_{s+\frac{\delta}{4}(1+\frac{3}{4}+\dots+(\frac{3}{4})^{n})}(x)$ exists. Clearly $\bigcup_{n\in\mathbb N} B_{s+\frac{\delta}{4}(1+\frac{3}{4}+\dots+(\frac{3}{4})^{n})}(x)$ is $B_{s+\delta}(x)$.
Thus, it suffices to prove that $B_{s+\frac{\delta}{4}}(x)$ exists.

 Let $C=\cup_{y\in B_s(x)}B_\frac{\delta}{4}(y)$. Then $q(C)=\cup_{y\in B_s(x)}B_\frac{\delta}{4}(q(y))=B_{s+\frac{\delta}{4}}(q(x))$. If we show that $q|_C$ is injective then $C=B_{s+\frac{\delta}{4}}(x)$.

Take $x_0\ne x_1$ in $C$. Then there exist $y_0,y_1\in B_s(x)$ such that $x_j\in B_\frac{\delta}{4}(y_j)$, $j=0,1$. If $B_\frac{\delta}{4}(y_0)\cap B_\frac{\delta}{4}(y_1)\ne\emptyset$, then $x_0$ and $x_1$ are in $B_\delta(y_0)$. Since $q$ is injective on $B_\delta(y_0)$, we have that $q(x_0)\ne q(x_1)$. On the other hand, if $B_\frac{\delta}{4}(y_0)\cap B_\frac{\delta}{4}(y_1)=\emptyset$, then $B_{s}(q(x))\cap B_\frac{\delta}{4}(q(y_0))\cap B_\frac{\delta}{4}(q(y_1))=\emptyset$, because $q$ is injective on $B_s(x)$.
But, since $q(y_0)$ and $q(y_1)$ are in $B_s(q(x))$, we can conclude that $B_\frac{\delta}{4}(q(y_0))\cap B_\frac{\delta}{4}(q(y_1))=\emptyset$ and thus $q(x_0)\ne q(x_1)$.
\end{proof}

\begin{theorem}
Suppose that $\u$ is a multiplicative sequence with constants as in (\ref{constantes}) and let $(X,q)$ be a Riemann domain over $E$.
Then $\hbu(X)$ is a locally $m$-convex Fr\'echet algebra.
\end{theorem}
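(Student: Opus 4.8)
The plan is to reduce to the Mitiagin--Rolewicz--Zelazko theorem \cite{MitRolZel62} exactly as in Proposition \ref{hbu de la bola algebra}. By Proposition \ref{hbudeX frechet} we already know that $\hbu(X)$ is a Fr\'echet space whose topology is generated by the increasing sequence of seminorms $p_{X_n}$, where $X_n=\{x\in X:\ \|q(x)\|<n\text{ and }d_X(x)>\frac1n\}$; so it is enough to check that $\hbu(X)$ is a commutative $B_0$-algebra and that functions in $H(\mathbb C)$ operate on it. The key point is an estimate of the form: for each $n$ there are $m>n$ and a constant $C_n\ge1$ with $p_{X_n}(fg)\le C_n\,p_{X_m}(f)\,p_{X_m}(g)$ for all $f,g\in\hbu(X)$.

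To prove this estimate, I would fix a ball $B_s(x)\subset X_n$. Since $d_X(X_n)\ge\frac1n$, Lemma \ref{lema B_s+delta} together with its proof shows that $B_{s+\frac1n}(x)$ exists and that $B_{s+\frac1{4n}}(x)$ is contained in the $X$-bounded set $(X_n)_{1/(4n)}=\bigcup_{z\in X_n}B_{1/(4n)}(z)$; checking the bounds on $\|q(\cdot)\|$ and $d_X$ one sees $(X_n)_{1/(4n)}\subset X_{2n}$, so we take $m=2n$. On the ball $B_{s+\frac1n}(q(x))$ I then apply the dilation--translation version of inequality (\ref{seminormas hyper}) with inner radius $s$ and outer radius $s+\frac1{4n}$; the resulting multiplicative constant depends, by the computation carried out in Proposition \ref{hbu de la bola algebra}, only on the ratio $\tfrac{s}{s+1/(4n)}$ of these radii. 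Because $q(B_s(x))=B_E(q(x),s)\subset B_E(0,n)$ forces $s<n$, that ratio is bounded away from $1$ by a quantity depending only on $n$, so the constant is bounded by some $C_n$ uniformly over all balls $B_s(x)\subset X_n$. Hence $p_s^x(fg)\le C_n\,p_s^x(g)\,p_{s+\frac1{4n}}^x(f)\le C_n\,p_{X_n}(g)\,p_{X_{2n}}(f)$, and taking the supremum over $B_s(x)\subset X_n$ gives the estimate (and in particular $fg\in\hbu(X)$). Re-indexing along $q_k:=p_{X_{2^k}}$ yields $q_k(fg)\le C_{2^k}\,q_{k+1}(f)\,q_{k+1}(g)$ with $(q_k)$ increasing and cofinal, so $\hbu(X)$ is a commutative $B_0$-algebra with continuous multiplication.

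Finally I would check that entire functions operate. If $h(z)=\sum_k a_kz^k\in H(\mathbb C)$ and $f\in\hbu(X)$, then $h\circ f\in H_b(X)$, and by Proposition \ref{hbu de la bola algebra} (entire functions operate on $\hbu$ of a ball) its restriction to every ball contained in $X$ is of $\u$-bounded type, so only condition (ii) of Definition \ref{def hbu(X)} remains. Running the same argument as above, but now with the ball estimate $p_\sigma\big(\sum_k a_kf^k\big)\le h^*\!\big(c\,p_{\sigma'}(f)\big)$ from Proposition \ref{hbu de la bola algebra}, where $h^*(z)=\sum_k|a_k|z^k\in H(\mathbb C)$, I obtain for $B_s(x)\subset X_n$ that $p_s^x(h\circ f)\le h^*\!\big(C_n\,p_{X_{2n}}(f)\big)$, hence $p_{X_n}(h\circ f)\le h^*\!\big(C_n\,p_{X_{2n}}(f)\big)<\infty$ and $h\circ f\in\hbu(X)$. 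Then \cite[Theorem 1]{MitRolZel62} applies and $\hbu(X)$ is locally $m$-convex, hence a locally $m$-convex Fr\'echet algebra. I expect the main obstacle to be exactly the uniformity of the multiplicative constant over all balls inside a fixed $X_n$: Lemma \ref{lema B_s+delta} provides the fixed extra radius $\frac1{4n}$, and the a priori bound $s<n$ on radii of balls contained in $X_n$ keeps the ball-constants bounded.
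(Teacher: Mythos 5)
Your proof is correct and follows essentially the same route as the paper's: reduce to the Mitiagin--Rolewicz--Zelazko theorem by establishing the $B_0$-algebra estimate $p_{X_n}(fg)\le C_n\,p_{X_m}(f)\,p_{X_m}(g)$ via Lemma \ref{lema B_s+delta} and the ball-level inequality of Proposition \ref{hbu de la bola algebra}, and then checking that entire functions operate. The only differences are cosmetic (the paper takes $m=n+1$ with $\varepsilon_n<\frac1n-\frac1{n+1}$ where you take $m=2n$ with extra radius $\frac1{4n}$), and you usefully make explicit the uniformity of the multiplicative constant over all balls contained in $X_n$, a point the paper leaves implicit in the phrase ``proceeding as in Proposition \ref{hbu de la bola algebra}''.
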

\begin{proof}

We know from Proposition \ref{hbudeX frechet} that $\hbu(X)$ is a Fr\'echet

Let $X_n$ denote the set $\{x\in X:\, \|q(x)\|< n \textrm{ and }d_X(x)>\frac1{n}\}$. If $B_s(x)$ is contained in $X_n$ and $\varepsilon_n<\frac1{n}-\frac1{n+1}$ then $B_{s+\varepsilon_n}(x)\subset X_{n+1}$ (note that $B_{s+\varepsilon_n}(x)$ exists by Lemma \ref{lema B_s+delta}). Proceeding as in Proposition \ref{hbu de la bola algebra}, we can show that for every $f,g\in\hbu(X)$, $p_s^x(fg)\le c_n p_{s+\varepsilon_n}^x(f)p_{s}^x(g)$, which implies that $p_{X_n}(fg)\le c_n p_{X_{n+1}}(f)p_{X_n}(g)$. Thus, $\hbu(X)$ is a commutative $B_0$-algebra. Moreover, we also have that $p_{X_n}(f^k)\le c_n^k p_{X_{n+1}}(f)^k$, which implies that entire functions operate on $\hbu(X)$. Therefore by \cite[Theorem 1]{MitRolZel62} we conclude that $\hbu(X)$ is a locally $m$-convex algebra.
\end{proof}

To finish this section we present some examples of multiplicative sequences with constants $c_{k,l}$ as in (\ref{constantes}).
\begin{example} It is clear that the following sequences are
multiplicative with constants $c_{k,l}=1$.

$i)$ $\p$, of continuous homogeneous polynomials,

$ii)$ $\p_{w}$, of weakly continuous on bounded sets polynomials,

$iii)$ $\p_A$, of approximable polynomials,

$iv)$ $\p_e$, of extendible polynomials,
\end{example}

\begin{example} The sequence $\p_I$ of integral polynomials.\rm\newline
\smallskip
It was shown in \cite[Example 2.3 (c)]{CarDimMur} that if $P,Q$ are homogenous integral polynomials then $PQ$ is integral with $\|PQ\|_I\le
\frac{(k+l)^{k+l}}{(k+l)!}\frac{k!}{k^k}\frac{l!}{l^l}\|P\|_I\|Q\|_I$
\end{example}

\begin{example}\label{ejemplo nuclear hypermultiplicativas} The sequence
$\p_N$ of nuclear polynomials.\rm\newline\smallskip
Proposition 2.6 in \cite{CarDimMur} implies that if $\{\u_k\}$ is a multiplicative sequence then the sequences of maximal and minimal hulls, $\{\u_k^{max}\}$ and $\{\u_k^{min}\}$, are multiplicative with the same constants. Since nuclear polynomials are the minimal ideal associated to integral
polynomials (see for example \cite[3.4]{Flo01}), we have that they form a multiplicative sequence with constants as in (\ref{constantes}). See also \cite[Exercise 2.63]{Din99}.

  Note that, as a
consequence of Proposition \ref{hbu de la bola algebra}, the space of nuclearly
entire functions of bounded type is a locally $m$-convex Fr\'echet algebra.
\end{example}

The sequences $\p$, $\p_N$,$\p_e$, $\p_I$ and $\p_A$ are particular cases of the following.
\begin{example}
Let $\{\alpha_k\}_k$ be any of the sequences of natural symmetric tensor norms.
Then the sequences  $\{\u^{max}_k\}_k$ and $\{\u^{min}_k\}_k$ of maximal and
minimal ideals associated to $\{\alpha_k\}_k$ are multiplicative with constants $c_{k,l}$ as in (\ref{constantes}).\rm\newline
This follows from the inequalities
$$
\pi_{k+l}(\sigma(s\otimes t))\le
\frac{(k+l)^{k+l}}{(k+l)!}\frac{k!}{k^k}\frac{l!}{l^l}
\pi_k(s)\pi_l(t),\quad\textrm{  }\quad\varepsilon_{k+l}(\sigma(s\otimes t))\le
\varepsilon_k(s)\varepsilon_l(t)
$$
for every $s\in\TS{k}_{}E'$, $t\in\TS{l}_{}E'$ together with Proposition 2.6 and Lemma 2.9 of \cite{CarDimMur}.
\end{example}

 \begin{example}
The sequence $\mathcal M_r$ of multiple $r$-summing polynomials is multiplicative with constants $c_{k,l}=1$.
\end{example}
\begin{proof}
 Let $P\in \mathcal M_r^k(E), Q\in \mathcal M_r^l(E)$, then
$$ (PQ)^{\vee}
(x_1,\dots,x_{k+l})=\frac{k!}{(k+l)!}\sum_{\overset{s_1,\dots,s_l=1}{^{s_1\ne\dots\ne s_l}}}^{k+l} \v
P(x_1,\overset{^{s_1\dots s_l}}{\dots},x_{k+l}) \v
Q(x_{s_1},\dots,x_{s_{l}})$$ where
$\v
P(x_1,\overset{^{s_1\dots s_l}}{\dots},x_{k+l})$ means that coordinates $x_{s_1},\dots,x_{s_{l}}$ are omitted.

Take $(x^{i_{j}}_{j})_{j=1}^{m_j}\subset E$, for $j=1,\dots,k+l$, such that $w_{r}((x^{i_{j}}_{j}))=1$.
Then, using the triangle inequality for the $\ell_r$-norm,
\begin{eqnarray*}
& & \left(  \sum_{i_{1},\dots,i_{k+l}=1}^{m_{1},\dots,m_{k+l}}  \left|
(PQ)^{\vee} (x^{i_{1}}_{1},\dots,x^{i_{k+l}}_{k+l})\right|^{r}
\right)^{\frac{1}{r}} \le \\
&  & \ \quad\le  \frac{k!}{(k+l)!}\sum_{\overset{s_1,\dots,s_l=1}{^{s_1\ne\dots\ne s_l}}}^{k+l}
\left(\sum_{i_{1},\dots,i_{k+l}=1}^{m_{1},\dots,m_{k+l}}
\left|\v P(x_1,\overset{^{s_1\dots s_l}}{\dots},x_{k+l})\right|^r \left|\v
Q(x_{s_1},\dots,x_{s_{l}})\right|^r
\right)^{1/r} \\
&  & \ \quad\le \frac{k!}{(k+l)!}\sum_{\overset{s_1,\dots,s_l=1}{^{s_1\ne\dots\ne s_l}}}^{k+l} \left(
\sum_{i_{s_1},\dots,i_{s_l}=1}^{m_{s_1},\dots,m_{s_l}}\left|\v
Q(x_{s_1},\dots,x_{s_{l}})\right|^r  \|P\|_{\mathcal M_r ^k}^r\right)^{1/r}\\
&  & \ \quad\le \frac{k!}{(k+l)!}\sum_{\overset{s_1,\dots,s_l=1}{^{s_1\ne\dots\ne s_l}}}^{k+l} \|P\|_{\mathcal M_r^k}\|Q\|_{\mathcal M_r^l} =\|P\|_{\mathcal M_r^k}\|Q\|_{\mathcal M_r^l}.\end{eqnarray*}
Hence, $PQ$ is multiple $r$-summing with $\|PQ\|_{\mathcal M_r^{k+l}}\le \|P\|_{\mathcal M_r^k}\|Q\|_{\mathcal M_r^l}$.
\end{proof}

\begin{example}\label{HS multip}
The sequence $\mathcal S_2$ of Hilbert-Schmidt polynomials.
\newline
\smallskip\rm
Petersson proved in \cite{Pet01} that the ideals of Hilbert-Schmidt polynomials form a multiplicative sequence with $c_{k,l}\le \sqrt{2^{k+l}}$. The duality between multiplicativity and weak differentiability (see Remark \ref{w-dif dual a mult} and \cite[Proposition 3.16]{CarDimMur}) will allow us to prove in Example \ref{HS w-diff} that actually $c_{k,l}=1$.
\end{example}

\begin{example}\label{schatten multip}
The sequence $\mathcal S_p$ of $p$-Schatten-von Neumann polynomials.\rm
\newline
\smallskip
Using the Reiteration theorem for the complex interpolation method and the previous examples we deduce that $\{\mathcal S_p^k\}$ is a multiplicative sequence with constants $c_{k,l}=1$ for $2<p<\infty$ and $$c_{k,l} \le\Big(\frac{(k+l)^{k+l}}{(k+l)!}\frac{k!}{k^k}\frac{l!}{l^l}\Big)^{\frac2{p}-1} $$ for $1<p<2$.
\end{example}

\section{Analytic structure on the spectrum}

Let $(X,q)$ be a Riemann domain over a Banach space $E$. In this section we  prove that, under fairly general conditions, the spectrum of the algebra $\hbu(X)$ may be endowed with a structure of Riemann domain spread over the bidual $E''$. This will extend some of the results in \cite{AroGalGarMae96,CarDimMur}.

As in the case of $H_b$ studied in \cite{AroGalGarMae96} or entire functions in $\hbu(E)$ studied in \cite{CarDimMur}, extensions to the bidual will be crucial, so we will need them to behave nicely. Indeed, we will need the following two conditions which were already defined in \cite{CarDimMur}.
\begin{definition}\label{AB closed}\rm
Let $\u$ be a sequence of ideals of polynomials.
We say that $\u$ is 
 $\mathbf{AB}$-\textbf{closed} if for each Banach space $E$, $k\in\mathbb N$ and
$P\in\u_k(E)$ we have that $AB(P)$ belongs to $\u_k(E'')$ and
$\|AB(P)\|_{\u_k(E'')}\le\|P\|_{\u_k(E)}$, where $AB$
denotes the Aron-Berner extension \cite{AroBer78}.
\end{definition}
Recall that an \textbf{Arens extension} of a $k$-linear form $A$ on $E$ is an extension to the bidual $E''$ obtained by $w^*$-continuity on each variable in some order.
\begin{definition}\rm
We say that a sequence of ideals of polynomials $\u$ is \textbf{regular} at $E$ if, for every
$k$ and every $P$ in $\u_k(E)$, we have that every Arens extension  of  $\v P$ is
symmetric.
 We say that the sequence $\u$ is regular if it is regular at $E$ for every Banach space $E$.
\end{definition}
All the examples given Section \ref{section multiplicative} are known to be $AB$-closed (see \cite{CarDimMur}).
 All the examples given Section \ref{section multiplicative} but $\p$ and $\mathcal M_r$ are known to be regular at any Banach space. Any sequence of polynomial ideals is regular at a symmetrically regular Banach space.

The main purpose of this section is to prove the following result.
\begin{theorem}\label{mbu dom riemann}
Let $\u$ be a multiplicative holomorphy type with constants as in (\ref{constantes}) which is regular at $E$ and $AB$-closed. Then
 $(M_{b\u}(X),\pi)$ is a Riemann domain over $E''$.
\end{theorem}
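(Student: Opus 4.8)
The plan is to construct, for each $\varphi$ in the spectrum $M_{b\u}(X)$, a canonical neighbourhood that $\pi$ maps homeomorphically onto a ball in $E''$, mimicking the Aron--Galindo--Garc\'{\i}a--Maestre approach for $H_b$ and its adaptation to $\hbu(E)$ in \cite{CarDimMur}, but now working fibrewise over $X$. The first step is to recall that $M_{b\u}(X)$ is the set of continuous nonzero multiplicative linear functionals on $\hbu(X)$ (which makes sense since, by the theorem of the previous section, $\hbu(X)$ is a locally $m$-convex Fr\'echet algebra) and to define the projection $\pi:M_{b\u}(X)\to E''$ by $\pi(\varphi)=(\varphi(q_1),\dots)$, i.e. $\pi(\varphi)|_{E'}=\gamma\mapsto\varphi(\gamma\circ q)$; one checks this is a well-defined element of $E''$ using continuity of $\varphi$ and the fact that the coordinate functions $\gamma\circ q$ lie in $\hbu(X)$ with controlled seminorms.

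Next I would set up the local structure. Fix $\varphi\in M_{b\u}(X)$; since $\varphi$ is continuous there is an $X$-bounded open set $A=X_n$ with $|\varphi(f)|\le C p_{X_n}(f)$ for all $f$. The key analytic input is that for $f\in\hbu(X)$ and $x$ with $B_s(x)\subset X_n$, the Taylor series $\sum_k \frac{d^kf(x)}{k!}$ converges in $\u_k$-norms with the $p_s^x$ bounds, so that the Aron--Berner extensions $AB\big(\tfrac{d^kf(x)}{k!}\big)$ — which belong to $\u_k(E'')$ with the \emph{same} norm, by the $AB$-closedness hypothesis — can be summed to produce a holomorphic function on a ball $B_{E''}(\widetilde{x},s)$ extending $f$ near $x$. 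Regularity of $\u$ at $E$ guarantees these Aron--Berner extensions are well-defined (order-independent) and fit together coherently as $x$ varies, and multiplicativity ensures the extension map is an algebra homomorphism on the relevant local algebra. Concretely, for $z''\in E''$ with $\|z''-\pi(\varphi)\|$ small one defines $\delta_{z''}\in M_{b\u}(X)$ by $\delta_{z''}(f)=\sum_k \widetilde{(d^kf/k!)}(z''-\pi(\varphi))$ evaluated via the local $AB$-extensions based at a representative of $\varphi$; the main point is that this sum converges (using \eqref{constantes}-type estimates exactly as in Proposition \ref{hbu de la bola algebra}), is multiplicative, and is continuous.

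Then I would define, for $\varphi\in M_{b\u}(X)$, the radius $r(\varphi)$ as the supremum of $s>0$ such that the above construction yields a well-defined ``translate'' $\varphi_{z''}$ for all $z''$ in $B_{E''}(\pi(\varphi),s)$, and set the neighbourhood $V_\varphi=\{\varphi_{z''}:\|z''-\pi(\varphi)\|<r(\varphi)\}$. One must check: (a) $r(\varphi)>0$, which follows from the continuity estimate for $\varphi$ and the seminorm bounds on $\hbu(X)$; (b) $\pi|_{V_\varphi}$ is a bijection onto $B_{E''}(\pi(\varphi),r(\varphi))$, with injectivity coming from the fact that the $AB$-extended monomials $\gamma\circ q$ already separate points of the ball in $E''$; (c) these $V_\varphi$ form the basis of a Hausdorff topology making $\pi$ a local homeomorphism and agreeing with the Gelfand topology — here one uses that $V_\varphi\cap V_\psi$, when nonempty, is open in each, by an argument showing $r(\cdot)$ is suitably continuous (it satisfies $|r(\varphi)-r(\psi)|\le\|\pi(\varphi)-\pi(\psi)\|$-type estimates, or is at least lower semicontinuous). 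Finally one verifies the natural map $X\to M_{b\u}(X)$, $x\mapsto\delta_x$, is compatible with this structure so $(M_{b\u}(X),\pi)$ genuinely extends $(X,q)$.

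The main obstacle I expect is step (c) together with the coherence of the local $AB$-based translations: showing that the translate $\varphi_{z''}$ does not depend on which representative point $x\in X$ (with $B_s(x)$ in the relevant $X$-bounded set) is used to compute the $AB$-extensions, and that these locally defined charts glue into a \emph{globally Hausdorff} Riemann domain. This is where regularity (symmetry of all Arens extensions) and $AB$-closedness (norm control, so the series really converge on a fixed ball radius) are both essential, and it is the technical heart that parallels — but must be carried out more carefully than — the ball case of \cite{CarDimMur} and the $H_b$ case of \cite{AroGalGarMae96}. The multiplicative-with-constants-\eqref{constantes} hypothesis enters precisely to guarantee that the relevant Taylor-type series for products and for the translated functionals converge on balls (not merely on all of $E''$), exactly as in the proof of Proposition \ref{hbu de la bola algebra}.
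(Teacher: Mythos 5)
Your overall skeleton (the projection $\pi(\varphi)(\gamma)=\varphi(\gamma\circ q)$, translated characters, basic neighbourhoods $V_{\varphi,\varepsilon}=\{\varphi^w:\|w\|<\varepsilon\}$, then Hausdorffness and local homeomorphy) matches the paper's, but the central construction has a genuine gap. You define the translate by $\delta_{z''}(f)=\sum_k \widetilde{(d^kf/k!)}(z''-\pi(\varphi))$ ``evaluated via the local $AB$-extensions based at a representative of $\varphi$,'' and you then identify independence of the chosen representative point $x\in X$ as the main obstacle. But a general character $\varphi\in\mbu(X)$ has no representative point in $X$ at all --- the whole reason the spectrum carries new analytic structure is that it is strictly larger than $\delta(X)$ --- so a Taylor expansion based at a point of $X$ simply does not define $\varphi^w$ for the characters that matter. (As written, your formula also depends on $\varphi$ only through $\pi(\varphi)$, which would collapse each fibre of $\pi$ to a single character.) The paper's definition is $\varphi^w(f)=\sum_{n}\varphi\bigl(AB(\tfrac{d^nf(\cdot)}{n!})(w)\bigr)$: one first proves (Lemma \ref{lema1}, via Lemma \ref{lema0}) that the \emph{globally defined} function $x\mapsto AB(\tfrac{d^nf(x)}{n!})(w)$ again belongs to $\hbu(X)$, with the uniform estimates of Corollary \ref{coro lema1} giving convergence of the series of seminorms $p_A$, and only then applies the character $\varphi$ termwise. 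This removes the ``which representative point'' problem entirely; it is the step your proposal is missing, and it is where most of the technical work (the differentiation estimates with the constants (\ref{constantes})) actually lives.

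A second, smaller point: for Hausdorffness you appeal to a vaguely stated continuity of $r(\cdot)$, whereas the argument that actually works is the identity $(\varphi^w)^v=\varphi^{w+v}$ (Lemma \ref{lema phi^v+w} in the paper), which lets one separate $V_{\varphi,r}$ from $V_{\psi,r}$ by writing $\varphi=(\varphi^v)^{-v}$. Proving that identity is itself nontrivial: it requires introducing the translation operators $\tilde\tau_w$ on the completion $\overline{\hbu(X)}^{p_A}$ (Lemma \ref{lema tau_w}), an interchange of a double series justified by Corollary \ref{coro lema1} with $h=2$, and the regularity hypothesis to symmetrize the Arens extensions. Your proposal correctly senses that regularity and $AB$-closedness enter at the gluing stage, but does not supply the mechanism.
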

First we will need some preliminary lemma. 
 \begin{lemma}\label{lema0}
  Let $\u$ be a holomorphy type with constants as in (\ref{constantes}). Define $\delta_{(w_{1}^{k_1},\dots,w_{h}^{k_h})}\in\u_k(E)'$~by, $$\delta_{(w_{1}^{k_1},\dots,w_{h}^{k_h})}(Q)=\v Q(w_{1}^{k_1},\dots,w_{h}^{k_h}),$$ where, $w_{1},\dots,w_{h}\in E$ and $k_1,\dots,k_h\in\mathbb N$ are such that $k_1+\dots+k_h=k$.  Then for any $P\in\u_{k+l}(E)$, the polynomial  $R(x):=\delta_{(w_{1}^{k_1},\dots,w_{h}^{k_h})}(P_{x^l})$ is in $\u_l(E)$ and
$$
\|R\|_{\u_l(E)}\le\frac{(k+l)^{k+l}k_1!\dots k_h!l!}{(k+l)!k_1^{k_1}\dots k_h^{k_h}l^l}\|w_1\|^{k_1}\dots\|w_h\|^{k_h}\|P\|_{\u_{k+l}(E)}.
$$
If $\u$ is also $AB$-closed and regular at $E$ then the above statements hold for any $w_{1},\dots,w_{h}\in E''$, where $\delta_{(w_{1}^{k_1},\dots,w_{h}^{k_h})}\in\u_k(E)'$ is defined by $\delta_{(w_{1}^{k_1},\dots,w_{h}^{k_h})}(Q)=\v {AB(Q)}(w_{1}^{k_1},\dots,w_{h}^{k_h})$.
 \end{lemma}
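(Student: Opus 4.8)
The plan is to prove the inequality in two stages, first reducing the polynomial $R$ to a composition of differentiation operators, and then estimating the norm of each differentiation step using the holomorphy type inequality \eqref{htype} together with the constant bound \eqref{constantes}. First I would observe that $R$ is obtained from $P$ by iterating the operation $Q\mapsto Q_{x^j}$ and then evaluating. More precisely, fix an enumeration in which we first peel off $k_1$ copies of $w_1$, then $k_2$ copies of $w_2$, and so on; the point is that $\delta_{(w_1^{k_1},\dots,w_h^{k_h})}(P_{x^l})$ can be written, up to the symmetry of $\v P$, as the value at the $h$-tuple $(w_1,\dots,w_h)$ of the $(k+l)$-linear form applied with $l$ slots holding the variable $x$; so $R$ is literally the polynomial $x\mapsto \v{P}(w_1^{k_1},\dots,w_h^{k_h},x^l)$, i.e. $R=(\cdots((P_{w_1^{k_1}})_{w_2^{k_2}})\cdots)_{w_h^{k_h}}$, a polynomial in $\u_l(E)$ by $h$ successive applications of \eqref{htype}.

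The main estimate then is multiplicative along the chain. After peeling $k_1$ copies of $w_1$ from a polynomial of degree $k+l$ we land in $\u_{k+l-k_1}(E)$ with norm multiplied by $c_{k+l,k_1}\|w_1\|^{k_1}$; after the next step we multiply by $c_{k+l-k_1,k_2}\|w_2\|^{k_2}$; and so on, until the last step peels $k_h$ copies of $w_h$ from a polynomial of degree $l+k_h$, multiplying by $c_{l+k_h,k_h}\|w_h\|^{k_h}$. Plugging in the bound \eqref{constantes} for each $c_{\cdot,\cdot}$, the product of these constants telescopes: writing $c_{m,j}\le\frac{m^m}{m!}\frac{(m-j)!}{(m-j)^{m-j}}\frac{j!}{j^j}$ (which is exactly \eqref{constantes} with $k=m-j$, $l=j$, since $(k+l)=m$), the factors $\frac{(m-j)!}{(m-j)^{m-j}}$ of one step cancel the $\frac{m^m}{m!}$ of the next. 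What survives is $\frac{(k+l)^{k+l}}{(k+l)!}\cdot\frac{k_1!}{k_1^{k_1}}\cdots\frac{k_h!}{k_h^{k_h}}\cdot\frac{l!}{l^l}$, which is the claimed constant. So the heart of the argument is just bookkeeping on this telescoping product; I would present it cleanly by induction on $h$, the case $h=1$ being \eqref{htype}–\eqref{constantes} directly and the inductive step peeling one block $w_h^{k_h}$ and using the inductive hypothesis on the degree-$(l+k_h)$ polynomial $P_{x^l}$ viewed appropriately — or, more transparently, peel $w_1^{k_1}$ first and induct on the shorter list $(k_2,\dots,k_h)$ with $l$ unchanged but starting degree $k+l-k_1$.

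For the last sentence of the lemma — the $E''$ version — the plan is to replace $P$ by its Aron–Berner extension $AB(P)\in\u_{k+l}(E'')$, which is legitimate and norm-nonincreasing precisely because $\u$ is $AB$-closed, and to note that regularity of $\u$ at $E$ guarantees that every Arens extension of $\v P$ is symmetric, so that the iterated evaluation $\delta_{(w_1^{k_1},\dots,w_h^{k_h})}(P_{x^l})$ is well-defined independently of the order in which the $w_i\in E''$ and the $l$ copies of $x\in E$ are inserted, and agrees with $\v{AB(P)}(w_1^{k_1},\dots,w_h^{k_h},x^l)$. Then one reruns the chain of estimates verbatim over $E''$: each step uses \eqref{htype} for the Banach space $E''$, and the norm of the resulting degree-$l$ polynomial on $E''$ dominates (in fact equals, on $E$) the norm of $R$ on $E$ by the ideal property (ii) applied to the inclusion $E\hookrightarrow E''$. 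The one point requiring a little care — and the only genuine obstacle — is checking that the partially-evaluated objects $(AB(P))_{w_1^{k_1}},\ (AB(P))_{w_1^{k_1},w_2^{k_2}},\dots$ are themselves symmetric-Arens-compatible so that \eqref{htype} can be applied at each intermediate stage; this follows because a slot-by-slot $w^*$-continuous extension of a symmetric form restricts, after fixing some bidual arguments, to a symmetric form whose Arens extensions are again symmetric, but it is worth spelling out rather than asserting.
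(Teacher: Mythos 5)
Your proposal is correct and follows essentially the same route as the paper: the paper also proves the bound by induction on $h$, peeling off one block $w_{n+1}^{k_{n+1}}$ at a time via \eqref{htype} and letting the constants from \eqref{constantes} telescope to $\frac{(k+l)^{k+l}}{(k+l)!}\frac{k_1!}{k_1^{k_1}}\cdots\frac{k_h!}{k_h^{k_h}}\frac{l!}{l^l}$. The only difference is that the paper dispatches the $E''$ case with ``the last statement follows similarly,'' whereas you spell out the role of $AB$-closedness, regularity, and the ideal property for $E\hookrightarrow E''$ --- a worthwhile elaboration, not a different argument.
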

\begin{proof}
We proceed by induction on $h$. For $h=1$, this is a consequence of $\u$ being a holomorphy type. Suppose that it holds for $h=n$ and let $Q(x)=\delta_{(w_{1}^{k_1},\dots,w_{n}^{k_n})}(P_{x^{k_{n+1}+l}})$. Then $Q$ belongs to $\u_{k_{n+1}+l}(E)$ and
\begin{equation}\label{eq1 remark lema1}
 \|Q\|_{\u_{k_{n+1}+l}(E)}\le\frac{(k+l)^{k+l}k_1!\dots k_n!(k_{n+1}+l)!}{(k+l)!k_1^{k_1}\dots k_n^{k_n}(k_{n+1}+l)^{k_{n+1}+l}}\|w_1\|^{k_1}\dots\|w_n\|^{k_n}\|P\|_{\u_{k+l}(E)}.
\end{equation}
Thus, $x\mapsto\delta_{w_{n+1}^{k_n+1}}(Q_{x^l})=\delta_{(w_{1}^{k_1},\dots,w_{n+1}^{k_{n+1}})}(P_{x^{l}})$ belongs to $\u_l(E)$ and
\begin{equation}\label{eq2 remark lema1}
 \|x\mapsto\delta_{w_{n+1}^{k_n+1}}(Q_{x^l})\|_{\u_l(E)}\le\frac{(k_{n+1}+l)^{k_{n+1}+l}k_{n+1}!l!}{(k_{n+1}+l)!k_{n+1}^{k_{n+1}}l^l}\|w_{k_{n+1}}\|^{n+1}\|Q\|_{\u_{k_{n+1}+l}(E)}.
\end{equation}
Putting (\ref{eq1 remark lema1}) and (\ref{eq2 remark lema1}) together, we obtain our claim. The last statement follows similarly.
 \end{proof}
\begin{lemma}\label{lema1}
 Let $\u$ be a holomorphy type with constants as in (\ref{constantes}),  $k\in\mathbb N$, $w_{1},\dots,w_{h}\in E$ and $k_1,\dots,k_h\in\mathbb N$ such that $k_1+\dots+k_h=k$. Then $\delta_{(w_{1}^{k_1},\dots,w_{h}^{k_h})}\circ\frac{d^kf}{k!}$ belongs to $\hbu(X)$.

If $\u$ is $AB$-closed and regular at $E$ then the same holds for any $w\in E''$.
\end{lemma}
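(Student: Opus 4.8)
The plan is to verify directly, for $f\in\hbu(X)$ and $g:=\delta_{(w_{1}^{k_1},\dots,w_{h}^{k_h})}\circ\frac{d^kf}{k!}$, the two conditions of Definition \ref{def hbu(X)}. Fix $x_0\in X$ and let $P_n=\frac{d^nf(x_0)}{n!}\in\u_n(E)$ be the Taylor coefficients of $f$ read in the chart at $x_0$, so that $p_s^{x_0}(f)=\sum_n s^n\|P_n\|_{\u_n(E)}<\infty$ whenever $s<d_X(x_0)$. The computation rests on the classical identity
\[
\frac{d^kf(x_0+y)}{k!}=\sum_{n\ge k}\binom nk (P_n)_{y^{\,n-k}}\qquad(\|y\|<d_X(x_0)),
\]
obtained by differentiating the Taylor series term by term; the series converges in $\u_k(E)$ thanks to the holomorphy-type estimate (\ref{htype}). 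Applying $\delta_{(w_{1}^{k_1},\dots,w_{h}^{k_h})}$ term by term identifies the $m$-homogeneous part of $g$ at $x_0$ as $\binom{m+k}{k}R_{m+k}$, where $R_{m+k}(y)=\delta_{(w_{1}^{k_1},\dots,w_{h}^{k_h})}\big((P_{m+k})_{y^{\,m}}\big)$ is precisely the polynomial supplied by Lemma \ref{lema0} (with $l=m$ and total degree $k=k_1+\dots+k_h$); in particular $g$ is locally a convergent power series, hence holomorphic on $X$.

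For condition (i) I would combine the bound of Lemma \ref{lema0} with the simplification $\binom{m+k}{k}\frac{(m+k)^{m+k}\,m!}{(m+k)!\,m^m}=\frac{(m+k)^{m+k}}{k!\,m^m}$ and the elementary inequality $\frac{(m+k)^{m+k}}{m^m}=(1+\tfrac km)^m(m+k)^k\le e^k(m+k)^k$ (in the spirit of Remark \ref{stirling}) to obtain
\[
p_s^{x_0}(g)\le K\,e^k\sum_{j\ge k}s^{\,j-k}j^{\,k}\,\|P_j\|_{\u_j(E)},\qquad K:=\frac{k_1!\cdots k_h!}{k!\,k_1^{k_1}\cdots k_h^{k_h}}\|w_1\|^{k_1}\cdots\|w_h\|^{k_h}.
\]
Since $j^{\,k}(s/s')^{\,j-k}$ is bounded over $j\ge k$ for any $s'>s$, there is $C=C(k,s,s')$ with $s^{\,j-k}j^{\,k}\le C\,(s')^{\,j}$, hence $p_s^{x_0}(g)\le K e^k C\,p_{s'}^{x_0}(f)$. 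Choosing $s<s'<d_X(x_0)$ shows $g\circ(q|_{B_s(x_0)})^{-1}\in\hbu(q(B_s(x_0)))$ for every $s\le d_X(x_0)$, which is condition (i).

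The main difficulty is condition (ii): bounding $p_A(g)=\sup\{p_s^{x_0}(g):B_s(x_0)\subset A\}$ \emph{uniformly}, since the constant $C$ above degenerates as $s'/s\to 1$. Let $A$ be $X$-bounded with $d_X(A)\ge\delta>0$ and $\|q\|\le M$ on $A$. Given $B_s(x_0)\subset A$, Lemma \ref{lema B_s+delta} ensures $B_{s+\delta}(x_0)$ exists, so $d_X(x_0)\ge s+\delta$; take $s'=s+\delta/2<d_X(x_0)$. Then $B_{s'}(x_0)\subset A_{\delta/2}:=\bigcup_{x\in A}B_{\delta/2}(x)$, an $X$-bounded set, so $p_{s'}^{x_0}(f)\le p_{A_{\delta/2}}(f)<\infty$. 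Furthermore $q(B_s(x_0))=B_s(q(x_0))\subset B_E(0,M)$ forces $s\le M$, so $s'/s\ge 1+\delta/(2M)=:\rho>1$ uniformly in $B_s(x_0)\subset A$; therefore $j^{\,k}(s/s')^{\,j-k}\le j^{\,k}\rho^{-(j-k)}$ is bounded by a constant depending only on $k$ and $\rho$, and $C$ may be chosen independent of $B_s(x_0)$. This yields $p_A(g)\le K'\,p_{A_{\delta/2}}(f)<\infty$ with $K'$ depending only on $k$, the $w_i$, $\delta$, $M$, so $g\in\hbu(X)$. When $\u$ is $AB$-closed and regular at $E$ the argument is verbatim the same, now invoking the last statement of Lemma \ref{lema0} (same constants for $w_1,\dots,w_h\in E''$), giving the conclusion for $w\in E''$.
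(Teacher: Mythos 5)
Your argument is correct, and its skeleton is the same as the paper's: expand $\frac{d^kf}{k!}(y)=\sum_{m\ge k}\binom{m}{k}(Q_m)_{(q(y)-q(x_0))^{m-k}}$ around each $x_0$, apply the functional term by term, and control the resulting Taylor coefficients with Lemma \ref{lema0}. The one place where you genuinely diverge is the uniformity of the estimate over all balls $B_s(x_0)\subset A$, which is the delicate point of the lemma. The paper avoids any ratio degeneration by the exact binomial re-summation
$\alpha^k\sum_{m\ge k}\binom{m}{k}s^{m-k}\|Q_m\|_{\u_m(E)}\le\sum_{j\ge0}\alpha^j\sum_{m\ge j}\binom{m}{j}s^{m-j}\|Q_m\|_{\u_m(E)}=p_{s+\alpha}^{x_0}(f)$,
valid for any fixed $\alpha<d_X(A)$ and making no reference to the size of $s$ (the factor $\sqrt{(m-k)/m}\le1$ coming from (\ref{eq stirling}) is simply discarded). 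You instead keep the polynomial factor $j^k$ and absorb it into a geometric ratio, which forces you to observe that $s\le M$ on an $X$-bounded set so that $(s+\delta/2)/s\ge1+\delta/(2M)>1$ uniformly; this is valid (and your use of Lemma \ref{lema B_s+delta} to guarantee $B_{s+\delta/2}(x_0)\subset A_{\delta/2}$ is the same device the paper uses for $\tilde A$), but it needs the boundedness of $q(A)$, which the paper's identity does not, and it yields a cruder $k$-dependence of the constant --- harmless for this lemma, though the paper's sharper form is what makes the $k$-independent bounds of Corollary \ref{coro lema1} fall out immediately. The only cosmetic gap is that you assert rather than verify the absolute convergence in $\u_k(E)$ of the differentiated series (the paper checks it via Remark \ref{stirling}), but the same estimate you already use supplies it.
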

\begin{proof}
 We prove the case $w\in E''$. The other case is similar.  By \cite[$\S$10 Proposition 2]{Nac69}, $d^kf\in H(X,\u_k(E))$. 
Since $\varphi=\delta_{(w_{1}^{k_1},\dots,w_{h}^{k_h})}$ is a continuous linear form on $\u_k(E)$, $\varphi\circ d^kf$ is in $H(X)$.

Let $B_s(x_0)\subset X$ and denote $\frac{d^mf}{m!}(x_0)$ by $Q_m$. Then, for $y\in B_s(x_0)$ we have, by \cite[p.41 (1)]{Nac69}, $$\frac{d^kf}{k!}(y)= \sum_{m\ge k}\binom{m}{k}\big(Q_m\big)_{(q(y)-q(x_0))^{m-k}}.$$ 
This series is absolutely convergent in $\u_k(E)$. Indeed, for $\delta,\varepsilon>0$ such that $(1+\varepsilon)(\delta+\|q(y)-q(x_0)\|)<s$ and using Remark \ref{stirling} we have,
\begin{eqnarray*}\displaystyle
      \sum_{j\ge0}\delta^j\sum_{m\ge j}\binom{m}{j}\|\big(Q_m\big)_{(q(y)-q(x_0))^{m-j}}\|_{\u_j(E)} & \displaystyle\le & c_\varepsilon\sum_{j\ge0}\delta^j\sum_{m\ge j}\binom{m}{j}(1+\varepsilon)^m\|Q_m\|_{\u_m(E)}\|q(y)-q(x_0)\|^{m-j} \\
      &=& c_\varepsilon p^{x_0}_{(1+\varepsilon)(\delta+\|q(y)-q(x_0)\|)}(f)<\infty.
  \end{eqnarray*}
Then $\varphi\circ\frac{d^kf}{k!}(y)=\sum_{m\ge k}\binom{m}{k}\varphi\circ\big(Q_m\big)_{(q(y)-q(x_0))^{m-k}}$
 and therefore,
$$
\frac{d^{m-k}\big(\varphi\circ\frac{d^kf}{k!}\big)}{(m-k)!}(x_0)=x\mapsto\binom{m}{k}\varphi\circ\big(Q_m\big)_{x^{m-k}}.
$$
By the above lemma, the differentials of $\varphi\circ\frac{d^kf}{k!}$ are in $\u$.

Let $A$ be an open $X$-bounded  set and $\alpha<d_X(A)$. Let $B_s(x_0)\subset A$.
Then, by (\ref{eq stirling}) and Lemma \ref{lema0}, we have
\begin{eqnarray*}
\alpha^kp_s^{x_0}(\varphi\circ\frac{d^kf}{k!}) & \displaystyle\le & \alpha^k\sum_{m\ge k}s^{m-k}\big\|\frac{d^{m-k}\big(\varphi\circ\frac{d^kf}{k!}\big)}{(m-k)!}(x_0)\big\|_{\u_{m-k}(E)} \\
& \le & e^{h+1}(k_1\dots k_h)^{\frac12}\|w_1\|^{k_1}\dots\|w_h\|^{k_h}\alpha^k\sum_{m\ge k}\binom{m}{k}s^{m-k}\sqrt{\frac{m-k}{m}}\|Q_m\|_{\u_m(E)} \\ 
& \le & C_k\alpha^k\|w_1\|^{k_1}\dots\|w_h\|^{k_h}\sum_{m\ge k}\binom{m}{k}s^{m-k}\|Q_m\|_{\u_m(E)} \\
& \le & C_k\|w_1\|^{k_1}\dots\|w_h\|^{k_h}\sum_{j=0}^\infty\alpha^j\sum_{m\ge j}\binom{m}{j}s^{m-j}\|Q_m\|_{\u_m(E)} \\
& = & C_k\|w_1\|^{k_1}\dots\|w_h\|^{k_h}p_{s+\alpha}^{x_0}(f)\le C_k\|w_1\|^{k_1}\dots\|w_h\|^{k_h}p_{\tilde A}(f),
\end{eqnarray*}
where $\tilde A=\bigcup_{x\in A}B_\alpha(x)$ is an open $X$-bounded set (note that by Lemma \ref{lema B_s+delta}, $B_{s+\alpha}(x_0)$ exists and is contained in $\tilde A$). Therefore $p_A(f)<\infty$ and thus $\varphi\circ\frac{d^kf}{k!}$ is in $\hbu(X)$.
\end{proof}
The following corollary states that, for $h=1$ or $h=2$ in the previous lemma, we obtain bounds which are independet of $k$.
\begin{corollary}\label{coro lema1}
Let $\u$ be a holomorphy type with constants as in (\ref{constantes}) and $f\in\hbu(X)$. Let $A$ be an open $X$-bounded set and $\alpha<\rho<d_X(A)$. Define the open $X$-bounded set $\tilde A=\bigcup_{x\in A}B_\rho(x)$. Then there exist a positive constant $C$
depending only on $\alpha$ and $\rho$, such that:\\
$(i)$ for each $k\in\mathbb N$ and $w\in E$, $\delta_w\circ\frac{d^kf}{k!}$ belongs to $\hbu(X)$ and
$$
\alpha^k p_A(\delta_w\circ\frac{d^kf}{k!})\le C\|w\|^kp_{\tilde A}(f).
$$
$(ii)$ for each $l\le k\in\mathbb N$ and $v,w\in E$, $\delta_{(v^{k-l},w^l)}\circ\frac{d^kf}{k!}$ belongs to $\hbu(X)$ and
$$
\alpha^k p_A(\delta_{(v^{k-l},w^l)}\circ\frac{d^kf}{k!})\le C\|v\|^{k-l}\|w\|^lp_{\tilde A}(f).
$$
If $\u$ is $AB$-closed and regular at $E$ then the above statements hold for any $v,w\in E''$.
%
\end{corollary}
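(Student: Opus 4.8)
The plan is to re-run the estimate from the proof of Lemma \ref{lema1} but keeping careful track of which constants depend on $k$, and then to absorb the genuinely $k$-dependent factor into the geometric margin $\rho-\alpha$. In the proof of Lemma \ref{lema1}, with $h=1$ (so $k_1=k$) and $w_1=w$, the chain of inequalities produced
$\alpha^k p_A(\delta_w\circ\tfrac{d^kf}{k!}) \le C_k\|w\|^k p_{\tilde A}(f)$,
where the constant $C_k$ came from combining $e^{h+1}(k_1\dots k_h)^{1/2}=e^2 k^{1/2}$ (from Remark \ref{stirling}) with the Lemma \ref{lema0} bound, whose coefficient for $h=1$ is $\tfrac{(k+l)^{k+l}k!l!}{(k+l)!k^k l^l}$ — and this, by \eqref{eq stirling}, is at most $e^2\big(\tfrac{kl}{k+l}\big)^{1/2}\le e^2 k^{1/2}$. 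So the only $k$-dependence is a polynomial factor, say $C_k\le C' k$ for an absolute constant $C'$. First I would redo that computation cleanly for $h=1$ and $h=2$ separately: for $h=2$, $k_1=k-l$, $k_2=l$, $w_1=v$, $w_2=w$, the factor $e^{h+1}(k_1k_2)^{1/2}=e^3((k-l)l)^{1/2}\le e^3 k$, and the Lemma \ref{lema0} coefficient is again controlled by \eqref{eq stirling} by something of order $k^{1/2}$; in both cases $C_k$ is at most a fixed polynomial in $k$.

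Next I would exploit the gap between $\alpha$ and $\rho$. Set $\beta$ with $\alpha<\beta<\rho$. Running the Lemma \ref{lema1} estimate with $\beta$ in place of $\alpha$ gives
$\beta^k p_A(\delta_w\circ\tfrac{d^kf}{k!}) \le C_k\|w\|^k p_{\tilde A}(f)$
(the intermediate set is $\bigcup_{x\in A}B_\beta(x)\subset\tilde A$, so it is still dominated by $p_{\tilde A}(f)$). Hence
$\alpha^k p_A(\delta_w\circ\tfrac{d^kf}{k!}) \le (\alpha/\beta)^k C_k\|w\|^k p_{\tilde A}(f)$,
and since $C_k$ grows only polynomially in $k$ while $(\alpha/\beta)^k\to 0$ geometrically, the quantity $\sup_{k}(\alpha/\beta)^k C_k =: C$ is finite and depends only on $\alpha$ and $\rho$ (through $\beta$, which we fix as the midpoint). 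This yields $(i)$; for $(ii)$ the identical argument with the $h=2$ version of the constant gives $\alpha^k p_A(\delta_{(v^{k-l},w^l)}\circ\tfrac{d^kf}{k!})\le C\|v\|^{k-l}\|w\|^l p_{\tilde A}(f)$. The membership of $\delta_w\circ\tfrac{d^kf}{k!}$ and $\delta_{(v^{k-l},w^l)}\circ\tfrac{d^kf}{k!}$ in $\hbu(X)$ is already furnished by Lemma \ref{lema1}. The $E''$ case is identical, using the $AB$-closed and regular clause of Lemmas \ref{lema0} and \ref{lema1} and noting $\|AB(Q)\|=\|Q\|$ so no extra constants enter.

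The only mildly delicate point — and the one I would present carefully rather than wave through — is the verification that $C_k$ is genuinely only \emph{polynomially} large in $k$, i.e.\ that no hidden factor like $\binom{m}{k}$ or a sum over $m$ contributes $k$-dependence to the final constant. This is why the proof of Lemma \ref{lema1} is organized so that the binomial coefficients and the tail sum $\sum_{m\ge k}\binom{m}{k}s^{m-k}\|Q_m\|$ reassemble (after dropping $\sqrt{(m-k)/m}\le 1$ and extending to $\sum_{j\ge 0}\alpha^j\sum_{m\ge j}\binom{m}{j}s^{m-j}\|Q_m\|$) into exactly $p_{s+\alpha}^{x_0}(f)$; the point is that this reassembly is uniform in $k$, so the $k$-dependence is confined entirely to the explicit prefactor $e^2 k^{1/2}$ (resp.\ $e^3 k$) times the Lemma \ref{lema0} coefficient, both of which are $O(k)$. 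Once that is in hand, the geometric-vs-polynomial trade-off closes the argument.
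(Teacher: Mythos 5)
Your proof is correct and follows essentially the same route as the paper's: re-run the estimate from Lemma \ref{lema1}, observe that the $k$-dependent prefactor grows only polynomially in $k$, and absorb it into the geometric margin between $\alpha$ and $\rho$ (the paper implements this by choosing $c_\varepsilon$ with $e^3 j\le c_\varepsilon(1+\varepsilon)^j$ and $(1+\varepsilon)\alpha<\rho$, which is exactly your $(\alpha/\beta)^k$ trade-off with $1+\varepsilon=\beta/\alpha$). The only cosmetic slip is that the factor $e^{h+1}(k_1\cdots k_h)^{1/2}$ already \emph{is} the Stirling bound on the Lemma \ref{lema0} coefficient rather than an extra factor multiplying it, but this only makes your constant more generous and does not affect the argument.
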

\begin{proof}
 We prove $(ii)$ for $v,w\in E''$. Let $\varepsilon>0$ such that $\alpha(1+\varepsilon)<\rho$ and let $B_s(x_0)\subset A$.
By the bound obtained in Lemma \ref{lema1} for $h=2$ and $\varphi=\delta_{(v^{k-l},w^l)}$, we have
\begin{eqnarray*}
\alpha^kp_s^{x_0}(\varphi\circ\frac{d^kf}{k!})
& \le & e^3((k-l)l)^{\frac12}\|w\|^{l}\|v\|^{k-l}\alpha^k\sum_{m\ge k}\binom{m}{k}s^{m-k}\sqrt{\frac{m-k}{m}}\|Q_m\|_{\u_m(E)} \\
& \le & c_\varepsilon (1+\varepsilon)^{k}\|w\|^{l}\|v\|^{k-l}\alpha^k\sum_{m\ge k}\binom{m}{k}s^{m-k}\|Q_m\|_{\u_m(E)} \\
& \le & c_\varepsilon\|w\|^{l}\|v\|^{k-l} \sum_{j=0}^\infty(1+\varepsilon)^j\alpha^j\sum_{m\ge j}\binom{m}{j}s^{m-j}\|Q_m\|_{\u_m(E)} \\
& = & c_\varepsilon\|w\|^{l}\|v\|^{k-l}p_{s+(1+\varepsilon)\alpha}^{x_0}(f)\le c_\varepsilon\|w\|^{l}\|v\|^{k-l}p_{\tilde A}(f),
\end{eqnarray*}
where $c_\varepsilon$ is chosen so that $e^3j\le c_\varepsilon (1+\varepsilon)^{j}$ for every $j\in\mathbb N$.
\end{proof}

We proved in the previous section that if $\u$ is multiplicative with constants as in $(\ref{constantes})$, then $\hbu(X)$ is a Fr\'echet algebra. We denote by $\mbu(X)$ its spectrum, that is, the set of non-zero multiplicative and continuous linear functionals on $\hbu(X)$.
Note that evaluations at
points of $X$ are in $\mbu(X)$.
Following \cite{AroColGam91,DinVen04}, we define $\pi:\mbu(X)\to E''$ by $\pi(\varphi)(x')=\varphi(x'\circ q)$. Also, for $\varphi\in M_{b\u}(X)$ and $A$ an open $X$-bounded set, we will write $\varphi\prec A$ whenever there is some $c>0$ such that $\varphi(f)\le cp_A(f)$ for every $f\in\hbu(X)$.

\begin{lemma}\label{hbudex Riemann lema1}
Let $\u$ be a multiplicative holomorphy type with constants as in (\ref{constantes}). Let $\varphi\in M_{b\u}(X)$ and $A$ an open $X$-bounded set such that $\varphi\prec A$. Let $w\in E$ with $\|w\|<d_X(A)$. Then $\varphi^w$
belongs to $M_{b\u}(X)$, where $$\varphi^w (f):=\sum_{n=0}^\infty\varphi(\frac{d^nf(\cdot)}{n!}(w)).$$
Moreover, $\pi(\varphi^w)=\pi(\varphi)+w$. If $\u$ is also $AB$-closed and regular at $E$ then the above statements hold for $w\in E''$ with $\|w\|<d_X(A)$, where,
$$
\varphi^w (f):=\sum_{n=0}^\infty\varphi(AB(\frac{d^nf(\cdot)}{n!})(w)).
$$
\end{lemma}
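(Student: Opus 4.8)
The plan is to verify, in order, that $\varphi^w$ is well defined, linear, multiplicative, continuous, and then compute $\pi(\varphi^w)$. I would work throughout with $w\in E''$ and $\varphi\prec A$, so that $\|w\|<d_X(A)$; pick $\alpha,\rho$ with $\|w\|<\alpha<\rho<d_X(A)$ and set $\tilde A=\bigcup_{x\in A}B_\rho(x)$, an open $X$-bounded set by Lemma \ref{lema B_s+delta}. The key quantitative input is Corollary \ref{coro lema1}$(i)$: for every $n$, the function $\delta_w\circ\frac{d^nf}{n!}=AB(\frac{d^nf(\cdot)}{n!})(w)$ lies in $\hbu(X)$ with $\alpha^n p_A\big(\delta_w\circ\frac{d^nf}{n!}\big)\le C\|w\|^n p_{\tilde A}(f)$. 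Since $\varphi\prec A$, say $\varphi(g)\le c\,p_A(g)$, we get $|\varphi(\delta_w\circ\frac{d^nf}{n!})|\le cC(\|w\|/\alpha)^n p_{\tilde A}(f)$, and because $\|w\|/\alpha<1$ the series defining $\varphi^w(f)$ converges absolutely, with $|\varphi^w(f)|\le \frac{cC}{1-\|w\|/\alpha}p_{\tilde A}(f)$. This simultaneously establishes that $\varphi^w$ is well defined and that it is continuous (indeed $\varphi^w\prec\tilde A$), and linearity is immediate from linearity of $\varphi$ and of the differentials.

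The main obstacle will be multiplicativity: showing $\varphi^w(fg)=\varphi^w(f)\varphi^w(g)$. The natural approach is the Leibniz-type identity for differentials of a product, namely $\frac{d^n(fg)(x)}{n!}=\sum_{j=0}^n\frac{d^jf(x)}{j!}\frac{d^{n-j}g(x)}{(n-j)!}$, combined with the fact that the Aron–Berner extension of a product of homogeneous polynomials is the product of the extensions — here is where $AB$-closedness and regularity of $\u$ are used, to guarantee $AB(\frac{d^n(fg)}{n!})(w)=\sum_{j}AB(\frac{d^jf}{j!})(w)\,AB(\frac{d^{n-j}g}{(n-j)!})(w)$ as an identity in $\hbu(X)$ evaluated along $w$. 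Granting this, $\varphi^w(fg)=\sum_n\varphi\big(\sum_{j=0}^n (\delta_w\circ\tfrac{d^jf}{j!})(\delta_w\circ\tfrac{d^{n-j}g}{(n-j)!})\big)$; using that $\varphi$ is multiplicative on $\hbu(X)$ this becomes $\sum_n\sum_{j=0}^n \varphi(\delta_w\circ\tfrac{d^jf}{j!})\,\varphi(\delta_w\circ\tfrac{d^{n-j}g}{(n-j)!})$, which is the Cauchy product of the two absolutely convergent series $\varphi^w(f)$ and $\varphi^w(g)$, hence equals $\varphi^w(f)\varphi^w(g)$. The absolute convergence needed to justify rearranging into a Cauchy product follows from the same estimate as above applied to $f$ and $g$ separately. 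One should also check $\varphi^w\ne 0$: evaluating at the constant function $1$ gives $\varphi^w(1)=\varphi(1)=1$, so $\varphi^w\in M_{b\u}(X)$.

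Finally, for $\pi(\varphi^w)=\pi(\varphi)+w$, I would evaluate $\varphi^w$ on functions of the form $x'\circ q$ with $x'\in E'$. Since $x'\circ q$ is affine along each chart, its differentials vanish beyond order one: $\frac{d^0(x'\circ q)(x)}{0!}=x'(q(x))$ and $\frac{d^1(x'\circ q)(x)}{1!}=x'\circ(dq)$, whose Aron–Berner extension evaluated at $w$ returns $x''(w)=w(x')$ (identifying the derivative of $q$ with the identity via the local homeomorphism), and all higher terms are zero. Hence $\varphi^w(x'\circ q)=\varphi(x'\circ q)+\varphi(\text{constant }w(x'))=\pi(\varphi)(x')+w(x')$, i.e. $\pi(\varphi^w)(x')=(\pi(\varphi)+w)(x')$ for all $x'\in E'$, which is the claim. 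The remaining assertion for $w\in E$ rather than $E''$ is the special case obtained by dropping the $AB$ and using Corollary \ref{coro lema1}$(i)$ directly, with no regularity hypothesis needed.
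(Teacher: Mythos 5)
Your proof is correct and follows essentially the same route as the paper: the well-definedness and continuity of $\varphi^w$ (with $\varphi^w\prec\tilde A$) come from exactly the estimate of Corollary \ref{coro lema1} that the paper uses. For multiplicativity and the identity $\pi(\varphi^w)=\pi(\varphi)+w$ the paper simply cites \cite[p.551]{AroGalGarMae96}; your Leibniz-plus-Cauchy-product argument and the evaluation on $x'\circ q$ are precisely the arguments being referenced there, so you have just filled in details the paper outsources.
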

\begin{proof}
We prove the case $w\in E''$.
 Suppose that $B_{s}(x_0)\subset A$.
Let  $\|w\|<\alpha<\rho<d_X(A)$. We can take $\varepsilon>0$,
such that
$(1+\varepsilon)\alpha<\rho$.
By Corollary \ref{coro lema1}, for the open
$X$-bounded set $\tilde A=\bigcup_{x\in A}B_\rho(x)$, we have
\begin{equation}\label{equat}
\alpha^kp_A\Big(AB(\frac{d^kf(\cdot)}{k!})(w)\Big)\le c_\varepsilon
\|w\|^kp_{\tilde A}(f).
\end{equation}
Thus,  the series
$\sum_kp_A(AB(\frac{d^kf(\cdot)}{k!})(w))$ is convergent. Then,
$$
\sum_{k=0}^\infty|\varphi(AB(\frac{d^kf(\cdot)}{k!})(w))|\le c\sum_{k=0}^\infty p_A(AB(\frac{d^kf(\cdot)}{k!})(w))\le \frac{\alpha c c_\varepsilon}{\alpha-\|w\|} p_{\tilde A}(f)<\infty.
$$
  Therefore $\varphi^w$ is continuous and $\varphi^w\prec \tilde A$.
The multiplicativity of $\varphi^w$ and the last assertion follow as
in \cite[p.551]{AroGalGarMae96}.
\end{proof}

In the case of entire functions $\varphi^w$ may be defined translating functions by $w$, see \cite{CarDimMur,Din99}. We show next that, this is also true for arbitrary Riemann domains when we complete $\hbu(X)$ with respect to the topology given by the norm $p_A$, if $\varphi\prec A$. Given an open $X$-bounded set $A$ and $w\in E''$ with $\|w\|<d_X(A)$, we define $\tilde\tau_w(f)$ on $A$ as $\tilde\tau_w(f)(x)=AB(f\circ (q|_{B_x})^{-1})(q(x)+w),$ where $B_x$ denotes the ball $B_{d_X(x)}(x)$.
\begin{lemma}\label{lema tau_w}
Let $\u$ be a multiplicative holomorphy type with constants as in (\ref{constantes}) which is regular at $E$ and $AB$-closed. Let $A$ be an open $X$-bounded set, $\varphi\in M_{b\u}(X)$ such that $\varphi\prec A$ and $w\in E''$ with $\|w\|<d_X(A)$. Then:
\begin{itemize}
\item[($a$)] the series $\sum_{n=0}^\infty AB(\frac{d^nf(\cdot)}{n!})(w)$ converges in  $\overline{\hbu(X)}^{p_{A}}$ to $\tilde\tau_w(f)$ and the application $\tilde\tau_w:\hbu(X)\to\overline{\hbu(X)}^{p_{A}}$ is continuous,

\item[($b$)] $\varphi$ may be extended to $\overline{\hbu(X)}^{p_{A}}$ and $\varphi^w(f)=\varphi(\tilde\tau_wf)$.
\end{itemize}
\end{lemma}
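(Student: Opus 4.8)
The plan is to deduce everything from the estimate in Corollary \ref{coro lema1}$(i)$ together with the identity theorem for holomorphic functions. First I would fix $\|w\|<\alpha<\rho<d_X(A)$ and set $\tilde A=\bigcup_{x\in A}B_\rho(x)$, which is an open $X$-bounded set by Lemma \ref{lema B_s+delta}. Corollary \ref{coro lema1}$(i)$ gives, for every $k$, the bound $\alpha^k p_A\big(AB(\tfrac{d^kf(\cdot)}{k!})(w)\big)\le C\|w\|^k p_{\tilde A}(f)$, so the series $\sum_{k=0}^\infty AB(\tfrac{d^kf(\cdot)}{k!})(w)$ is absolutely convergent in the normed space $(\hbu(X),p_A)$, hence converges in the Banach space $\overline{\hbu(X)}^{p_A}$ to some element $g$, and moreover $p_A(g)\le \frac{\alpha C}{\alpha-\|w\|}p_{\tilde A}(f)$; this is exactly the continuity statement for $\tilde\tau_w$ once we identify $g$ with $\tilde\tau_w(f)$. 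Each partial sum $\sum_{k=0}^{n}AB(\tfrac{d^kf(\cdot)}{k!})(w)$ is in $\hbu(X)$ by Lemma \ref{lema1} applied with $h=1$ (the $AB$-closed, regular case), so the limit genuinely lives in the completion.

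The point that requires care is showing that the limit $g$ equals the function $\tilde\tau_w(f)$ defined pointwise by $\tilde\tau_w(f)(x)=AB\big(f\circ(q|_{B_x})^{-1}\big)(q(x)+w)$. To see this, I would work locally: fix $x_0\in A$ with $B_s(x_0)\subset A$ and write $Q_m=\tfrac{d^mf}{m!}(x_0)$. On $B_s(x_0)$ we have the Taylor-type expansion $\tfrac{d^kf}{k!}(y)=\sum_{m\ge k}\binom{m}{k}(Q_m)_{(q(y)-q(x_0))^{m-k}}$ (as in the proof of Lemma \ref{lema1}), and applying $AB$ and evaluating the $k$-homogeneous polynomial $AB(\tfrac{d^kf}{k!}(y))$ at $w$ gives a convergent double series. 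Summing first over $k$ and using that $AB$ commutes with the relevant operations (Arens extensions of products of linear forms, symmetry — here the regularity and $AB$-closedness of $\u$ enter), a reindexing identifies $\sum_{k}AB(\tfrac{d^kf(\cdot)}{k!}(y))(w)$ with $AB(f\circ(q|_{B_{x_0}})^{-1})(q(y)+w)$ for $y$ in a neighbourhood of $x_0$ on which the relevant balls are comparable; this is the finite-dimensional / scalar computation that $\sum_k \tfrac{1}{k!}\,d^k\!\big[\widehat{f}\big](q(y))(w)=\widehat{f}(q(y)+w)$ transported through the Aron–Berner machinery. Since $g$ and $\tilde\tau_w(f)$ are both given by the convergent series $\sum_k AB(\tfrac{d^kf(\cdot)}{k!})(w)$ evaluated pointwise, and the convergence in $p_A$ dominates pointwise convergence on $A$, they coincide; this proves $(a)$.

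For $(b)$: since $\varphi\prec A$ there is $c>0$ with $|\varphi(h)|\le c\,p_A(h)$ for all $h\in\hbu(X)$, so $\varphi$ extends uniquely by continuity to a bounded linear functional on $\overline{\hbu(X)}^{p_A}$, still denoted $\varphi$. Applying this extended $\varphi$ to the convergent series from $(a)$ and using continuity to pass $\varphi$ through the sum yields
$$
\varphi(\tilde\tau_w f)=\varphi\Big(\sum_{n=0}^\infty AB(\tfrac{d^nf(\cdot)}{n!})(w)\Big)=\sum_{n=0}^\infty\varphi\big(AB(\tfrac{d^nf(\cdot)}{n!})(w)\big)=\varphi^w(f),
$$
which is the desired identity. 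The main obstacle I anticipate is the bookkeeping in part $(a)$ — rigorously justifying the interchange of the two summations and checking that the pointwise Arens-extension identity $\sum_k AB(\tfrac{d^kf}{k!})(w)=AB(f\circ(q|_{B_x})^{-1})(q(\cdot)+w)$ holds on a full neighbourhood, which is where $\u$ being regular at $E$ and $AB$-closed is genuinely used; the rest is a routine consequence of the norm estimate in Corollary \ref{coro lema1} and the definition of $\varphi\prec A$.
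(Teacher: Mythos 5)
Your proposal is correct and follows essentially the same route as the paper: the norm estimate from Corollary \ref{coro lema1} (already packaged in Lemma \ref{hbudex Riemann lema1}) gives absolute convergence of the series in $p_A$ and the continuity of $\tilde\tau_w$, the pointwise identification of the limit with $\tilde\tau_w(f)$ on $A$ comes from the absolute convergence of the Taylor series of $f$ at each $x\in A$ on $B_r(x)$ for $r<d_X(A)$, and $(b)$ is the continuous extension of $\varphi$ passed through the sum. The only difference is cosmetic: the paper treats the pointwise identity at each $x\in A$ as immediate from the definition of the Aron--Berner extension of $f\circ(q|_{B_x})^{-1}$ via its Taylor series, so your double-series reindexing at nearby points $y$ is more work than is actually needed.
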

\begin{proof}
($a$)
We have already proved in  Lemma \ref{hbudex Riemann lema1} that the series
$\sum_np_A(AB(\frac{d^nf(\cdot)}{n!})(w))$ is convergent and that $p_A(\sum_n
AB(\frac{d^nf(\cdot)}{n!})(w))\le\frac{\alpha
cc_\varepsilon}{\alpha-\|w\|} p_{\tilde A}(f)$.

The equality
$\tilde\tau_w(f)(x)=\sum_{n=0}^\infty AB(\frac{d^nf(x)}{n!})(w)$ is
clear for each $x\in A$ since the Taylor series of $f$ at $x$
converges absolutely on $B_{r}(x)$, for each $r<d_X(A)$.

($b$) The first assertion is immediate since $\varphi(f)\le cp_{A}(f)$ for every $f\in\hbu(X)$.
The second assertion, is a consequence of the equality $\tilde\tau_w(f)=\sum_{n=0}^\infty AB(\frac{d^nf(\cdot)}{n!})(w)$ as function in $\overline{\hbu(X)}^{p_{A}}$, the continuity of $\varphi$ with respect to the norm $p_A$ and the definition of $\varphi^w$.
\end{proof}
\begin{lemma}\label{lema phi^v+w}
Let $\u$ be a multiplicative holomorphy type with constants as in (\ref{constantes}) which is regular at $E$ and $AB$-closed. Let $A$ be an open $X$-bounded set, $\varphi\in M_{b\u}(X)$ such that $\varphi\prec A$ and $v,w\in E''$ with
$\|v\|+\|w\|< d_X(A)$. Then:
\begin{itemize}
\item[($a$)]  $\tilde\tau_w\tilde\tau_vf=\tilde\tau_{w+v}f$ for every $f\in\hbu(X)$, 

\item[($b$)] $(\varphi^w)^v$ is a well defined character in $\mbu(X)$
and $(\varphi^w)^v=\varphi^{w+v}$.
\end{itemize}
\end{lemma}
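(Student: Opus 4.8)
The plan is to reduce everything to the already-established machinery for translations. For part $(a)$, I would fix $f\in\hbu(X)$ and an open $X$-bounded set $A$ with $\varphi\prec A$, and work inside the Banach space $\overline{\hbu(X)}^{p_A}$. The key point is that for a ball $B_r(x)\subset X$ with $r<d_X(A)$ and $\|v\|+\|w\|<r$, the equality $\tilde\tau_w\tilde\tau_v f = \tilde\tau_{w+v}f$ is, pointwise on the relevant balls, nothing but the classical additivity of the Aron--Berner extension under translations, i.e. $AB(g)(z+v+w)=AB(\tilde\tau_v g)(z+w)$; this is essentially the argument in \cite[p.551]{AroGalGarMae96}. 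I would first check that $\tilde\tau_v f$ again lies in $\hbu(X)$ (or at least in a space on which $\tilde\tau_w$ makes sense), using Lemma \ref{lema tau_w}$(a)$: the series $\sum_n AB(\tfrac{d^nf(\cdot)}{n!})(v)$ converges to $\tilde\tau_v f$ in $\overline{\hbu(X)}^{p_{A'}}$ for a slightly shrunk $X$-bounded set $A'$ with $\|w\|<d_X(A')$, so $\tilde\tau_w$ can be applied to it. Then expand $\tilde\tau_w\tilde\tau_v f$ as a double series $\sum_{m,n}AB(\tfrac{d^m}{m!}[AB(\tfrac{d^nf(\cdot)}{n!})(v)])(w)$, use the bounds from Corollary \ref{coro lema1} to justify absolute convergence and rearrangement, and collect terms to recognize $\tilde\tau_{w+v}f$.

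For part $(b)$, once $(a)$ is in hand the strategy is: by Lemma \ref{hbudex Riemann lema1}, $\varphi^w\in\mbu(X)$ and $\varphi^w\prec\tilde A$ where $\tilde A=\bigcup_{x\in A}B_\rho(x)$ for a suitable $\rho$ with $\|w\|<\rho<d_X(A)$. I would choose $\rho$ and an auxiliary radius so that $\|v\|<d_X(\tilde A\cap\{\textrm{controlled piece}\})$, more precisely so that $\|v\|$ is smaller than the distance-to-the-border associated with the $X$-bounded set governing $\varphi^w$; since $\|v\|+\|w\|<d_X(A)$ this is arranged by picking $\rho$ close enough to $\|w\|$. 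Then Lemma \ref{hbudex Riemann lema1} applied to $\varphi^w$ gives that $(\varphi^w)^v\in\mbu(X)$ and $\pi((\varphi^w)^v)=\pi(\varphi^w)+v=\pi(\varphi)+w+v$. To get the identification $(\varphi^w)^v=\varphi^{w+v}$, I would use the $\tilde\tau$ description from Lemma \ref{lema tau_w}$(b)$: on the relevant completion, $\varphi^w(f)=\varphi(\tilde\tau_w f)$ and $(\varphi^w)^v(f)=\varphi^w(\tilde\tau_v f)=\varphi(\tilde\tau_w\tilde\tau_v f)$, while $\varphi^{w+v}(f)=\varphi(\tilde\tau_{w+v}f)$; these agree by part $(a)$.

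The main obstacle I anticipate is the bookkeeping of $X$-bounded sets and radii so that every invocation of Lemma \ref{lema tau_w} and Lemma \ref{hbudex Riemann lema1} is legitimate, i.e. that after one translation by $w$ the character $\varphi^w$ is still dominated by a $p_B$ with $d_X(B)>\|v\|$, and that $\tilde\tau_v f$ lands in a space to which $\tilde\tau_w$ (or, symmetrically, the extended $\varphi$) applies. This is purely a matter of choosing $\rho$ close to $\|w\|$ and exploiting $\|v\|+\|w\|<d_X(A)$, together with Lemma \ref{lema B_s+delta} to guarantee that the enlarged balls exist; but it needs to be done carefully. The second, more technical, obstacle is justifying the interchange of the two summations defining $\tilde\tau_w\tilde\tau_v f$; here the geometric-type estimates $\alpha^k p_A(AB(\tfrac{d^kf(\cdot)}{k!})(w))\le c_\varepsilon\|w\|^k p_{\tilde A}(f)$ from Corollary \ref{coro lema1}, applied twice, give a dominating convergent double series of the form $\sum_{m,n}\binom{m+n}{m}\|w\|^m\|v\|^n\alpha^{-m-n}p_{\tilde A}(f)$ (up to constants), which converges precisely because $\|v\|+\|w\|<\alpha$ can be arranged; then Fubini/Tonelli for series finishes the rearrangement. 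Everything else is routine, following the scalar-valued and entire-function cases in \cite{AroGalGarMae96,CarDimMur}.
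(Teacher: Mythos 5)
Your proposal is correct and follows essentially the same route as the paper: part $(a)$ via the double-series expansion $\tilde\tau_w\tilde\tau_v f=\sum_n\sum_k AB\bigl(\frac{d^k g_n(\cdot)}{k!}\bigr)(w)$ with $g_n=AB\bigl(\frac{d^nf(\cdot)}{n!}\bigr)(v)$, regularity to identify the terms with $\binom{k+n}{n}\delta_{(v^n,w^k)}\circ\frac{d^{k+n}f}{(k+n)!}$, and the Corollary \ref{coro lema1} bounds to dominate by the convergent binomial series $\sum_m\bigl(\frac{\|v\|+\|w\|}{\tilde\alpha}\bigr)^m$ and rearrange; part $(b)$ via the same radius bookkeeping and the chain $(\varphi^w)^v(f)=\varphi^w(\tilde\tau_vf)=\varphi(\tilde\tau_w\tilde\tau_vf)=\varphi(\tilde\tau_{w+v}f)=\varphi^{w+v}(f)$. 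The two anticipated obstacles you name are exactly the ones the paper resolves, and in the same way.
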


\begin{proof}
($a$) We must prove that $\tilde\tau_w\tilde\tau_vf(x)=\tilde\tau_{w+v}f(x)$ for $x\in A$. Write $g_n$ for $AB(\frac{d^nf(\cdot)}{n!})(v)$. Then \begin{equation}\label{tauw tauv}
\tilde\tau_w\tilde\tau_v(f)=\sum_n\tilde\tau_w(g_n)=\sum_n\sum_kAB(\frac{d^kg_n(\cdot)}{k!})(w).
\end{equation}
Since $\u$ is regular at $E$, we may proceed as in \cite[p.552]{AroGalGarMae96} to show that
$$
\frac{d^kg_n(\cdot)}{k!}=\binom{k+n}{n}AB\Big(\frac{d^{k+n}f(\cdot)}{(k+n)!}\Big)_{v^n}.
$$
Thus again by regularity,
$$
AB\Big(\frac{d^kg_n(\cdot)}{k!}\Big)(w)=\binom{k+n}{n}AB\Big(\frac{d^{k+n}f(\cdot)}{(k+n)!}\Big)^\vee(v^n,w^k)=\binom{k+n}{n}\delta_{(v^n,w^k)}\circ\frac{d^{k+n}f}{(k+n)!}.
$$
Let $\|v\|+\|w\|<\tilde\alpha<\tilde\rho<d_X(A)$. By Corollary \ref{coro lema1}, if $A^\sharp=\bigcup_{x\in A}B_{\tilde\rho}(x)$, then there exists a constant $C>0$  such that,
\begin{eqnarray*}
\sum_{n\ge 0}\sum_{k\ge 0}\binom{k+n}{n}p_A\big(\delta_{(v^n,w^k)}\circ\frac{d^{k+n}f}{(k+n)!}\big) &\le&  C\sum_{n\ge 0}\sum_{k\ge 0}\binom{k+n}{n}\frac{\|v\|^n\|w\|^{k}}{\tilde\alpha^{n+k}}p_{A^\sharp}(f) \\
& = & C p_{A^\sharp}(f)\sum_{m\ge0}^\infty\Big(\frac{\|v\|+\|w\|}{\tilde\alpha}\Big)^m <\infty.
  \end{eqnarray*}
Therefore we may reverse the order of summation in (\ref{tauw tauv}) to obtain
\begin{eqnarray*}
\tilde\tau_w\tilde\tau_v(f) &=& \sum_{n\ge 0}\sum_{l\ge n}\binom{l}{n}AB\big(\frac{d^{l}f(\cdot)}{l!}\big)^\vee(v^n,w^{l-n})
= \sum_{l\ge 0}\sum_{n=0}^l\binom{l}{n}AB\big(\frac{d^{l}f(\cdot)}{l!}\big)^\vee(v^n,w^{l-n}) \\
&=& \sum_{l\ge 0}AB\big(\frac{d^{l}f(\cdot)}{l!}\big)(v+w) = \tilde\tau_{v+w}(f).
\end{eqnarray*}

($b$)
We continue using the notation of part ($a$).
First note that $\tilde\alpha-\|v\|$ (resp. $\tilde\rho-\|v\|$) may play the role of $\alpha$ (resp. $\rho$) in the proof of Lemma \ref{hbudex Riemann lema1}, thus
if $\tilde A=\bigcup_{x\in A}B_{\tilde\rho-\|v\|}(x)$,
then  $\varphi^w$ and $\tilde\tau_w$ may be continuously extended to $\Big(\overline{\hbu(X)}^{p_{\tilde A}},p_{\tilde
A}\Big)$. Moreover, the formula
$\varphi^w(f)=\varphi(\tilde\tau_wf)$ holds for every $f$ in
$\overline{\hbu(X)}^{p_{\tilde A}}$.

Second, since $\varphi^w\prec\tilde A$ and $d_X(\tilde A)\ge
d_X(A)-(\tilde\rho-\|v\|)>d_X(A)-\|w\|>\|v\|$, then by of Lemma \ref{hbudex Riemann lema1},  $(\varphi^w)^v$ is well defined
and by part ($a$) of Lemma \ref{lema tau_w}, $\tilde\tau_v:\hbu(X)\to\overline{\hbu(X)}^{p_{\tilde A}}$ is
continuous.

Therefore, for every $f\in\hbu(X)$, we have that
$(\varphi^w)^v(f)=\varphi^w(\tilde\tau_vf)=\varphi(\tilde\tau_w\tilde\tau_vf)=\varphi(\tilde\tau_{w+v}f)=\varphi^{w+v}(f)$.
\end{proof}
The equality  $(\varphi^w)^v=\varphi^{w+v}$ in the above lemma is the key property to show Theorem \ref{mbu dom riemann}. 
Indeed, once this equality is proved, the  rest of the proof can be almost entirely adapted from \cite[Theorem 2.2 and Corollary 2.4]{AroGalGarMae96}. We just point out the only difference.
\begin{proof}[Proof of Theorem \ref{mbu dom riemann}]
 For $\varphi\in\mbu(X)$, $\varphi\prec A$ and $0<\varepsilon<d_X(A)$, define $V_{\varphi,\varepsilon}=\{\varphi^w:\,w\in E'',\, \|w\|<\varepsilon\}$.  Then the collection
$\{V_{\varphi,\varepsilon}:\varphi\in M_{b\u}(X),\, \varepsilon>0\}$ define a basis for a
Hausdorff topology in $M_{b\u}(X)$. The fact that it is a basis of a topology follows as in \cite[Theorem 2.2]{AroGalGarMae96}. We prove that it is Hausdorff.  Let $\varphi\ne\psi\in\mbu(X)$ and suppose that $\pi(\varphi)\ne\pi(\psi)$. Let $A,D$ be open $X$-bounded sets such that $\varphi\prec A$ and $\psi\prec D$, and take $r<\min\{d_X(A),d_X(D)\}/2$. We claim that $V_{\varphi,r}\cap V_{\psi,r}=\emptyset$. Indeed, if $\|v\|,\|w\|<r$ are such that $\varphi^w=\psi^v$, then $\pi(\varphi)+w=\pi(\psi)+v$ and thus $v=w$. Moreover, by Lemma \ref{lema phi^v+w} $(b)$, $\varphi=(\varphi^v)^{(-v)}=(\psi^v)^{(-v)}=\psi.$
The case $\pi(\varphi)=\pi(\psi)$ follows as in \cite[Theorem 2.2]{AroGalGarMae96}. Now, we may finish the proof of the theorem proceeding as in \cite[Corollary 2.4]{AroGalGarMae96}.
\end{proof}

\section{Holomorphic extensions}\label{section holo extensions}

In this section and in the next one we are concerned with analytic continuation.
We show first that the canonical extensions to the spectrum are holomorphic and then we characterize the $\hbu$-envelope of holomorphy of a Riemann domain in terms of the spectrum.
\begin{proposition}\label{gelfand holo}
Let $\u$ be a multiplicative holomorphy type with constants as in (\ref{constantes}) which is regular at $E$ and $AB$-closed. For each $f\in\hbu(X)$, its Gelfand transform $\tilde f$ is holomorphic on $\mbu(X)$.
\end{proposition}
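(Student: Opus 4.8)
The plan is to read $\tilde f$ in the local charts of the Riemann domain $(\mbu(X),\pi)$ built in Theorem \ref{mbu dom riemann} and to identify it there with a power series of continuous homogeneous polynomials on $E''$. Fix $\varphi\in\mbu(X)$ and an open $X$-bounded set $A$ with $\varphi\prec A$. As in the proof of Theorem \ref{mbu dom riemann}, for every $0<\varepsilon<d_X(A)$ the map $w\mapsto\varphi^w$ is injective on $\{w\in E'':\ \|w\|<\varepsilon\}$ (because $\pi(\varphi^w)=\pi(\varphi)+w$ by Lemma \ref{hbudex Riemann lema1} recovers $w$), its image $V_{\varphi,\varepsilon}$ is a basic open set, and $\pi$ identifies $V_{\varphi,\varepsilon}$ with $B_\varepsilon(\pi(\varphi))\subset E''$. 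Since such sets form a basis of the topology, it suffices to show that $w\mapsto\tilde f(\varphi^w)=\varphi^w(f)$ is holomorphic on $B_\varepsilon(0)$. By the definition of $\varphi^w$ in Lemma \ref{hbudex Riemann lema1}, $\varphi^w(f)=\sum_{n=0}^\infty P_n(w)$ with $P_n(w):=\varphi\big(AB(\tfrac{d^nf(\cdot)}{n!})(w)\big)$, so the argument splits into two points: each $P_n$ is a continuous $n$-homogeneous polynomial on $E''$, and the series has radius of convergence at least $d_X(A)$.

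For the first point I would consider the map $\Phi_n\colon(E'')^n\to\hbu(X)$ given by $\Phi_n(w_1,\dots,w_n)=\delta_{(w_1,\dots,w_n)}\circ\tfrac{d^nf}{n!}$. By Lemma \ref{lema1} (the case $h=n$, all exponents equal to $1$) it takes values in $\hbu(X)$, and the argument in its proof shows it is a bounded $n$-linear map; it is symmetric because $\u$ is regular at $E$, so the Arens extension $\v{AB(\tfrac{d^nf(x)}{n!})}$ is symmetric for every $x\in X$. Composing with the continuous linear functional $\varphi$ yields a bounded symmetric $n$-linear form on $E''$ whose diagonal restriction is exactly $P_n$, hence $P_n\in\p^n(E'')$.

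For the second point I would invoke Corollary \ref{coro lema1}$(i)$: choose $\alpha,\rho$ with $\varepsilon<\alpha<\rho<d_X(A)$ and put $\tilde A=\bigcup_{x\in A}B_\rho(x)$; then there is a constant $C>0$ with $\alpha^n\,p_A\big(AB(\tfrac{d^nf(\cdot)}{n!})(w)\big)\le C\|w\|^n\,p_{\tilde A}(f)$ for all $n$ and all $w\in E''$. Combining this with $\varphi\prec A$, say $|\varphi(g)|\le c\,p_A(g)$, gives $\|P_n\|_{\p^n(E'')}\le cC\,p_{\tilde A}(f)\,\alpha^{-n}$, so $\sum_n P_n$ is a power series with radius of convergence at least $\alpha>\varepsilon$ and therefore defines a holomorphic function on $B_\alpha(0)\supset\overline{B_\varepsilon(0)}$, which coincides with $w\mapsto\varphi^w(f)$ by Lemma \ref{hbudex Riemann lema1}. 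Hence $\tilde f$ is holomorphic in every chart $V_{\varphi,\varepsilon}$, and therefore holomorphic on $\mbu(X)$.

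The delicate step I anticipate is the first one: checking that $P_n$ is a genuine continuous homogeneous polynomial on the bidual. This is precisely where the hypotheses enter — $AB$-closedness guarantees that $AB(\tfrac{d^nf}{n!})$ stays in the type so that the expressions above make sense, and regularity at $E$ makes the relevant Arens extensions symmetric, so that $\varphi\circ\Phi_n$ has a well-defined associated polynomial. Once this is established, the convergence estimate supplied by Corollary \ref{coro lema1} and the reduction to the charts of $\mbu(X)$ are routine.
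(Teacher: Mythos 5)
Your proposal is correct and follows essentially the same route as the paper: reduce to the charts $V_{\varphi,r}$, write $\varphi^w(f)$ as the series of the polynomials $w\mapsto\varphi(\delta_w\circ\frac{d^kf}{k!})$, use Lemma \ref{lema1} (multilinearity and membership in $\hbu(X)$ of $\delta_{(w_1,\dots,w_k)}\circ\frac{d^kf}{k!}$) together with Corollary \ref{coro lema1} to see these are continuous homogeneous polynomials on $E''$, and then use the same estimate to control the tail. The only cosmetic difference is that you bound the radius of convergence of the power series while the paper verifies uniform convergence of the partial sums on $rB_{E''}$; these are equivalent here.
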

\begin{proof}
Let $\varphi\in\mbu(X)$, $A$ an open $X$-bounded set such that $\varphi\prec A$ and $r<d_X(A)$. We prove that $\tilde f$ is holomorphic on $V_{\varphi,r}$, or equivalently that $\tilde f\circ\big(\pi|_{V_{\varphi,r}}\big)^{-1}$ is holomorphic on $\pi(V_{\varphi,r})=B_{E''}(\pi(\varphi),r)$. It suffices to show that it is uniform limit of polynomials on $rB_{E''}(\pi(\varphi))$. Note that for $\|w\|<r$,
$$
\tilde f\circ\big(\pi|_{V_{\varphi,r}}\big)^{-1}(\pi(\varphi)+w)=\tilde f(\varphi^w)=\varphi^w(f)=\sum_{k=0}^\infty\varphi(\delta_w\circ\frac{d^kf}{k!}).
$$
By Lemma \ref{lema1}, for $w_1,\dots,w_k\in E''$, $AB\big(\frac{d^kf}{k!}\big)^\vee(w_1,\dots,w_k)=\delta_{(w_1,\dots,w_k)}\circ\frac{d^kf}{k!}$ belongs to $\hbu(X)$ and clearly, $AB\big(\frac{d^kf}{k!}\big)^\vee$ is clearly $k$-linear. Thus $w\mapsto\delta_w\circ\frac{d^kf}{k!}$ is in $\p_a^k(E'',\hbu(X))$ (the space of algebraic $k$-homogeneous polynomials). It is also continuous since, by Corollary \ref{coro lema1}, for each open $X$-bounded  set $D$, and $\beta<d_X(D)$, there exists $C>0$ and an open $X$-bounded  set $\tilde D$ (which do not dependend on $k$) such that $\beta^kp_D(\delta_w\circ\frac{d^kf}{k!})\le C\|w\|^kp_{\tilde D}(f)<\infty.$

Therefore, if $Q_k(w)=\varphi(\delta_w\circ\frac{d^kf}{k!})$ then $Q_k$ is in $\p^k(E'')$. Now, for $\|w\|<r<\alpha<d_X(A)$ and using again Corollary \ref{coro lema1},
\begin{eqnarray*}
  \sup_{w\in rB_{E''}}\Big| \varphi^w(f)-\sum_{n=0}^mQ_n(w)\Big| &=& \sup_{w\in rB_{E''}}\Big| \sum_{k=m+1}^\infty \varphi(\delta_w\circ\frac{d^kf}{k!})\Big|\\
&\le& c\sup_{w\in rB_{E''}}\Big| \sum_{k=m+1}^\infty p_A(\delta_w\circ\frac{d^kf}{k!})\Big|\\
&\le& cC \sum_{k=m+1}^\infty \big(\frac{r}{\alpha}\big)^kp_{\tilde A}(f)\longrightarrow0\textrm{ as }m\to\infty.
\end{eqnarray*}
\end{proof}
\begin{remark}\rm\label{remark ext son de hu}
 We know that the canonical extensions to $\mbu(X)$ need not be in $\hbu(\mbu(X))$ (see \cite[Example 2.8]{CarMur} and \cite[Proposition 4.3.22]{Mur10}). We do not know whether these extensions belong to $\hu(\mbu(X))$.
When $\u$ is weakly differentiable (see Section \ref{section type u extensions}) and $f$ is a polynomial, then it is possible to show that its extension to $\mbu(X)$ is of type $\u$.
\end{remark}
\begin{definition}\label{def F-ext}\rm
Let $\mathcal F\subset H(X)$ and let $(Z,p)$ be another Riemann domain over $E$.
An \textbf{$\mathcal F$-extension} is a morphism $\tau:X\to Z$ such that for each $f\in\mathcal F$ there exists a unique function $\tilde f$ holomorphic on $Z$, such that $\tilde f\circ\tau=f$. If $\mathcal F=H(X)$, we call it a holomorphic extension of $X$.
\end{definition}
\begin{corollary}\label{mbu dom holo}
 Let $\u$ be a multiplicative holomorphy type with constants as in (\ref{constantes}) which is regular at $E$ and $AB$-closed. Then $\mbu(X)$ is a domain of holomorphy, that is, any holomorphic extension of $\mbu(X)$ is an isomorphism.
\end{corollary}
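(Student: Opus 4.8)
The plan is to deduce Corollary \ref{mbu dom holo} from Proposition \ref{gelfand holo} together with the basic structure of the spectrum established in Theorem \ref{mbu dom riemann}. Let $\tau:\mbu(X)\to (Z,p)$ be a holomorphic extension of $\mbu(X)$ in the sense of Definition \ref{def F-ext}; we must show $\tau$ is an isomorphism of Riemann domains over $E''$. The key point is that $\mbu(X)$ already carries ``enough'' holomorphic functions: every $f\in\hbu(X)$ has a Gelfand transform $\tilde f\in H(\mbu(X))$ by Proposition \ref{gelfand holo}, and more importantly the coordinate functions $x'\circ\pi$ (for $x'\in E''=(E')'$, or rather $x'\in E'$ so that $x'\circ q\in\hbu(X)$ and $\pi(\varphi)(x')=\varphi(x'\circ q)$) separate points of a fiber of $\pi$ and realize $\pi$ itself as a holomorphic map built from Gelfand transforms.

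First I would set up the standard diagram: since $\tau$ is a holomorphic extension, each $\tilde f\in H(\mbu(X))$ (for $f\in\hbu(X)$) extends uniquely to $g_f\in H(Z)$ with $g_f\circ\tau=\tilde f$. Applying this to the functions $f=x'\circ q$ with $x'\in E'$, one obtains a holomorphic map $\Pi:Z\to E''$ (defined by $\Pi(z)(x')=g_{x'\circ q}(z)$, using that a holomorphic family of scalar functions linear in $x'$ with controlled growth defines a map to $E''$) satisfying $\Pi\circ\tau=\pi$. Because $\tau$ is a morphism of Riemann domains, $p=\Pi'\circ\tau$ for the structure map $\Pi'$ of $Z$ wait — more carefully: $\tau$ being a morphism means $p\circ\tau=\pi$ where $p:Z\to E''$ is the spread; so in fact $\Pi=p$ already, and the content is that $\tau$ is locally a homeomorphism compatible with the spreads. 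The real work is to produce an inverse morphism $\sigma:Z\to\mbu(X)$. For $z\in Z$, define $\sigma(z)(f)=g_f(z)$ for $f\in\hbu(X)$; one checks this is linear, multiplicative (since $g_{fg}$ and $g_fg_g$ are both holomorphic extensions of $\widetilde{fg}=\tilde f\tilde g$, hence equal by uniqueness of the extension) and continuous (using that $p_A$-continuity of $\varphi$ near a point propagates along $\tau$, together with the local boundedness coming from $Z$ being spread over $E''$). So $\sigma(z)\in\mbu(X)$, $\sigma\circ\tau=\mathrm{id}$, and $\tau\circ\sigma=\mathrm{id}$ follows from the density/separation properties of Gelfand transforms on $Z$.

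The main obstacle I expect is the continuity of $\sigma(z)$ as an element of $\mbu(X)$, i.e.\ verifying that the character $f\mapsto g_f(z)$ is $p_A$-bounded for some $X$-bounded $A$ — equivalently, that the ``distance to the boundary'' data transfers correctly through $\tau$. This is precisely where one uses that $Z$ is a genuine Riemann domain over $E''$ (so near $z$ there is a ball $B_\varepsilon(z)$), that $\tau$ is a local homeomorphism, and that on $\tau(B_{\varepsilon'}(\varphi))=V_{\varphi,\varepsilon'}$ the functions $g_f$ agree with $\tilde f$, whose growth is controlled by Corollary \ref{coro lema1}; pulling a bounded neighbourhood of $z$ back and using that $\tau$ covers, one gets the desired bound $g_f(z)\le c\,p_{\tilde A}(f)$. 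The remaining verifications — that $\sigma$ is a morphism (it intertwines $p$ and $\pi$ since $p=\Pi\circ\sigma\circ\tau\cdots$, all built from the same coordinate functions) and that $\tau\circ\sigma=\mathrm{id}_Z$ (two holomorphic extensions of $X$ — wait, of $\mbu(X)$ — agreeing, so equal by the uniqueness clause in Definition \ref{def F-ext}, since $\mbu(X)$ is a domain of holomorphy for its own Gelfand functions by Proposition \ref{gelfand holo}) — are then formal. Thus $\tau$ is an isomorphism and $\mbu(X)$ is a domain of holomorphy.
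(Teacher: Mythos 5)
Your overall strategy --- building an inverse morphism $\sigma(z)(f):=g_f(z)$ by hand --- is not the paper's route, and as sketched it has a genuine gap at exactly the point you flag as ``the main obstacle''. To show that $f\mapsto g_f(z)$ is a \emph{continuous} character you propose ``pulling a bounded neighbourhood of $z$ back and using that $\tau$ covers''; but a holomorphic extension need not be surjective (if it were, being also a local homeomorphism, there would be essentially nothing to prove), so for $z\notin\tau(\mbu(X))$ there is nothing to pull back and Corollary \ref{coro lema1} gives no control at $z$. The correct mechanism is a connectedness argument: the set of $z\in Z$ at which $f\mapsto g_f(z)$ is $p_A$-continuous for some open $X$-bounded $A$ is open (by Lemma \ref{hbudex Riemann lema1}, since $\delta_z=(\delta_{z_0})^w$ for $z$ near $z_0$) and closed (because $\hbu(X)$ is barreled: if $z_n\to z_0$ with each evaluation continuous, the set $\{f:\sup_n|g_f(z_n)|\le1\}$ is closed, absolutely convex and absorbent, hence a neighbourhood of $0$). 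This is precisely how the paper argues in the analogous step of Proposition \ref{hbu-hu=hbu}, and you would need to reproduce it here. A second unproven step is $\tau\circ\sigma=\mathrm{id}_Z$: the uniqueness clause of Definition \ref{def F-ext} is uniqueness of the extended \emph{functions}, and by itself does not guarantee that the family $\{g_f\}$ separates points of $Z$ within a fiber of $p$, which is what you need to identify $\tau(\sigma(z))$ with $z$.

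The paper sidesteps both difficulties by invoking Mujica's criterion \cite[Theorem 52.6]{Muj86}: it suffices to check (a) that $\mbu(X)$ is holomorphically separated --- immediate from Proposition \ref{gelfand holo}, since distinct characters are already separated by Gelfand transforms --- and (b) that for every sequence $(\varphi_j)$ with $d_{\mbu(X)}(\varphi_j)\to0$ some $F\in H(\mbu(X))$ is unbounded on it. Point (b) is proved by contraposition with the same barreledness argument as above: if every $F$ is bounded on $(\varphi_j)$, then $f\mapsto\sup_j|\varphi_j(f)|$ is a continuous seminorm on $\hbu(X)$, so $\varphi_j\prec D$ for a single $X$-bounded $D$, and Lemma \ref{hbudex Riemann lema1} gives $d_{\mbu(X)}(\varphi_j)\ge d_X(D)>0$. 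If you wish to keep your direct construction you must supply the open--closed propagation of continuity and address the separation of points of $Z$; otherwise the clean path is the reduction to Mujica's theorem.
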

\begin{proof}
We may follow the steps of \cite[Proposition 2.4]{DinVen04}. By \cite[Theorem 52.6]{Muj86} it sufices to prove that $\mbu(X)$ is holomorphically separated
and that for each sequence $\{\varphi_j\}$ in $\mbu(X)$ such that $d_{\mbu(X)}(\varphi_j)\to0$,
there exists a function $F$ in $H(\mbu(X))$ such that $\sup_j|F(\varphi_j)|=\infty$.
Since $\mbu(X)$ is separated by $\hbu(X)$, it is holomorphically separated by the above proposition. If $\{\varphi_j\}\subset\mbu(X)$ is such that
$\sup_j |F(\varphi_j )|<\infty$ for all $F\in H(\mbu(X))$, then if $\tau(f):=\sup_j|\varphi_j(f)|$, $\tau$ defines a seminorm on $\hbu(X)$. Thus the set $V=\{f\in\hbu(X):\, \tau(f)\le 1\}$ is absolutely convex and absorvent. It is also closed because $V$ is the intersection of the closed sets $\{f\in\hbu(X):\, |\varphi_j(f)|\le 1\}$. Since $\hbu(X)$ is a barreled space, $V$ is a neighbourhood of 0 and thus $\tau$ is continuous. Therefore, there are an $X$-bounded set $D$ and a constant $c>0$ such that $\tau(f)\le cp_D(f)$ for every $f\in\hbu(X)$, which implies that $\varphi_j\prec D$ for every $j\in\mathbb N$. By Lemma \ref{hbudex Riemann lema1}, $d_{\mbu(X)}(\varphi_j)\ge d_X(D)$.
\end{proof}
Let $(X,q)$ be a connected Riemann domain over $E$. The envelope of holomorphy of $X$ is an extension which is maximal in the sense that any other extension factorizes through it.
\begin{definition}\label{def hbu envelope}\rm
The \textbf{$\hbu$-envelope} of $X$ is a Riemann domain $\mathcal E_{b\u}(X)$ and an $\hbu$-extension morphism $\tau:X\to\mathcal E_{b\u}(X)$ such that if $\nu:X\to Z$ is another $\hbu$-extension, then there exists a morphism $\mu:Z\to\mathcal E_{b\u}(X)$ such that $\nu\circ\mu=\tau$.
\end{definition}
 In \cite{Hir72}, Hirschowitz proved, using germs of analytic functions, the existence of $\mathcal E_{b\u}(X)$  (in a more general framework) and asked whether the extended functions $\tilde f$ are also of type $\u$
on $\mathcal E_{b\u}(X)$, \cite[p. 290]{Hir72}. We will give a partial positive answer to this question in the next section (Corollary \ref{extensiones al envelope}).

We now characterize the $\hbu$-envelope of holomorphy of $X$ in terms of the spectrum of $H_{b\u}(X)$. We sketch the proof which is an adaptation of \cite[Theorem 1.2]{CarMur}.
First note that the conditions that $\u$ be $AB$-closed and regular at $E$ were used in Theorem \ref{mbu dom riemann} only  to deal with Aron-Berner extensions. Thus, it is not difficult to show the following. 
\begin{lemma}
Let $(X,p)$ be a Riemann domain spread over a Banach space $E$ and let $\u$ be a multiplicative holomorphy type with constants as in (\ref{constantes}). Then $(\pi^{-1}(E),\pi)\subset(M_{b\u}(X),\pi)$ is a Riemann domain spread over $E$.
\end{lemma}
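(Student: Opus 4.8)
The plan is to show that $\pi^{-1}(E)$ is a Riemann domain spread over $E$ by mimicking the proof of Theorem \ref{mbu dom riemann}, but restricting the translation maps $\varphi \mapsto \varphi^w$ to directions $w \in E$ rather than $w \in E''$. The point is that for $w \in E$, Lemma \ref{hbudex Riemann lema1} and Lemma \ref{lema phi^v+w} hold without requiring $\u$ to be $AB$-closed or regular at $E$: in those lemmas the Aron-Berner extension was only invoked to make sense of $\varphi^w$ for $w$ in the bidual, and for $w\in E$ the relevant object is simply $\delta_w\circ\frac{d^nf}{n!}$, which is covered by the unrestricted parts of Lemmas \ref{lema1} and \ref{coro lema1} (and of Lemma \ref{hbudex Riemann lema1}). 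So the key observation is just that the subset $\pi^{-1}(E) = \{\varphi \in M_{b\u}(X) : \pi(\varphi) \in E\}$ is closed under the partially-defined operation $\varphi \mapsto \varphi^w$ for $w\in E$ small, since $\pi(\varphi^w) = \pi(\varphi) + w \in E$.

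The steps I would carry out are as follows. First, observe that $\pi^{-1}(E)$ is nonempty since evaluations $\delta_x$ at points $x\in X$ lie in it, with $\pi(\delta_x) = q(x) \in E$. Second, note that $\pi^{-1}(E)$ is a topological subspace of $M_{b\u}(X)$ with the subspace topology, and that the basic neighbourhoods $V_{\varphi,\varepsilon} = \{\varphi^w : w\in E'',\ \|w\| < \varepsilon\}$ of a point $\varphi \in \pi^{-1}(E)$ intersect $\pi^{-1}(E)$ in $V_{\varphi,\varepsilon}^E := \{\varphi^w : w \in E,\ \|w\| < \varepsilon\}$, because $\varphi^w \in \pi^{-1}(E)$ iff $\pi(\varphi) + w \in E$ iff $w \in E$. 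Third, check that $\pi$ restricted to $\pi^{-1}(E)$ is a local homeomorphism onto an open subset of $E$: on each $V_{\varphi,\varepsilon}^E$ the map $\pi$ is a bijection onto $B_E(\pi(\varphi),\varepsilon)$ with inverse $w + \pi(\varphi) \mapsto \varphi^w$, and continuity in both directions follows exactly as in \cite[Theorem 2.2]{AroGalGarMae96}, using that $(\varphi^w)^v = \varphi^{w+v}$ for $v,w \in E$ with $\|v\|+\|w\| < d_X(A)$ (the restriction to $E$ of Lemma \ref{lema phi^v+w}). Fourth, Hausdorffness is inherited from $M_{b\u}(X)$, or can be re-proved directly as in the proof of Theorem \ref{mbu dom riemann} since the separating function argument there only used directions $v,w\in E''$ and a fortiori works with $v,w\in E$ once we know $\pi(\varphi)\ne\pi(\psi)$; the case $\pi(\varphi)=\pi(\psi)$ is handled as in \cite[Theorem 2.2]{AroGalGarMae96}. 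Finally, conclude that $(\pi^{-1}(E),\pi)$ is a Riemann domain spread over $E$ and that it is an open sub-Riemann-domain of $(M_{b\u}(X),\pi)$.

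The only subtle point — and the main thing to verify carefully — is that all the auxiliary lemmas used to build the analytic structure (Lemma \ref{hbudex Riemann lema1}, Lemma \ref{lema tau_w}, Lemma \ref{lema phi^v+w}) remain valid for $w\in E$ \emph{without} the $AB$-closed and regular hypotheses. This is exactly what the remark preceding the statement asserts: "the conditions that $\u$ be $AB$-closed and regular at $E$ were used in Theorem \ref{mbu dom riemann} only to deal with Aron-Berner extensions." Concretely, in Lemma \ref{hbudex Riemann lema1} the series defining $\varphi^w(f) = \sum_n \varphi(\frac{d^nf(\cdot)}{n!}(w))$ makes sense for $w\in E$ using only the unrestricted bound of Corollary \ref{coro lema1}$(i)$; multiplicativity of $\varphi^w$ follows as in \cite[p.551]{AroGalGarMae96}; and in Lemma \ref{lema phi^v+w} the identity $\frac{d^kg_n(\cdot)}{k!} = \binom{k+n}{n}\big(\frac{d^{k+n}f(\cdot)}{(k+n)!}\big)_{v^n}$ for $v\in E$ is the classical Taylor-expansion-of-a-derivative identity and needs no symmetry of Arens extensions. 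So the proof is essentially a bookkeeping exercise confirming that the $E$-restricted versions of these lemmas are genuinely hypothesis-free, after which the construction of the Riemann domain structure is verbatim that of \cite[Theorem 2.2 and Corollary 2.4]{AroGalGarMae96}.
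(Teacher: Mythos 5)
Your proposal is correct and follows exactly the route the paper intends: the paper gives no written proof of this lemma, only the preceding remark that $AB$-closedness and regularity were used in Theorem \ref{mbu dom riemann} solely to handle Aron--Berner extensions, and your argument is precisely the elaboration of that remark, restricting the translations $\varphi\mapsto\varphi^w$ to $w\in E$ where the unrestricted parts of Lemmas \ref{lema1}, \ref{coro lema1}, \ref{hbudex Riemann lema1} and \ref{lema phi^v+w} apply without those hypotheses. The bookkeeping you carry out (nonemptiness via $\delta_x$, the identification $V_{\varphi,\varepsilon}\cap\pi^{-1}(E)=\{\varphi^w:w\in E,\ \|w\|<\varepsilon\}$ via $\pi(\varphi^w)=\pi(\varphi)+w$, and the local homeomorphism and Hausdorff arguments as in \cite[Theorem 2.2]{AroGalGarMae96}) is exactly what is needed.
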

\begin{proposition}
Let $(X,p)$ be a connected Riemann domain spread over a Banach space $E$, let $\u$ be a multiplicative holomorphy type with constants as in (\ref{constantes}) and let $Y$ be the connected component of $\pi^{-1}(E)\subset\mbu(X)$ which intersects $\delta(X)$. Then $\delta:(X,p)\to(Y,\pi)$, $\delta(x)=\delta_x$ is the $\hbu$-envelope of $X$.
\end{proposition}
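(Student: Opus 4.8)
The plan is to check the two defining properties of the $\hbu$-envelope (Definition \ref{def hbu envelope}) for the pair $(Y,\pi)$ with extension morphism $\delta$: first, that $\delta\colon X\to Y$ is itself an $\hbu$-extension in the sense of Definition \ref{def F-ext}; and second, that every $\hbu$-extension of $X$ factors through it. The whole argument runs parallel to \cite[Theorem 1.2]{CarMur}: all its ingredients —the translated characters $\varphi^w$ of Lemma \ref{hbudex Riemann lema1} and Lemma \ref{lema phi^v+w}, and the holomorphy of Gelfand transforms, Proposition \ref{gelfand holo}— are available over $E$ for any multiplicative holomorphy type with constants as in (\ref{constantes}), since, as observed just before the preceding lemma, $AB$-closedness and regularity were only used in Theorem \ref{mbu dom riemann} in order to deal with Aron--Berner extensions, i.e. to move outside $\pi^{-1}(E)$; that lemma also ensures $(Y,\pi)$ is a Riemann domain over $E$, its components being open.

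\emph{$\delta$ is an $\hbu$-extension.} That $\delta$ is a morphism with $\pi\circ\delta=q$ is immediate from $\pi(\delta_x)(x')=\delta_x(x'\circ q)=x'(q(x))$, and $\delta(X)$ is connected, hence lies in the component $Y$. To see $\delta$ is a local homeomorphism, fix $x\in X$ and $0<r<d_X(x)$. Since $p_{B_\rho(x)}(f)\ge|f(x)|=|\delta_x(f)|$, we have $\delta_x\prec B_\rho(x)$ with $d_X(B_\rho(x))\to d_X(x)$ as $\rho\to0$, so Lemma \ref{hbudex Riemann lema1} gives $\delta_x^w\in\mbu(X)$ for $\|w\|<r$ with $\pi(\delta_x^w)=q(x)+w$; writing out the Taylor series of $f$ at $x$ we get $\delta_x^w(f)=\sum_n\frac{d^nf(x)}{n!}(w)=f(y)$ where $y=(q|_{B_{d_X(x)}(x)})^{-1}(q(x)+w)\in X$, that is $\delta_x^w=\delta_y$. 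Hence $\delta$ maps $B_r(x)$ bijectively onto $\{\delta_x^w:\,w\in E,\ \|w\|<r\}=V_{\delta_x,r}\cap\pi^{-1}(E)$, so it is open and injective on balls, a local homeomorphism onto the open set $Y$. For $f\in\hbu(X)$ its Gelfand transform $\tilde f$ is holomorphic on $\mbu(X)\supset Y$ by Proposition \ref{gelfand holo} and $\tilde f\circ\delta(x)=\delta_x(f)=f(x)$; such an extension is unique because $Y$ is connected and $\delta(X)$ is open in $Y$. Thus $\delta\colon X\to Y$ is an $\hbu$-extension.

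\emph{Universal property.} Let $\nu\colon X\to Z$ be another $\hbu$-extension, $Z$ a Riemann domain over $E$ with spread $q_Z$; passing to the component of $\nu(X)$ we may assume $Z$ connected. For $f\in\hbu(X)$ let $\tilde f^{Z}\in H(Z)$ be its unique extension; uniqueness forces $f\mapsto\tilde f^{Z}$ to be a unital algebra homomorphism. Put $\mu(z)(f)=\tilde f^{Z}(z)$, an algebraic character of $\hbu(X)$. The set $G$ of $z\in Z$ for which $\mu(z)$ is continuous contains $\nu(X)$, since $\mu(\nu(x))=\delta_x$; it is open, because if $z_0\in G$ with $\mu(z_0)\prec A$ then $\mu(z)=\mu(z_0)^{\,q_Z(z)-q_Z(z_0)}$ for $z$ close to $z_0$, which belongs to $\mbu(X)$ by Lemma \ref{hbudex Riemann lema1} (the $k$-independent estimates of Corollary \ref{coro lema1} giving absolute convergence of the defining series); and it is closed, hence $G=Z$. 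So $\mu\colon Z\to\mbu(X)$ is well defined; the same local formula $\mu(z)=\mu(z_0)^{\,q_Z(z)-q_Z(z_0)}$ shows $\mu$ is continuous, and since the extension of the linear function $x'\circ q$ is $x'\circ q_Z$ we get $\pi(\mu(z))(x')=\mu(z)(x'\circ q)=x'(q_Z(z))$, i.e. $\pi\circ\mu=q_Z$. In particular $\mu(Z)\subset\pi^{-1}(E)$, and as $Z$ is connected with $\mu(\nu(x))=\delta_x\in Y$, in fact $\mu(Z)\subset Y$; lying over the local homeomorphism $q_Z$, $\mu$ is itself a local homeomorphism, hence a morphism of Riemann domains. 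Finally $\mu\circ\nu=\delta$, because $\mu(\nu(x))(f)=\tilde f^{Z}(\nu(x))=f(x)=\delta_x(f)$. This is exactly the factorization demanded in Definition \ref{def hbu envelope}, so $(Y,\pi)$ with $\delta$ is the $\hbu$-envelope of $X$.

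The point I expect to be the main obstacle is proving that $\mu(z)$ is continuous for all $z\in Z$, equivalently that the extended values $\tilde f^{Z}(z)$ are majorized by one seminorm $p_A$ as $f$ ranges over $\hbu(X)$; this is where working with $\u$-bounded type rather than merely type $\u$ is indispensable. In the open--closed argument above the open step is the transparent one (via $\mu(z_0)^{\,q_Z(z)-q_Z(z_0)}$ and Corollary \ref{coro lema1}), whereas the closed step is the delicate one, and there I would follow the corresponding passage of \cite[Theorem 1.2]{CarMur} in detail; everything else is routine bookkeeping with the results of Sections \ref{section multiplicative} and \ref{section holo extensions}.
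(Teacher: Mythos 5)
Your proof is correct in outline, but it takes a genuinely different route from the paper's. The paper takes the existence of the Hirschowitz envelope $\mathcal E_{b\u}(X)$ as given, uses its universal property to obtain a morphism $\nu:Y\to\mathcal E_{b\u}(X)$ with $\sigma=\nu\circ\delta$, and then shows $\nu(Y)$ is open and closed in the connected envelope; the delicate step there is closedness, achieved by manufacturing a series $f=\sum_j f_j\in\hbu(X)$ whose extension blows up along a sequence $y_k\to y\in\overline{\nu(Y)}\setminus\nu(Y)$, contradicting extendibility to the envelope. You instead verify the universal property of Definition \ref{def hbu envelope} directly: given an arbitrary $\hbu$-extension $\nu:X\to Z$ you build the factorizing morphism $\mu:Z\to Y$ by point evaluation of the extended functions, proving continuity of each $\mu(z)$ by an open--closed argument on $Z$. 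What your approach buys is self-containment (no appeal to Hirschowitz's germ construction of the envelope); what the paper's buys is that the open--closed argument happens in a space whose points are already known to extend everything. Two remarks on your sketch: first, the step you flag as the delicate one --- closedness of your set $G$ --- is in fact the routine barrelledness argument that the paper itself uses verbatim in the first paragraph of Proposition \ref{hbu-hu=hbu} and in Corollary \ref{mbu dom holo} (the barrel $\{f:\ \sup_n|\tilde f^{Z}(z_n)|\le 1\}$ is a neighbourhood of $0$, the supremum being finite because $\tilde f^{Z}$ is continuous at the limit point), so you need not outsource it to \cite{CarMur}. Second, the step that does deserve an explicit line is the identity $\mu(z)=\mu(z_0)^{\,q_Z(z)-q_Z(z_0)}$: it rests on the fact that, by uniqueness of extensions to the connected domain $Z$, the extension of $\delta_w\circ\frac{d^kf}{k!}$ is $\delta_w\circ\frac{d^k\tilde f^{Z}}{k!}$, so the Taylor coefficients of $\tilde f^{Z}$ at $z_0$ are exactly the extended differentials, and Corollary \ref{coro lema1} then gives the absolute convergence you quote. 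You are also right that Proposition \ref{gelfand holo} and the character translations are needed only over $E$, where $AB$-closedness and regularity play no role; the paper makes the same implicit restriction.
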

\begin{proof}
Let $\sigma:X\to \mathcal E_{b\u}(X)$ be the $\hbu$-extension from $X$ to the $\hbu$-envelope of $X$.
By Proposition \ref{gelfand holo},  $\delta:X\to Y$ is an $\hbu$-extension. Moreover, for each point $y\in Y$, the evaluation $\delta_y:\hbu(X)\to\mathbb C$, $\delta_y(f)=\tilde f(y)$ is continuous.
Then there is a morphism $\nu:Y\to \mathcal E_{b\u}(X)$ such that $\sigma=\nu\circ\delta$.

We show that $\nu$ is an isomorphism. $\nu(Y)$ is open in $\mathcal E_{b\u}(X)$ because $\nu$ is a morphism.

Let us see that $\nu(Y)$ is closed in $\mathcal E_{b\u}(X)$. Suppose that $y\in\overline{\nu(Y)}\setminus\nu(Y)$. Let $W_n=\{\varphi\in Y:\, \varphi\prec X_n\}$, where $X_n=\{x\in X:\, \|p(x)\|\le n,\, d_X(x)\ge\frac1{n}\}$. Then by Lemma \ref{hbudex Riemann lema1}, $d_{Y}(W_n)\ge\frac1n$. Therefore we can get a subsequence of integers $(n_k)_k$ and a sequence $(y_k)_k\subset Y$ such that $y_k\in W_{n_{k+1}}\setminus W_{n_k}$ and $y_k\to y$. Thus there are functions $f_k\in \hbu(X)$ such that $p_{X_{n_k}}(f_k)<\frac1{2^k}$ and $|\tilde f_k(y_k)|>k+\sum_{j=1}^{k-1}|\tilde f_j(y_k)|$. Then the series $\sum_{j=1}^{\infty}f_j$ converges to a function $f\in \hbu(X)$ and
moreover $\Big|\Big(\sum_{j=1}^{\infty}f_j\Big)^\sim(y_k)\Big| =\Big|\sum_{j=1}^{\infty}\tilde f_j(y_k)\Big|$ because $\delta_{y_k}$ is $\hbu(X)$-continuous. Therefore
$$
|\tilde f(y_k)| =\Big|\sum_{j=1}^{\infty}\tilde f_j(y_k)\Big| \ge|\tilde f_k(y_k)|-\Big|\sum_{j=1}^{k-1}\tilde f_j(y_k)\Big|-\Big|\sum_{j=k+1}^{\infty}\tilde f_j(y_k)\Big|>k-1,
$$
so we have that $|\tilde f(y_k)|\to\infty$  and then $f$ cannot be extended to $y$. This is a contradiction since $y$ belongs to the $\hbu$-envelope of $X$, $\mathcal E_{b\u}(X)$. Thus $\nu(Y)$ is closed in $\mathcal E_{b\u}(X)$.
\end{proof}

\section{Type $\u$ extensions}\label{section type u extensions}

It is also natural to consider extensions where the extended functions are not only holomorphic but also of type $\u$. As mentioned in Remark \ref{remark ext son de hu}, we cannot expect in general that the extensions be of  $\u$-bounded type, since even for the current type, the extension of a bounded type function to the $H_b$-envelope of holomorphy may fail to be of bounded type. We may ask instead if, at least, they are in $\hu$.
\begin{definition}\label{hbu-hu-extension}\rm
A Riemann domain morphism $\tau:(X,q)\to(Y,\tilde q)$ is an \textbf{$\hbu$-$\hu$-extension} if for each $f\in\hbu(X)$ there exists a unique $\tilde f\in\hu(Y)$ such that $\tilde f\circ\tau=f$.
\end{definition}
For the current type, $\hu$ is the space of all holomorphic functions, thus in this case, every extension of a function in $\hbu=H_b$ belongs to $\hu=H$. On the other hand, Dineen found (see \cite[Example 11]{Din71(holomorphy-types)}) an entire function $f$ of bounded type  that has nuclear radius of convergence  $r>0$ and  such that there exists $x\in E$ for which $d^2f(x)\notin \p_N^2(E)$. This means that $f$ belongs to $H_{bN}(rB_E)$ and it extends to an entire function in $H_b(E)$, but this extension is not in $H_N(E)$. Thus, the extension of a single function in $\hbu$ need not be of type $\u$. 
In this section we show, under the additional hypothesis of weak differentiability, that when all functions in $\hbu$ are extended simultaneously  (that is, when one deals with $\hbu$-extensions), the extended functions are of type $\u$.
\begin{remark}\rm
There is also a corresponding notion of $\hbu$-$\hu$-envelope of holomorphy.
This was considered by Moraes in \cite{Mor81}, where she proved, using germs of analytic functions, that the $\hbu$-$\hu$-envelope of holomorphy always exists. Thus, a positive answer to the question of Hirschowitz whether the extensions to the $\hbu$-envelope are of type $\u$ is equivalent to the coincidence of 
 the $\hbu$-envelope with the $\hbu$-$\hu$-envelope.
This will be proved in Corollary \ref{extensiones al envelope} for weakly differentiable sequences.
\end{remark}
\begin{definition}\rm
Let $\u$ be a sequence of polynomial ideals and let $E$ be
a Banach space. We say that $\u$ is \textbf{weakly differentiable} at $E$ if there
exist constants $c_{k,l}>0$ such that, for  $l < k$,  $P\in\u_k(E)$ and $\varphi\in\u_{k-l}(E)'$,  the mapping
$x\mapsto\varphi(P_{x^l})$ belongs to $\u_l(E)$ and
$$\Big\|x\mapsto\varphi\big(P_{x^l}\big)\Big\|_{\u_l(E)} \le c_{k,l}\|\varphi\|_{\u_{k-l}(E)'}\|P\|_{\u_k(E)}.$$
\end{definition}
\begin{remark}\label{w-dif dual a mult}\rm
Weak differentiability, a condition which is stronger than being a holomorphy type, was defined in \cite{CarDimMur} and is dual to multiplicativity in the following sense: if $\{\u_k\}_k$ is a weakly differentiable sequence then the sequence of adjoint ideals $\{\u_k^{*}\}_k$ is multiplicative (with the same constants); and if $\{\u_k\}_k$ is multiplicative then the sequence of adjoint ideals $\{\u_k^{*}\}_k$ is weakly differentiable (with the same constants), see \cite[Proposition 3.16]{CarDimMur}.
\end{remark}
All examples appearing in Section \ref{section prelim} but $\mathcal M_r$ and $\mathcal S_p$ were shown in \cite{CarDimMur} to be weakly differentiable sequences. It is not difficult to see that the constants satisfy (\ref{constantes}) in all those cases. We don't know if the sequence of multiple $r$-summing polynomials is weakly differentiable. We prove now that the sequences of Hilbert-Schmidt and Schatten-von Neumann ideals of polynomials are weakly differentiable. Moreover, the duality between multiplicativity and weak differentiability allows us to show they are also multiplicative, and the bound obtained for the norm of the product of Hilbert-Schmidt polynomials improves \cite[Lemma 2.1]{Pet01}.
\begin{example}\label{HS w-diff}
The sequence $\mathcal S_2$ of ideals of Hilbert-Schmidt polynomials is weakly differentiable and multiplicative with constants $c_{k,l}=1$.\rm\begin{proof}                                                                                                                          
 Let $H$ be a Hilbert space with orthonormal basis $(e_i)_i$. Recall that $\mathcal S_2^k(H)$ is the completion of finite type $k$-homogeneous polynomials on $H$ with respect to the norm associated to the inner product
$$
\langle P,Q\rangle_{\mathcal S_2^k(H)}=\sum_{i_1,\dots,i_k}\v P(e_{i_1},\dots,e_{i_k})\overline{\v Q(e_{i_1},\dots,e_{i_k})}.
$$
Let $P\in\p^k(H)$. It is not difficult to deduce (see \cite[Lemma 1]{Dwy71}) that $P$ belongs to $\mathcal S_2^k(H)$ if and only if it is (uniquely) expressed as a limit in the $\mathcal S_2^k(H)$-norm by
\begin{equation}\label{expresion HS}
 P=\sum_{i_1,\dots,i_k}a_{i_1\dots i_k}e'_{i_1}\dots e_{i_k}',
\end{equation}
with symmetric coefficients $a_{i_1\dots i_k}\in\mathbb C$ and
$$
\sum_{i_1,\dots,i_k}|a_{i_1\dots i_k}|^2=\|P\|_{\mathcal S_2^k(H)}^2<\infty.
$$
Let $\varphi\in\mathcal S_2^l(H)'$ and let $Q=\sum_{i_1,\dots,i_l}b_{i_1\dots i_l}e'_{i_1}\dots e_{i_l}'\in\mathcal S_2^l(H)$ be such that $\varphi=\langle \cdot,Q\rangle_{\mathcal S_2^l(H)}$. Then
$$
\langle P_{x^{k-l}},Q\rangle_{\mathcal S_2^l(H)}=\sum_{i_1,\dots,i_l}\big(\sum_{i_{l+1},\dots,i_k}a_{i_1\dots i_k}x_{i_{l+1}}\dots x_{i_k}\big)\overline{b_{i_1\dots i_l}}.
$$
This series is absolutely convergent, indeed
\begin{eqnarray*}
\Big(\hspace{-.1cm}\sum_{i_1,\dots,i_l}\Big(\hspace{-.2cm}\sum_{i_{l+1},\dots,i_k}\hspace{-.3cm}|a_{i_1\dots i_k}x_{i_{l+1}}\dots x_{i_k}|\Big)|b_{i_1\dots i_l}|\Big)^2 & \le & \|Q\|^2_{\mathcal S_2^l(H)}\sum_{i_1,\dots,i_l}\Big(\hspace{-.2cm}\sum_{i_{l+1},\dots,i_k}\hspace{-.3cm}|a_{i_1\dots i_k}x_{i_{l+1}}\dots x_{i_k}|\Big)^2 \\ &\le & \|Q\|^2_{\mathcal S_2^l(H)}\sum_{i_1,\dots,i_l}\Big(\hspace{-.2cm}\sum_{i_{l+1},\dots,i_k}\hspace{-.3cm}|a_{i_1\dots i_k}|^2\Big)\Big(\hspace{-.2cm}\sum_{i_{l+1},\dots,i_k}\hspace{-.3cm}|x_{i_{l+1}}\dots x_{i_k}|^2\Big)  \\
&\le & \|Q\|^2_{\mathcal S_2^l(H)}\|P\|^2_{\mathcal S_2^k(H)}\|x\|^{2(k-l)}.
\end{eqnarray*}
Thus reversing the order of summation we obtain,
$$
x\mapsto\langle P_{x^{k-l}},Q\rangle_{\mathcal S_2^l(H)}=\sum_{i_{l+1},\dots,i_k}\Big(\sum_{i_1,\dots,i_l}a_{i_1\dots i_k}\overline{b_{i_1\dots i_l}}\Big)e_{i_{l+1}}'\dots e_{i_k}'.
$$
Note that this representation is as in (\ref{expresion HS}) and since
$$
 \sum_{i_{l+1},\dots,i_k}\Big|\sum_{i_1,\dots,i_l}a_{i_1\dots i_k}\overline{b_{i_1\dots i_l}}\Big|^2 \le \sum_{i_{l+1},\dots,i_k}\big(\sum_{i_1,\dots,i_l}|a_{i_1\dots i_k}|^2\big)\big(\sum_{i_1,\dots,i_l}|b_{i_1\dots i_l}|^2\big)
 =\|P\|^2_{\mathcal S_2^k(H)}\|Q\|^2_{\mathcal S_2^l(H)},
$$
we conclude that $x\mapsto\langle P_{x^{k-l}},Q\rangle_{\mathcal S_2^l(H)}$ is in $\mathcal S_2^l(H)$ and has $\mathcal S_2^l(H)$-norm $\le\|P\|_{\mathcal S_2^k(H)}\|Q\|_{\mathcal S_2^l(H)}$, that is, $\mathcal S_2$ is weakly differentiable with $c_{k,l}=1$.

Moreover, since the adjoint ideal of $\mathcal S_2^k$ (as a normed ideal of polynomials on Hilbert spaces) is again $\mathcal S_2^k$ and since the sequence of adjoint ideals of a weakly differentiably sequence is multiplicative with the same constants by \cite[Proposition 3.16]{CarDimMur}, we conclude that $\mathcal S_2$ is multiplicative with $c_{k,l}=1$. 
\end{proof}
\end{example}
\begin{example}
The sequence $\mathcal S_p$ of Schatten-von Neumann polynomials.\rm\newline\smallskip
Using the Reiteration theorem for the complex interpolation method, or by duality with Example \ref{schatten multip}, we deduce that $\mathcal S_p$ is weakly differentiable with constants $c_{k,l}=1$ if $1<p<2$ and $$c_{k,l} \le\Big(\frac{(k+l)^{k+l}}{(k+l)!}\frac{k!}{k^k}\frac{l!}{l^l}\Big)^{1-\frac2{p}} $$ for $2<p<\infty$.
\end{example}
We prove now that for weakly differentiable holomorphy types every $\hbu$-extension is an $\hbu$-$\hu$-extension.
\begin{lemma}\label{lema1 cartan-thullen}
Let $\u$ be a  weakly differentiable holomorphy type with constants as in (\ref{constantes}) and $f\in\hbu(X)$. Then for each open $X$-bounded  set $A$ and $\alpha<d_X(A)$, there exist a positive constant $C$ and an open $X$-bounded set $\tilde A$  such that for every $k\in\mathbb N$ and $\varphi\in\u_k(E)'$, $\varphi\circ\frac{d^kf}{k!}$ belongs to $\hbu(X)$ and
$$
\alpha^k p_A(\varphi\circ\frac{d^kf}{k!})\le C\|\varphi\|_{\u_k(E)'}p_{\tilde A}(f).
$$
\end{lemma}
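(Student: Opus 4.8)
The plan is to re-run the argument of Lemma \ref{lema1} and Corollary \ref{coro lema1}, but feeding the \emph{weak} differentiability of $\u$ into the estimate in place of the holomorphy type property; this is exactly the hypothesis that makes the dual norm $\|\varphi\|_{\u_k(E)'}$ (rather than a product of values of $\varphi$) appear on the right-hand side. First fix $\rho$ with $\alpha<\rho<d_X(A)$, set $\tilde A=\bigcup_{x\in A}B_\rho(x)$ (an open $X$-bounded set, since $d_X(\tilde A)\ge d_X(A)-\rho>0$), and choose $\varepsilon>0$ with $(1+\varepsilon)\alpha<\rho$.

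Since $\varphi$ is a continuous linear form on $\u_k(E)$ and $d^kf\in H(X,\u_k(E))$ by \cite[$\S$10 Proposition 2]{Nac69}, $\varphi\circ\frac{d^kf}{k!}$ is holomorphic on $X$. Fix $x_0$ with $B_s(x_0)\subset A$ and write $Q_m=\frac{d^mf}{m!}(x_0)$. As in the proof of Lemma \ref{lema1}, on $B_s(x_0)$ the expansion $\frac{d^kf}{k!}(y)=\sum_{m\ge k}\binom{m}{k}(Q_m)_{(q(y)-q(x_0))^{m-k}}$ converges absolutely in $\u_k(E)$, so applying $\varphi$ term by term identifies the differentials
$$\frac{d^{j}\big(\varphi\circ\frac{d^kf}{k!}\big)}{j!}(x_0)=\Big[\,x\longmapsto\binom{k+j}{k}\,\varphi\big((Q_{k+j})_{x^{j}}\big)\,\Big].$$
Weak differentiability now says each map $x\mapsto\varphi\big((Q_{k+j})_{x^{j}}\big)$ belongs to $\u_{j}(E)$ with $\u_j(E)$-norm $\le c\,\|\varphi\|_{\u_k(E)'}\|Q_{k+j}\|_{\u_{k+j}(E)}$ for a constant $c$ bounded as in (\ref{constantes}); pairing with $\binom{k+j}{k}$ and applying (\ref{eq stirling}) exactly as in Corollary \ref{coro lema1} gives
$$\binom{k+j}{k}\big\|\,x\mapsto\varphi\big((Q_{k+j})_{x^{j}}\big)\big\|_{\u_{j}(E)}\;\le\;e^{2}\Big(\frac{kj}{k+j}\Big)^{1/2}\binom{k+j}{k}\,\|\varphi\|_{\u_k(E)'}\,\|Q_{k+j}\|_{\u_{k+j}(E)}.$$
The essential observation is that the residual Stirling factor satisfies $e^{2}(kj/(k+j))^{1/2}\le e^{2}\sqrt{k}$, i.e.\ it depends on $k$ but not on the summation index $j$. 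So, picking $c_\varepsilon$ with $e^{2}\sqrt{n}\le c_\varepsilon(1+\varepsilon)^{n}$ for all $n$ and summing over $j$, this factor is absorbed into $(1+\varepsilon)^{k}$, which multiplies $\alpha^{k}$ and not the radius $s$; with the binomial identity $\sum_{j}\beta^{j}\sum_{m\ge j}\binom{m}{j}s^{m-j}\|Q_m\|_{\u_m(E)}=p_{s+\beta}^{x_0}(f)$ ($\beta=(1+\varepsilon)\alpha$) this yields
$$\alpha^{k}\,p_s^{x_0}\!\Big(\varphi\circ\frac{d^kf}{k!}\Big)\;\le\;c_\varepsilon\,\|\varphi\|_{\u_k(E)'}\,p_{s+(1+\varepsilon)\alpha}^{x_0}(f).$$
Since $B_s(x_0)\subset A$ and $(1+\varepsilon)\alpha<\rho<d_X(A)$, Lemma \ref{lema B_s+delta} shows $B_{s+(1+\varepsilon)\alpha}(x_0)$ exists and lies in $\tilde A$, so $p_{s+(1+\varepsilon)\alpha}^{x_0}(f)\le p_{\tilde A}(f)$; taking the supremum over all $B_s(x_0)\subset A$ gives the claimed inequality with $C=c_\varepsilon$ (depending only on $\alpha$ and $\rho$). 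Running the same estimate on an arbitrary open $X$-bounded set shows $p_A(\varphi\circ\frac{d^kf}{k!})<\infty$; combined with the differential computation above (all differentials in the appropriate $\u_j(E)$, with the Taylor series converging with the correct radius at each point) this gives $\varphi\circ\frac{d^kf}{k!}\in\hbu(X)$.

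The delicate point is precisely this Stirling bookkeeping: one must check that after pairing $\binom{k+j}{k}$ with the weak-differentiability constant and applying (\ref{eq stirling}), the leftover factor depends on $k$ alone. Only then does the loss get absorbed into $(1+\varepsilon)^{k}$ sitting next to $\alpha^{k}$ rather than next to $s^{j}$; were it to depend on $j=m-k$, absorbing it would inflate $s$ to $(1+\varepsilon)s$, the ball of radius $(1+\varepsilon)s+(1+\varepsilon)\alpha$ might fail to exist inside any fixed enlargement of $A$, and both the bound by $p_{\tilde A}(f)$ and the $k$-independence of $C$ would break down.
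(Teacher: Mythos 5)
Your proposal is correct and follows essentially the same route as the paper, whose proof of this lemma is literally the one-line remark that ``the proof of Lemma \ref{lema1} with the bound obtained in Corollary \ref{coro lema1} works here for any $\varphi\in\u_k(E)'$''; you have simply written out that rerun in full, substituting weak differentiability for Lemma \ref{lema0} and correctly noting that the residual Stirling factor $e^2\sqrt{kj/(k+j)}\le e^2\sqrt k$ depends only on $k$ and is absorbed into $(1+\varepsilon)^k\alpha^k$.
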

\begin{proof}
Since $\u$ is weakly differentiable, the proof of Lemma \ref{lema1} with the bound obtained in Corollary \ref{coro lema1} works here for any $\varphi\in\u_k(E)'$.
\end{proof}
\begin{proposition}\label{hbu-hu=hbu}
Let $(X,q)$ be a connected Riemann domain spread over a Banach space $E$, let $\u$ be a weakly differentiable holomorphy type with constants as in (\ref{constantes}). Let $e:(X,q)\to(Y,p)$ be an $\hbu$-extension. 
Then for each $y\in Y$ there exists connected open subset $Z$, $e(X)\cup\{y\}\subset Z\subset Y$ such that for every $f\in\hbu(X)$, the extension $\tilde f$ to $Y$ is in $\hbu(Z)$. In particular, $\tilde f$ is of type $\u$ in $Y$.
\end{proposition}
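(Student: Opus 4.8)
The plan is to prove the stronger statement that, for each $f\in\hbu(X)$ and each $k$, the $\u_k(E)$-valued holomorphic map $\frac{d^kf}{k!}$ on $X$ extends past $X$ to a $\u_k(E)$-valued holomorphic map on $Y$ (which must then be $\frac{d^k\tilde f}{k!}$), with geometric bounds that afterwards sum up. The engine is Lemma \ref{lema1 cartan-thullen} --- the only place where weak differentiability of $\u$ is used: for every $\varphi\in\u_k(E)'$ the scalar function $\varphi\circ\frac{d^kf}{k!}$ lies in $\hbu(X)$, with $\alpha^kp_A(\varphi\circ\frac{d^kf}{k!})\le C\|\varphi\|_{\u_k(E)'}p_{\tilde A}(f)$ for every open $X$-bounded $A$ and $\alpha<d_X(A)$, where $C$ and $\tilde A$ do not depend on $k$ or $\varphi$. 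Since $e$ is an $\hbu$-extension, $\varphi\circ\frac{d^kf}{k!}$ has a unique holomorphic extension $g_{k,\varphi}\in H(Y)$, and $\varphi\mapsto g_{k,\varphi}$ is linear. Using that the point evaluations $\delta_{y'}\colon\hbu(X)\to\mathbb C$, $h\mapsto\tilde h(y')$, coming from the extension are continuous characters --- so that, the generating seminorms of $\hbu(X)$ being the increasing family $\{p_{X_n}\}$, one has $\delta_{y'}\prec A_0$ for some open $X$-bounded $A_0$ --- we obtain $|g_{k,\varphi}(y')|\le c\,p_{A_0}(\varphi\circ\frac{d^kf}{k!})\le c\,C\,\alpha^{-k}\|\varphi\|_{\u_k(E)'}p_{\tilde A_0}(f)$ for $\alpha<d_X(A_0)$.

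Thus $\xi_{k,y'}\colon\varphi\mapsto g_{k,\varphi}(y')$ is a bounded linear functional on $\u_k(E)'$, i.e. an element of $\u_k(E)''$ of norm $\le c\,C\,\alpha^{-k}p_{\tilde A_0}(f)$, which on $e(X)$ coincides with $\frac{d^kf(x)}{k!}\in\u_k(E)$. The heart of the matter is that $\xi_{k,y'}$ actually belongs to $\u_k(E)$, not merely to $\u_k(E)''$. By the Banach--Dieudonn\'e (Krein--Smulian) theorem it suffices to show that $\xi_{k,y'}$ is $w^*$-continuous on bounded subsets of $\u_k(E)'$; so let $(\varphi_\lambda)$ be a bounded net $w^*$-converging to $\varphi$. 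Then $g_{k,\varphi_\lambda}(e(x))=\varphi_\lambda\big(\frac{d^kf(x)}{k!}\big)\to\varphi\big(\frac{d^kf(x)}{k!}\big)=g_{k,\varphi}(e(x))$ for every $x\in X$, while $(g_{k,\varphi_\lambda})$ is locally bounded on $Y$ uniformly in $\lambda$ (the estimate above only used $\|\varphi_\lambda\|_{\u_k(E)'}\le1$, and it propagates to a neighbourhood of any point of $Y$ by the same $\prec$-argument). Since a locally bounded family of holomorphic functions converging pointwise on the nonempty open set $e(X)$ converges pointwise on the connected domain $Y$, we get $g_{k,\varphi_\lambda}(y')\to g_{k,\varphi}(y')$, i.e. $\xi_{k,y'}(\varphi_\lambda)\to\xi_{k,y'}(\varphi)$; hence $\xi_{k,y'}\in\u_k(E)$. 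Finally $y'\mapsto\xi_{k,y'}$ and $y'\mapsto\frac{d^k\tilde f(y')}{k!}$ are holomorphic $\p^k(E)$-valued maps on $Y$ that agree on $e(X)$, so by the identity theorem $\frac{d^k\tilde f(y')}{k!}=\xi_{k,y'}\in\u_k(E)$ for all $y'$, with $\big\|\frac{d^k\tilde f(y')}{k!}\big\|_{\u_k(E)}\le c\,C\,\alpha^{-k}p_{\tilde A_0}(f)$ whenever $\delta_{y'}\prec A_0$ and $\alpha<d_X(A_0)$. In particular $\limsup_k\big\|\frac{d^k\tilde f(y')}{k!}\big\|_{\u_k(E)}^{1/k}<\infty$ at every point, so $\tilde f\in\hu(Y)$ already.

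What remains is to produce a connected open $Z$ with $e(X)\cup\{y\}\subset Z\subset Y$ on which $\tilde f$ is of $\u$-bounded type, i.e. on which the estimate above is uniform over $Z$-bounded sets; this is the step I expect to be the main obstacle. One must choose $Z$ small enough that a Cartan--Thullen type comparison holds: every $Z$-bounded $A$ is dominated by an open $X$-bounded $A_0\subset X$ with $\delta_{y'}\prec A_0$ uniformly for $y'\in A$ and $d_X(A_0)$ comparable to $d_Z(A)$ --- e.g. for $Y$ the $\hbu$-envelope a suitable increasing union of the sets $W_n=\{\varphi\in Y:\varphi\prec X_n\}$, using $d_Y(W_n)\ge\frac1n$ (Lemma \ref{hbudex Riemann lema1}), and for a general $\hbu$-extension the preimage of such a set under the morphism factoring $e$ through the envelope. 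Granting this, for $B_s(y')\subset A$ one gets $p^{y'}_s(\tilde f)=\sum_ks^k\big\|\frac{d^k\tilde f(y')}{k!}\big\|_{\u_k(E)}\le c\,C\,p_{\tilde A_0}(f)\sum_k(s/\alpha)^k<\infty$ with $s<\alpha<d_X(A_0)$, uniformly over such $y'$, so $p_A(\tilde f)<\infty$; combined with condition (i) of Definition \ref{def hbu(X)}, which follows from the same $\limsup$ bound, this gives $\tilde f\in\hbu(Z)$, and since $y\in Z$, that $\tilde f$ is of type $\u$ at $y$.
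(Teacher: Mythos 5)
Your route to the pointwise statement (that $\frac{d^k\tilde f(y')}{k!}$ lands in $\u_k(E)$) is genuinely different from the paper's and is attractive: you pass through $\u_k(E)''$ and descend via Krein--\v{S}mulian, whereas the paper never leaves $\u_k(E)$. But as written there are two gaps. First, the local uniform boundedness of $\{g_{k,\varphi}:\|\varphi\|_{\u_k(E)'}\le1\}$ does not follow from ``the same $\prec$-argument'': the relation $\delta_{y'}\prec A_0$ is obtained point by point (the open--closed argument only gives continuity of each evaluation, and balls in $Y$ are not compact, so there is no single $A_0$ and $c$ working on a neighbourhood of $y'$). This is repairable --- each $g_{k,\varphi}$ is holomorphic on $Y$, hence locally bounded, so $\{\xi_{k,y''}:y''\in B\}$ is pointwise bounded on $\u_k(E)'$ and the uniform boundedness principle gives the norm bound --- but the justification you give is not the right one. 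Second, the Vitali-type statement you invoke is for a \emph{net} of holomorphic functions on an infinite-dimensional Riemann domain; the classical theorem is for sequences, and the net version needs an argument (normal families on one-dimensional slices, uniqueness of cluster points, plus a connectedness argument), which you do not supply.

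The more serious gap is the final step, which you yourself flag as the main obstacle and leave conditional. The construction of the connected $Z\ni y$ and the required comparison (every $Z$-bounded set dominated by an open $X$-bounded $A_0$ with $\delta_{y'}\prec A_0$ uniformly and $d_X(A_0)>d_Z(A)$) is exactly the content of the proposition, and the tools you propose for it --- the sets $W_n=\{\varphi\in Y:\varphi\prec X_n\}$ with $d_Y(W_n)\ge\frac1n$ from Lemma \ref{hbudex Riemann lema1}, and the identification of the envelope inside the spectrum --- all require $\u$ to be \emph{multiplicative}, which is not a hypothesis here (only weak differentiability is assumed, and $\hbu(X)$ need not be an algebra). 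The paper avoids this entirely: it joins $e(X)$ to $y$ by a curve $\gamma$, uses compactness of $\gamma([0,1])$ together with barrelledness of $\hbu(X)$ to produce a \emph{single} open $X$-bounded $A$ with $\delta_{\gamma(t)}\prec A$ for all $t$, and then runs an open--closed argument in the parameter $t$. At each stage the differentials at the already-reached point $\gamma(t_1)$ are known (inductively) to lie in $\u_k(E)$, so $\big\|\frac{d^k\tilde f(\gamma(t_1))}{k!}\big\|_{\u_k(E)}=\sup_{\varphi\in B_{\u_k(E)'}}\big|\varphi\big(\frac{d^k\tilde f(\gamma(t_1))}{k!}\big)\big|$, and the $k$-independent bound of Lemma \ref{lema1 cartan-thullen} converts directly into convergence of $\sum_k r^k\big\|\frac{d^k\tilde f(\gamma(t_1))}{k!}\big\|_{\u_k(E)}$ for $r<d_X(A)$, extending $Z$ by the ball $B_r(\gamma(t_1))$. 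You should either adopt that continuation scheme for the last step or find a substitute for Lemma \ref{hbudex Riemann lema1} that does not use multiplicativity; until then the proof is incomplete.
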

\begin{proof}
Note first that the evaluation at each point $y$ in $Y$ defines a continuous character on $\hbu(X)$, $\delta_y(f):=\tilde f(y)$. Indeed, define the set $V\subset Y$ consisting in all points $y_0$ for which there is an open connected subset $Z$ such that $y_0$ belongs to $Z$ and every point in $Z$ induce a continuous evaluation. Clearly $V$ is an open nonempty set ($e(X)$ is contained in $V$). Moreover, if $(y_n)\subset V$ and $y_n\to y_0$, then since $\delta_{y_n}$ is continuous, the seminorm defined by $f\mapsto\sup_{n\in\mathbb N}|\delta_{y_n}(f)|$ is continuous (the sets $\{f\in\hbu(X):\, \sup_{n\in\mathbb N}|\delta_{y_n}(f)|\le r\}$ are closed, absolutely convex and absorvent and $\hbu(X)$ is a Fr\'echet space, thus they have nonempty interior). Therefore there are an open $X$-bounded  set $A$ and a constant $c>0$ such that $|\tilde f(y_0)|\le\sup_{n\in\mathbb N}|\delta_{y_n}(f)|\le cp_A(f)$ for every $f\in\hbu(X)$. Thus $V$ is closed and since $Y$ is connected, we have $V=Y$.

Take a point $y\in Y$ and let $\gamma:[0,1]\to Y$ be a curve such that $\gamma(0)\in e(X)$ and $\gamma(1)=y$.
By compactness, it follows that there is some open $X$-bounded  set $A$ such that $\delta_{\gamma(t)}\prec A$ for every $t\in[0,1]$, that is, for each $t$, there  exists $c>0$ such that $|\tilde g(\gamma(t))|\le cp_A(g)$ for every $g\in\hbu(X)$.

 Let $I$ denote the set of all $t_0\in[0,1]$ such that there exists a connected open subset $Z\subset Y$
 which contains $e(X)$ and satisfies that $\gamma(t)\in Z$ for every $t\le t_0$ and that $\tilde g|_{Z}$ belongs to $\hbu(Z)$ for every $g\in\hbu(X)$.
 To prove the proposition it is enough to show that $I=[0,1]$. Since $I$ is clearly open, it suffices to prove that if $[0,t_0)\subset I$ then $t_0$ belongs to $I$. Take $t_1<t_0$ such that $\gamma((t_1,t_0])$ is contained in some ball $B$ of center $\gamma(t_1)$ and radius $r<d_X(A)$ in $Y$. Let $Z$ be the subdomain which exists for $t_1$ in the definition of $I$. Note that $e:(X,q)\to(Z,p|_Z)$ is an $\hbu$-extension.

Let $\varphi_k\in \u_k(E)'$ and $f\in\hbu(X)$. By Lemma \ref{lema1 cartan-thullen}, $\varphi_k\circ\frac{d^kf}{k!}\in \hbu(X)$,
and since the extension of $f$ to $Z$, $\tilde f|_{Z}$, belongs to $\hbu(Z)$, we also have that $\varphi_k\circ\frac{d^k\tilde f|_{Z}}{k!}\in\hbu(Z)$.

Moreover, $\varphi_k\circ\frac{d^k\tilde f|_{Z}}{k!}$ is an extension of $\varphi_k\circ\frac{d^kf}{k!}$ to $Z$. Indeed if $x\in X$ and $(V_x,q)$ is a chart at $x$ in $X$ such that $(V_{e(x)},p)=(e(V_{x}),p)$ is a chart at $e(x)$ in $Z$, then
\begin{eqnarray*}
\varphi_k\circ\frac{d^k\tilde f|_{Z}}{k!}(e(x)) & = & \varphi_k\circ\frac{d^k[\tilde f\circ(p|_{V_{e(x)}})^{-1}]}{k!}\big(p(e(x))\big) \overset{(*)}{=}\varphi_k\circ\frac{d^k[f\circ(q|_{V_{x}})^{-1}]}{k!}\big(q(x)\big) \\ & = & \varphi_k\circ\frac{d^kf}{k!}\big(x\big),
\end{eqnarray*}
where $(*)$ is true because $\tilde f\circ(p|_{V_{e(x)}})^{-1}=f\circ(q|_{V_{x}})^{-1}$ since $e$ is an $\hbu$-extension.

 Since $\Big(\varphi_k\circ\frac{d^kf}{k!}\Big)^\sim$ is also an extension of $\varphi_k\circ\frac{d^kf}{k!}$ to $Z$,
 we must have that
\begin{equation}
\Big(\varphi_k\circ\frac{d^kf}{k!}\Big)^\sim
=\varphi_k\circ\frac{d^k\tilde f|_Z}{k!}.
\end{equation}
Therefore for $r<\alpha<d_X(A)$,
\begin{eqnarray*}
\sum_kr^k\big\|\frac{d^k\tilde f|_Z}{k!}(\gamma(t_1))\big\|_{\u_k(E)} & = & \sum_kr^k\sup_{\varphi_k\in B_{\u_k(E)'}}\big|\varphi_k\Big(\frac{d^k\tilde f|_Z}{k!}(\gamma(t_1))\Big)\big|\\ & =& \sum_kr^k\sup_{\varphi_k\in B_{\u_k(E)'}}\big|\Big(\varphi_k\circ\frac{d^kf}{k!}\Big)^\sim(\gamma(t_1))\big| \\
 & \le & \sum_kr^k\sup_{\varphi_k\in B_{\u_k(E)'}}cp_A(\varphi_k\circ\frac{d^kf}{k!}) \\
& \le & cC\frac{\alpha}{\alpha-r} p_{\tilde A}(f) <\infty,
\end{eqnarray*}
where $\tilde A$ and $C$ are, respectively, the $X$-bounded set and the constant given by Lemma \ref{lema1 cartan-thullen}.
Thus $\tilde f$ belongs to $\hbu(Z\cup B)$. Since this holds for every $f\in\hbu(X)$ and $[0,t_0]$ is contained in $Z\cup B$, we conclude that $t_0$ is in $I$.
\end{proof}
As a corollary we have the following partial answer to a question of Hirschowitz (see comments after Definition \ref{def hbu envelope}).
\begin{corollary}\label{extensiones al envelope}
If $\u$ be a weakly differentiable holomorphy type with constants as in (\ref{constantes}), then the extensions to the $\hbu$-envelope of holomorphy are of type $\u$.
\end{corollary}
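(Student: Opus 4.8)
The plan is to read this corollary off directly from Proposition \ref{hbu-hu=hbu}, in which all the analytic work has already been done (that proposition in turn rests on Lemma \ref{lema1 cartan-thullen} and on the weak differentiability estimates). Recall from Definition \ref{def hbu envelope}, together with Hirschowitz's existence theorem \cite{Hir72}, that the $\hbu$-envelope of a connected Riemann domain $(X,q)$ is a Riemann domain $\mathcal E_{b\u}(X)$, which we may take to be connected, equipped with an $\hbu$-extension morphism $\tau\colon X\to\mathcal E_{b\u}(X)$; by the universal property, every $f\in\hbu(X)$ then admits a unique holomorphic extension $\tilde f$ to $\mathcal E_{b\u}(X)$.

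The only thing to verify is that the hypotheses of Proposition \ref{hbu-hu=hbu} are met by the morphism $e=\tau$ with $(Y,p)=\mathcal E_{b\u}(X)$: the domain $X$ (hence also $\mathcal E_{b\u}(X)$) is connected, $\u$ is by assumption a weakly differentiable holomorphy type with constants as in (\ref{constantes}), and $\tau$ is an $\hbu$-extension by the very definition of the $\hbu$-envelope. Once this is checked, the proposition applies and gives, for every $f\in\hbu(X)$ and every $y\in\mathcal E_{b\u}(X)$, a connected open set $Z$ with $\tau(X)\cup\{y\}\subset Z\subset\mathcal E_{b\u}(X)$ on which $\tilde f\in\hbu(Z)$ --- and in particular, again by the conclusion of Proposition \ref{hbu-hu=hbu}, $\tilde f$ has positive $\u$-radius of convergence at $y$, so $\tilde f$ is of type $\u$ on $\mathcal E_{b\u}(X)$, which is exactly the assertion.

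There is no genuine obstacle here: the substance is entirely contained in Proposition \ref{hbu-hu=hbu}, and what remains is bookkeeping --- identifying the structural morphism of the $\hbu$-envelope with an abstract $\hbu$-extension, and the harmless reduction to a connected envelope (if $\mathcal E_{b\u}(X)$ were not connected one replaces it by the connected component of $\tau(X)$, which is again an $\hbu$-extension, since $\tau$ has open image and the extensions are pinned down on that component by the identity principle). Alternatively, one may bypass Hirschowitz's abstract construction altogether and take for $\mathcal E_{b\u}(X)$ the concrete model established earlier, namely the connected component of $\pi^{-1}(E)\subset\mbu(X)$ meeting $\delta(X)$, with $\delta\colon X\to\mathcal E_{b\u}(X)$ as the $\hbu$-extension; the argument is then verbatim the same.
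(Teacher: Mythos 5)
Your proposal is correct and coincides with the paper's intended argument: the corollary is stated as an immediate consequence of Proposition \ref{hbu-hu=hbu}, applied to the structural morphism $\tau\colon X\to\mathcal E_{b\u}(X)$, which is an $\hbu$-extension by definition of the envelope. The additional remarks on connectedness and on the concrete model inside $\mbu(X)$ are harmless but not needed.
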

The following result can be proved as Corollary \ref{mbu dom holo}, but using the above corollary instead of Proposition \ref{gelfand holo}.
\begin{corollary}
 Let $\u$ be a multiplicative and weakly differentiable holomorphy type with constants as in (\ref{constantes}). Then the $\hbu$-envelope of holomorphy is an $\hu$-domain of holomorphy, that is, any $\hu$-extension is an isomorphism.
\end{corollary}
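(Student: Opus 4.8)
The plan is to transcribe the proof of Corollary \ref{mbu dom holo}, replacing the use of Proposition \ref{gelfand holo} by Corollary \ref{extensiones al envelope}. Write $Y=\mathcal E_{b\u}(X)$; by the characterization of the $\hbu$-envelope obtained above we may take $Y$ to be the connected component of $\pi^{-1}(E)\subset\mbu(X)$ that meets $\delta(X)$, so the points of $Y$ are continuous characters of $\hbu(X)$, each one determined by the values $f\mapsto\tilde f(y)$ of the Gelfand transforms. Applying the Cartan--Thullen--Mujica criterion \cite[Theorem 52.6]{Muj86} in the form appropriate to the family $\hu$, it suffices to prove two things: (a) that $Y$ is $\hu(Y)$-separated; and (b) that whenever $\{y_j\}\subset Y$ satisfies $d_Y(y_j)\to0$ there exists $F\in\hu(Y)$ with $\sup_j|F(y_j)|=\infty$.

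For (a), note that by Corollary \ref{extensiones al envelope} --- which is where weak differentiability is used --- the Gelfand transform $\tilde f$ of every $f\in\hbu(X)$ is of type $\u$ on $Y$, i.e.\ lies in $\hu(Y)$. Since two distinct points of $Y$ are distinct characters of $\hbu(X)$, they are separated by $\tilde f$ for some $f\in\hbu(X)$, so $Y$ is $\hu(Y)$-separated.

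For (b), I would argue by contradiction, exactly as in Corollary \ref{mbu dom holo}. If $\{y_j\}\subset Y$ satisfies $\sup_j|F(y_j)|<\infty$ for every $F\in\hu(Y)$, then in particular $\sup_j|\tilde f(y_j)|=\sup_j|\delta_{y_j}(f)|<\infty$ for every $f\in\hbu(X)$, so $\tau_0(f):=\sup_j|\delta_{y_j}(f)|$ is a seminorm on $\hbu(X)$ whose unit ball is absolutely convex, absorbent, and closed (it is the intersection of the closed sets $\{f:|\delta_{y_j}(f)|\le1\}$, each $\delta_{y_j}$ being continuous). As $\hbu(X)$ is Fr\'echet, and hence barreled (Proposition \ref{hbudeX frechet}), $\tau_0$ is continuous, so there are an open $X$-bounded set $D$ and $c>0$ with $\tau_0\le c\,p_D$; that is, $\delta_{y_j}\prec D$ for all $j$. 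By Lemma \ref{hbudex Riemann lema1}, for each $j$ and each $w\in E$ with $\|w\|<d_X(D)$ the character $\delta_{y_j}^w$ belongs to $\mbu(X)$ and $\pi(\delta_{y_j}^w)=\pi(y_j)+w\in E$; hence the set $\{\delta_{y_j}^w:\ w\in E,\ \|w\|<d_X(D)\}$ is connected, is homeomorphic via $\pi$ to a ball of $E$, and contains $y_j\in Y$, so it is a ball of radius $d_X(D)$ around $y_j$ lying inside $Y$. Therefore $d_Y(y_j)\ge d_X(D)>0$ for all $j$, contradicting $d_Y(y_j)\to0$.

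As in Corollary \ref{mbu dom holo}, the estimates are not the obstacle --- they are already contained in Lemma \ref{hbudex Riemann lema1} and Corollary \ref{coro lema1}. The two points that need care are, first, setting up the $\hu$-version of the criterion \cite[Theorem 52.6]{Muj86}; one can bypass this by observing that, by Corollary \ref{extensiones al envelope}, the $\hbu$-$\hu$-envelope of Moraes \cite{Mor81} coincides with $\mathcal E_{b\u}(X)$, that $\tau\circ\delta$ is an $\hbu$-$\hu$-extension of $X$ whenever $\tau$ is an $\hu$-extension of $\mathcal E_{b\u}(X)$, so that $\tau$ factors through $\mathcal E_{b\u}(X)$, and then concluding that $\tau$ is an isomorphism by the identity theorem for local homeomorphisms over a connected base. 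Second, one must check --- as done above --- that the ball produced by Lemma \ref{hbudex Riemann lema1} stays inside the connected component $Y$, and not merely inside $\mbu(X)$; this is precisely what gives a lower bound for $d_Y(y_j)$ rather than only for $d_{\mbu(X)}(\delta_{y_j})$. Neither point is deep, but both are required for the transcription of the proof of Corollary \ref{mbu dom holo} to be complete.
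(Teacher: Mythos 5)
Your proof is correct and follows exactly the route the paper intends: the paper's entire proof is the one-line remark that the result ``can be proved as Corollary \ref{mbu dom holo}, but using the above corollary instead of Proposition \ref{gelfand holo}.'' You carry out that transcription faithfully, and the two subtleties you flag (the $\hu$-version of the Cartan--Thullen--Mujica criterion, which your factorization through the Moraes envelope neatly bypasses, and the check that the balls from Lemma \ref{hbudex Riemann lema1} stay inside the connected component $Y$) are exactly the details the paper leaves implicit.
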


\medskip

The Cartan-Thullen theorem characterizes domains of holomorphy in $\mathbb C^n$ in terms of holomorphic convexity. It was extended for bounded type holomorphic functions on Banach spaces by Dineen \cite{Din71(Cartan-Thullen)}.
Shortly after, Cartan-Thullen type theorems were proved for very general classes of spaces of holomorphic functions on infinite dimensional spaces by Coeur\'e \cite{Coe74}, Schottenloher \cite{Sch72,Sch74} and Matos \cite{Mat72tau,Mat74}. 
Despite the generality of this theorems\footnote{The natural Fr\'echet spaces considered by Coeur\'e include, by Corollary \ref{coro lema1}, the spaces $\hbu$ and so do the regular classes studied by Schottenloher when the holomorphy type is multiplicative.}, the fact that any holomorphically convex domain is a domain of holomorphy was only proved for spaces of analytic functions which one may associate to the current holomorphy type (spaces of analytic functions which are bounded on certain families of subsets, with the topology of uniform convergence on these subsets). We guess that this may be due to the fact that the concept of holomorphic convexity considered there make use of the seminorms associated to the current type. We propose instead a concept of $\hbu$-convexity which uses the seminorms associated to the corresponding holomorphy type, and show then that a Riemann domain is an $\hbu$-domain of holomorphy if and only if it is $\hbu$-convex.
\begin{definition}\rm
For each open $X$-bounded  set $A$, we define its \textbf{$\hbu(X)$-convex hull} as
$$
\hat A_{\hbu(X)}:=\{x\in X:\;\textrm{ there exists }c>0\textrm{ such that }|f(x)|\le cp_A(f)\textrm{ for every }f\in\hbu(X)\}.
  $$
\end{definition}
\begin{remark}\rm
If the seminorms $p_A$ are submultiplicative (as in the case of $H_b$) then the constant in above definition may be taken $c=1$.
\end{remark}
\begin{definition}\rm
 We say that a Riemann domain ($X,q$) is \textbf{$\hbu$-convex} if for each open $X$-bounded  set $A$, its $\hbu(X)$-convex hull $\hat A_{\hbu(X)}$ is $X$-bounded.
\end{definition}
\begin{definition}\rm
 We say that a Riemann domain ($X,q$) is an \textbf{$\hbu$-domain of holomorphy} (\textbf{$\hbu$-$\hu$-domain of holomorphy}) if each $\hbu$-extension ($\hbu$-$\hu$-extension) morphism is an isomorphism.
\end{definition}
We are now ready to prove the Cartan-Thullen theorem for $\hbu$.
\begin{theorem}
Let $\u$ be a holomorphy type with constants as in (\ref{constantes}). Let  $(X,q)$ be a Riemann domain spread over a Banach space $E$. Consider the following conditions.
\begin{itemize}
 \item[i)] $X$ is a $\hbu$-convex and $d_X(\hat A_{\hbu(X)})=d_X(A)$ for each open $X$-bounded set $A$.

\item[ii)] $X$ is a $\hbu$-convex.

\item[iii)] For each sequence $(x_n)\subset X$ such that $d_X(x_n)\to 0$, there exist a function $f\in\hbu(X)$ such that $\sup_n|f(x_n)|=\infty$.

\item[iv)] $X$ is an $\hbu$-domain of holomorphy.

\item[v)] For each open subset $A$ of $X$ which is not $X$-bounded there exist a function $f\in\hbu(X)$ such that $p_A(f):=\sup\{p_s^x(f):\, B_s(x)\textrm{ contained in }A \textrm{ and } X\textrm{-bounded}\}=\infty$.

\item[vi)] $X$ is an $\hbu$-$\hu$-domain of holomorphy.

\end{itemize}
Then $\mathrm{i)}\Rightarrow \mathrm{ii)}\Leftrightarrow \mathrm{iii)}\Leftrightarrow \mathrm{iv)}\Rightarrow\mathrm{v)}\Rightarrow\mathrm{vi)}$.

Moreover, if $\u$ is also weakly differentiable with constants as in (\ref{constantes}), then all the above conditions are equivalent.
\end{theorem}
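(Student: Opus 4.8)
The plan is to run the classical Cartan--Thullen cycle, supplying at each step the Fr\'echet structure of $\hbu(X)$ (Proposition \ref{hbudeX frechet}), the existence of Hirschowitz's $\hbu$-envelope, and the estimates of Corollary \ref{coro lema1}, Lemma \ref{lema1 cartan-thullen} and Proposition \ref{hbu-hu=hbu}. The implication $\mathrm{i)}\Rightarrow\mathrm{ii)}$ is immediate. For $\mathrm{ii)}\Rightarrow\mathrm{iii)}$ I argue as in the proof of Corollary \ref{mbu dom holo}: if $(x_n)\subset X$ satisfies $d_X(x_n)\to0$ but $\sup_n|f(x_n)|<\infty$ for every $f\in\hbu(X)$, then $\tau(f):=\sup_n|f(x_n)|$ is a lower semicontinuous, hence (barrelledness) continuous, seminorm on $\hbu(X)$, so $\tau\le c\,p_A$ for some open $X$-bounded $A$; then every $x_n$ lies in $\hat A_{\hbu(X)}$ with the common constant $c$, so $\hat A_{\hbu(X)}$ is not $X$-bounded, against $\mathrm{ii)}$. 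Conversely, for $\mathrm{iii)}\Rightarrow\mathrm{ii)}$ one first checks, applying the definition of $\hat A_{\hbu(X)}$ to the functions $(x'\circ q)^k$ (whose $p_A$-seminorm is $\le R_A^k$, where $R_A:=\sup\{\|q(y)\|+s:B_s(y)\subset A\}<\infty$, by the ideal axioms) and letting $k\to\infty$, that $\|q(x)\|\le R_A$ for all $x\in\hat A_{\hbu(X)}$; hence a failure of $X$-boundedness of $\hat A_{\hbu(X)}$ forces a sequence $x_n\in\hat A_{\hbu(X)}$ with $d_X(x_n)\to0$, to which $\mathrm{iii)}$ associates $f\in\hbu(X)$ with $\sup_n|f(x_n)|=\infty$, contradicting $x_n\in\hat A_{\hbu(X)}$.

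For $\mathrm{iii)}\Leftrightarrow\mathrm{iv)}$ I use the usual dichotomy. If an $\hbu$-extension $e:X\to Z$ is not an isomorphism, a boundary point $z_0$ of $e(X)$ yields a sequence $x_n\in X$ with $e(x_n)\to z_0$ and $d_X(x_n)\to0$, along which every $\tilde f$ stays bounded, contradicting $\mathrm{iii)}$ (injectivity of $e$ following from $\hbu(X)$ separating points); this gives $\mathrm{iii)}\Rightarrow\mathrm{iv)}$. For $\mathrm{iv)}\Rightarrow\mathrm{iii)}$, if $\mathrm{iii)}$ fails the offending sequence lies, by the barrelledness argument above, in some $\hat A_{\hbu(X)}$, and then, just as in Lemma \ref{hbudex Riemann lema1} (using Corollary \ref{coro lema1}), the evaluations $\delta_{x_n}$ extend to characters $\delta_{x_n}^{\,w}$ for $\|w\|<d_X(A)$; these glue an $\hbu$-extension of $X$ onto a strictly larger Riemann domain, so $X$ is a proper subdomain of its $\hbu$-envelope and $\mathrm{iv)}$ fails. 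Next, $\mathrm{iv)}\Rightarrow\mathrm{v)}$: given an open, non-$X$-bounded $A$, if $q(A)$ is unbounded it is not weakly bounded, so some $x'\in E'$ has $\sup_A|x'\circ q|=\infty$ and $f=x'\circ q\in\hbu(X)$ satisfies $p_A(f)=\infty$; if $q(A)$ is bounded then $d_X(A)=0$, and $\mathrm{iii)}$ applied to a sequence $x_n\in A$ with $d_X(x_n)\to0$ gives $f$ with $p_A(f)\ge\sup_n|f(x_n)|=\infty$. Finally $\mathrm{v)}\Rightarrow\mathrm{vi)}$: if an $\hbu$-$\hu$-extension $e:X\to Y$ is not an isomorphism, take a boundary point $y_0$ of $e(X)$; each extended $\tilde f\in\hu(Y)$ has positive $\u$-radius at $y_0$, so a Baire-category uniformization of these radii produces a fixed $\rho>0$ with $\tilde f|_{B_Y(y_0,\rho)}\in\hbu(B_Y(y_0,\rho))$ for all $f$, whence the component $A$ of $e^{-1}(B_Y(y_0,\rho/2))$ reaching $y_0$ is open, not $X$-bounded, but satisfies $p_A(f)\le p_{B_Y(y_0,\rho/2)}(\tilde f)<\infty$ for every $f$ (since $\tfrac{d^kf(x)}{k!}=\tfrac{d^k\tilde f(e(x))}{k!}$ along $e$), contradicting $\mathrm{v)}$.

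For the last assertion, assume $\u$ is also weakly differentiable. Then Proposition \ref{hbu-hu=hbu} says every $\hbu$-extension is already an $\hbu$-$\hu$-extension, so the notions of $\hbu$-domain and of $\hbu$-$\hu$-domain of holomorphy coincide, i.e. $\mathrm{iv)}\Leftrightarrow\mathrm{vi)}$; with the chain above this makes $\mathrm{ii)}$--$\mathrm{vi)}$ all equivalent. To incorporate $\mathrm{i)}$ it suffices to prove $\mathrm{iv)}\Rightarrow\mathrm{i)}$, that is $d_X(\hat A_{\hbu(X)})\ge d_X(A)$ (the reverse inequality being trivial). For $x\in\hat A_{\hbu(X)}$, $f\in\hbu(X)$ and $\varphi\in\u_k(E)'$ with $\|\varphi\|\le1$, Lemma \ref{lema1 cartan-thullen} gives $\varphi\circ\tfrac{d^kf}{k!}\in\hbu(X)$ with $\alpha^k p_A(\varphi\circ\tfrac{d^kf}{k!})\le C\,p_{\tilde A}(f)$ for any $\alpha<d_X(A)$; taking the supremum over $\varphi$ yields $\|\tfrac{d^kf(x)}{k!}\|_{\u_k(E)}\le c_x C\,\alpha^{-k}p_{\tilde A}(f)$, so $f$ has $\u$-radius of convergence $\ge d_X(A)$ at $x$. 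If $d_X(x)<d_X(A)$, every $f\in\hbu(X)$ would then extend past $x$ onto a Riemann domain strictly larger than $X$, contradicting $\mathrm{iv)}$; hence $d_X(x)\ge d_X(A)$.

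The main obstacle is $\mathrm{v)}\Rightarrow\mathrm{vi)}$, which is also where weak differentiability becomes essential: for a general holomorphy type the extensions need not be of $\u$-bounded type, so $p_A(f)$ on $X$ cannot be compared directly with a single finite seminorm upstairs, and one must first uniformize the radius of $\u$-convergence of the $\tilde f$ at a boundary point before the comparison closes; this is precisely the difficulty that evaporates, through Proposition \ref{hbu-hu=hbu}, once $\u$ is weakly differentiable, at which point $\hbu$-extensions automatically lie in $\hu$ and the two notions of domain of holomorphy coincide. A secondary point is keeping track of the constant $c$ in the definition of $\hat A_{\hbu(X)}$ throughout the barrelledness arguments, which is handled as above by passing to the powers $(x'\circ q)^k$ so as to bound $q$ on $\hat A_{\hbu(X)}$ independently of $c$.
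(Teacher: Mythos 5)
Your overall architecture is close to the paper's, and several steps are sound: the barrelledness argument for ii)$\Leftrightarrow$iii), the explicit bound on $q(\hat A_{\hbu(X)})$ via the powers $(x'\circ q)^k$ (which the paper leaves implicit), and the use of Lemma \ref{lema1 cartan-thullen} to recover i) in the weakly differentiable case. But two steps have genuine gaps.

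First, your iv)$\Rightarrow$iii) rests on Lemma \ref{hbudex Riemann lema1} and on gluing the characters $\delta_{x_n}^{\,w}$ into a larger Riemann domain. That lemma, and the whole spectrum/envelope machinery behind ``$X$ is a proper subdomain of its $\hbu$-envelope'', require $\u$ to be \emph{multiplicative} (and, for the Riemann domain structure on $\mbu(X)$, also regular and $AB$-closed). The theorem assumes only that $\u$ is a holomorphy type with constants as in (\ref{constantes}), so there is no algebra, no characters and no spectrum available. The paper sidesteps this by quoting \cite[Theorem 4.9]{Coe74} for iii)$\Leftrightarrow$iv), the verification that $\hbu(X)$ fits Coeur\'e's framework using only Corollary \ref{coro lema1}. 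Your argument can be repaired in the same spirit without characters: for $x\in\hat A_{\hbu(X)}$, Corollary \ref{coro lema1}$(i)$ gives $|\tfrac{d^kf(x)}{k!}(w)|\le c\,C\alpha^{-k}\|w\|^kp_{\tilde A}(f)$, so every $f$ extends holomorphically to $B_{d_X(A)}(q(x))$, and one adjoins this ball to $X$ as in the paper's proof of vi)$\Rightarrow$i).

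Second, and more seriously, your v)$\Rightarrow$vi) hinges on a Baire-category uniformization producing one $\rho>0$ with $\tilde f|_{B_Y(y_0,\rho)}\in\hbu(B_Y(y_0,\rho))$ for \emph{all} $f$. For Baire you would need the sets $\{f\in\hbu(X):\,p^{y_0}_{1/m}(\tilde f)\le m\}$ to be closed, and this is neither established nor plausible: convergence $f_j\to f$ in $\hbu(X)$ only yields pointwise (or compact-open) convergence of the extensions, and the norms $\|\cdot\|_{\u_k(E)}$ are in general not lower semicontinuous under such convergence of the differentials. The paper avoids any uniformization: it fixes the non-$X$-bounded set $A=\bigcup_nB_n$ (balls of radius $d_X(x_n)/2$ around a sequence with $\tau(x_n)\to y$) \emph{before} invoking v), obtains from v) a single $f$ with $p_A(f)=\infty$, observes that $p_{A_k}(f)=\infty$ for every tail $A_k=\bigcup_{n\ge k}B_n$, and then contradicts this using the radius of $\u$-convergence of $\tilde f$ at $y$ for that one function, since $A_k\subset B_r(y)$ for $k$ large. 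You should replace the Baire step by this tail argument, which only needs the radius of the single function produced by v).
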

\begin{proof}
The implications i) $\Rightarrow$ ii),  iii) $\Rightarrow$ v) and  iv) $\Rightarrow$ vi) are clear. The equivalence iii) and iv) is contained in \cite[Theorem 4.9]{Coe74}. 

Let us prove equivalence between ii) and iii). Suppose that $d_X(x_n)\to0$ and that $\tau(f)=\sup_n|f(x_n)|<\infty$ for every $f\in\hbu(X)$. Then the set $V=\{f\in\hbu(X):\, \tau(f)\le 1\}$ is absolutely convex and absorvent. Moreover $V$ is closed since it is the intersection of the sets $\{f\in\hbu(X):\, |f(x_n)|\le 1\}$ which are closed because evaluations at $x_n$ are continuous in $\hbu(X)$. 
Since $\hbu(X)$ is a barreled space, $V$ is a neighbourhood of 0 and thus $\tau$ is a continuous seminorm. Therefore, there are an $X$-bounded set $C$ and a constant $c>0$ such that $\tau(f)\le cp_C(f)$ for every $f\in\hbu(X)$. That is, $(x_n)\subset\hat C_{\hbu(X)}$, and thus $X$ is not $\hbu$-convex. Conversely, if $A$ is an $X$-bounded set such that $\hat A_{\hbu(X)}$ is not $X$-bounded, then there is a sequence $(x_n)$ in $\hat A_{\hbu(X)}$ such that $d_X(x_n)\to0$. This sequence satisfies that $\sup_n|f(x_n)|<\infty$.

%
%

We prove now that v) implies vi). Let $\tau:X\to Y$ be a morphism which is an $\hbu$-$\hu$-extension but is not surjective and take $y$ in the border of $X$. Let $(x_n)$ be a sequence contained in $X$ converging to $y$ and let $B_n$ be the ball of center $x_n$ and radius $\frac{d_X(x_n)}{2}$. Since $A=\cup_nB_n$ is not $X$-bounded, there is some $f\in\hbu(X)$ such that $p_A(f)=\infty$. Let $A_k=\cup_{n\ge k}B_n$, then clearly $p_{A_k}(f)=\infty$ for every $k\ge1$.
Since $f$ extends to $\tilde f\in\hu(Y)$, there is some $r>0$ such that $p_{B_r(y)}(\tilde f)<\infty$. Moreover, if $k$ is large enough then $A_k\subset B_r(y)$. Thus, for $k$ large enough, we have that $p_{A_k}(f)=p_{A_k}(\tilde f)\le p_{B_r(y)}(\tilde f)<\infty$, which is a contradiction.

It remains to prove that vi) implies i) when $\u$ is weakly differentiable.\newline
\textit{Claim:} If $\u$ be a weakly differentiable holomorphy type with constants as in (\ref{constantes}), $A$ an open $X$-bounded set and $y\in\hat A_{\hbu(X)}$ and $f\in\hbu(X)$, then $f\circ(q|_{B_y})^{-1}$ extends to a function $\tilde f\in\hbu(B_{d_X(A)}(q(y)))$.\newline
%
\textit{Proof of the claim.} Let $\alpha<\alpha_0<d_X(A)$. Then by Lemma \ref{lema1 cartan-thullen}, there exists a constant $C$ (independent of $k$) such that $\alpha_0^k\sup_{\varphi_k\in B_{\u_k(E)'}}p_A(\varphi_k\circ\frac{d^kf}{k!}) \le C p_{\tilde A}(f)$. Thus,
\begin{eqnarray*}
\sum_k\alpha^k\big\|\frac{d^kf}{k!}(y)\big\|_{\u_k(E)} & = & \sum_k\alpha^k\sup_{\varphi_k\in B_{\u_k(E)'}}\big|\varphi_k(\frac{d^kf}{k!}(y))\big|\le \sum_k\alpha^k\sup_{\varphi_k\in B_{\u_k(E)'}}cp_A(\varphi_k\circ\frac{d^kf}{k!}) \\
& \le & c \sum_k\big(\frac{\alpha}{\alpha_0}\big)^kCp_{\tilde A}(f) = cC\frac{\alpha_0}{\alpha_0-\alpha} p_{\tilde A}(f) <\infty.
\end{eqnarray*}
Since this is true for every $\alpha<d_X(A)$, we have that the Taylor series of $f\circ(q|_{B_y})^{-1}$ at $q(y)$ converges on $B_{d_X(A)}(q(y))$ and that $f\circ(q|_{B_y})^{-1}$ belongs to $\hbu(B_{d_X(A)}(q(y)))$, 
 and the claim is proved.

Suppose now that  $d_X(\hat A_{\hbu(X)})<d_X(A)$. Take $y\in\hat A_{\hbu(X)}$ such that $d_X(y)<d_X(A)$. We define a Riemann domain $\tilde X$ adjoining to $X$ the ball $B_{d_X(A)}(q(y))$ as follows. First define a Riemann domain $(X_0,q_0)$ as the disjoint union $X\cup B_{d_X(A)}(q(y))$, and $q_0(x_0)=q(x_0)$ if $x_0\in X$; $q_0(x_0)=x_0$ if $x_0\in B_{d_X(A)}(q(y))$. Then define the following equivalence relation $\sim$ on $X_0$: each point is related with itself and two points $x_0\in B_{d_X(A)}(q(y))$, $x_1\in X$ are related if and only if $q(x_1)=x_0$ and $x_1$ may be joined to $y$ by a curve contained in $q^{-1}(B_{d_X(A)}(q(y)))$. Let $\tilde X$ be the Riemann domain $X_0/_\sim$. By the claim the inclusion $X\hookrightarrow\tilde X$ is an $\hbu$-$\hu$-extension morphism, and it is not an isomorphism.
\end{proof}
\begin{remark}\rm
By \cite[Theorem 4.9]{Coe74} we also have that if $\u$ is a holomorphy type with constants as in (\ref{constantes}) and $(X,q)$ is a domain over a separable Banach space $E$, then $X$ is an $\hbu$-domain of holomorphy if and only if $X$ is the domain of existence of a function $f\in\hbu(X)$.
\end{remark}


\section*{Acknowledgements} 
It is a pleasure to thank Daniel Carando and Ver\'onica Dimant for many helpful conversations. Serveral parts of this article have been considerably improved by their comments. I would also like to thank Pablo Turco for pointing me out some useful results in \cite{Nac69}.


\end{document}